\documentclass[12pt]{amsart}

\usepackage[top=100pt,bottom=60pt,left=96pt,right=92pt]{geometry}

\usepackage{hyperref}
\usepackage{amsmath,amsthm,amssymb,amsfonts,enumerate,xcolor,layout,tikz,subcaption,enumitem,fullpage, verbatim, colordvi}
\usetikzlibrary{calc}   % gives us abs(), arithmetic in coordinates
\usepackage{stmaryrd}
\usepackage{graphicx}
\usepackage{subcaption}
\newtheorem{theorem}{Theorem}[section]
\newtheorem*{theorem*}{Theorem}
\newtheorem{proposition}[theorem]{Proposition}
\newtheorem{corollary}[theorem]{Corollary}
\newtheorem{lemma}[theorem]{Lemma}

\theoremstyle{definition}

\newtheorem{remark}[theorem]{Remark}

\theoremstyle{plain}

\definecolor{linkblue}{rgb}{0,0,.6}
\definecolor{citered}{rgb}{.7,0,0}
\hypersetup{colorlinks =true, linkcolor=linkblue, citecolor = citered, urlcolor=linkblue}

\def\C{{\mathbb C}}
\def\N{{\mathbb N}}

\def\R{{\mathbb R}}

\def\Z{{\mathbb Z}}

\def\GL{\textrm{GL}}
\newcommand{\medcup}{\textstyle{\bigcup}}
\newcommand{\medcap}{\textstyle{\bigcap}}

\newcommand{\De}{\Delta}
\newcommand{\om}{\omega}
\newcommand{\fh}{\mathfrak{h}}
\newcommand{\ft}{\mathfrak{t}}

\newcommand{\relint}{\operatorname{relint}}

\newcommand{\short}{\textrm{short}}

\newcommand{\Ext}{\operatorname{Ext}}

\newcommand{\purple}[1]{\textbf{\purple{#1}}}

\title{Skeletons and Toric Extensions of Maximally Short Complexity One Spaces}
\author{Yichen Liu}
\date{}

\begin{document}
\begin{abstract}
Complexity one $T$-spaces are Hamiltonian $T$-spaces $(M,\om,\Phi)$ such that $\frac{1}{2}\dim M -\dim T=1$. The skeleton of a complexity one $T$-space is an important invariant in the classification and encodes the information about non-generic orbits. In this paper, we prove that the moment image of the skeleton of a compact, connected maximally short complexity one $T$-space, which is in fact a GKM space, is connected. The proof relies on the well-known fact that each connected component of regular values of a proper moment map is a convex locally polyhedral set. We also gave an elementary proof of that fact along the way. Then we use the connectedness result to estimate the number of symplectic toric $(T \times S^1)$-manifolds whose underlying complexity one $T$-space is the same as the given maximally short complexity one $T$-space.
\end{abstract}
\maketitle

\section{Introduction}
Let $T$ be a torus with Lie algebra $\mathfrak{t}$ and dual space $\ft^*$. Recall that we say $T$ acts on a symplectic manifold $(M,\omega)$ \textit{in a Hamiltonian fashion} if there exists a $T$-invariant map $\Phi: M \to \ft^*$ such that $d \langle \Phi, X \rangle = \omega(X^\#, \cdot)$, where $X^\#$ is the fundamental vector field on $M$ corresponding to $X$. We call the tuple $(M,\om,\Phi)$ a {\bf Hamiltonian $T$-space}. If not otherwise stated, we always assume the manifold is connected and the action is effective in this paper. Our first main result is the convexity of the regular value chambers. This result is well-known in the compact case (cf.~\cite[Example 3.7]{GLS}). In this paper, we provide an elementary proof by only assuming the moment map to be proper.

\begin{theorem}\label{thm:convexity-of-chambers}
    Let $(M,\omega,\Phi)$ be a connected Hamiltonian $T$-space with proper moment map $\Phi: M \to \mathfrak{t}^*$. Let $\mathcal{R}$ denote the set of regular values of $\Phi$ inside $\Phi(M)$. Then the closure  $\overline{\mathcal{C}}$ of each connected component $\mathcal{C}$ of $\mathcal{R}$ is a convex rational locally polyhedral set, and $\Phi(M)$ is the union of all such $\overline{\mathcal C}$.
\end{theorem}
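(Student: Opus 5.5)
We will prove Theorem~\ref{thm:convexity-of-chambers} using only the local normal form theorem, properness, and a local-to-global convexity argument.

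\textbf{Step 1: the critical values form a locally finite union of affine hyperplane pieces.} A point $x \in M$ is critical for $\Phi$ exactly when its stabilizer $T_x$ has positive dimension. By the local normal form (Marle--Guillemin--Sternberg), near an orbit with stabilizer $H$ the moment map has the form $\Phi = \Phi(x) + \mu_H + (\text{linear term})$. Here $\mu_H$ is the quadratic moment map of the symplectic slice representation of $H$. Consequently the points with stabilizer containing a given subtorus $K$ form a symplectic submanifold $M^K$. Each connected component of $M^K$ maps into an affine subspace $\Phi(x) + \mathfrak{k}^{\circ}$, which is rational because $\mathfrak{k}$ is the Lie algebra of a subtorus.

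Only finitely many subtori occur as identity components of stabilizers over any compact subset, by properness and local finiteness of orbit types. So the critical value set $\Phi(M)\setminus\mathcal R$, intersected with a compact set, lies in finitely many rational affine hyperplanes. In particular $\mathcal R$ is open and dense in $\Phi(M)$ relative to its interior.

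\textbf{Step 2: local description of $\Phi(M)$.} The same normal form shows that near each $\alpha=\Phi(x)$ the image $\Phi(M)$ coincides with a finite union of rational polyhedral cones with apex $\alpha$ (local convexity). The fibers $\Phi^{-1}(\alpha)$ are compact and, by Atiyah-type connectedness, connected. The connectedness step uses a Morse--Bott argument on $\langle \Phi, X\rangle$, valid under properness.

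Now take a chamber $\mathcal C$ and a point $\alpha \in \overline{\mathcal C}$. By Step 1, near $\alpha$ finitely many rational hyperplanes through $\alpha$ cut a small ball into sectors. The set $\overline{\mathcal C}$ near $\alpha$ is the closure of one such sector, or more precisely of a union of sectors. We must show it is a single convex polyhedral cone, i.e.\ that $\mathcal C$ cannot wrap around a wall.

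For this, take a regular point $\beta\in\mathcal C$ near $\alpha$ and a wall $W$ through $\alpha$ containing critical values from a component of $M^K$. The local model shows that crossing $W$ changes the topology of the reduced space. Either the image ends at $W$ (when $W$ is an outer wall of the local cone), or $W$ genuinely separates $\mathcal R$ locally, since points of $W$ near $\alpha$ are critical values. Hence $\mathcal C$ lies locally on one side of each wall through $\alpha$. So $\overline{\mathcal C}$ is locally an intersection of finitely many rational half-spaces, which makes it a locally polyhedral set with rational faces.

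\textbf{Step 3: local convexity implies convexity.} $\overline{\mathcal C}$ is closed, connected (as the closure of a connected set), and locally convex at every point, by Step 2. By the Tietze--Nakajima theorem, a closed connected locally convex subset of $\R^n$ is convex. Hence $\overline{\mathcal C}$ is a convex rational locally polyhedral set.

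Finally, $\Phi(M)=\bigcup\overline{\mathcal C}$. The inclusion $\supseteq$ holds because $\Phi(M)$ is closed, since $\Phi$ is proper. For $\subseteq$, a critical value is locally in a finite union of cones of full dimension, by the local model restricted to the top-dimensional part of $\Phi(M)$ (the action is effective, so $\Phi(M)$ has nonempty interior near each point). It is therefore a limit of regular values, and finitely many chambers near it then cover it.

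\textbf{Main obstacle.} The delicate step is Step 2: showing that a chamber lies locally on one side of every wall through $\alpha$. This means showing that every hyperplane piece meeting the interior of the local image consists of critical values on a full neighborhood within that hyperplane, not just at $\alpha$. I would handle it with the local normal form. The image of $M^K$ near $x$ is an open subset of the affine space $\alpha+\mathfrak k^{\circ}$ intersected with a cone, so critical values fill the relevant wall near $\alpha$. Then $\mathcal C$, being connected and disjoint from the walls, stays in one sector.
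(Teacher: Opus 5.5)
\textbf{Gap in Step 2 (local convexity).} This is the core of the theorem. Near $\alpha$ the critical values are not full hyperplanes through $\alpha$. They are cones $\alpha+\fh^\circ+\sum_{i\in I}\R_{\geq 0}\eta_i$ with $\{\eta_i\}_{i\in I}$ not spanning $\fh^*$. Such a wall fills only a wedge of the hyperplane containing it, so a chamber can pass around the edge of the wall to the other side of that hyperplane. Your claim in the ``main obstacle'' paragraph, that critical values fill the hyperplane near $\alpha$ and hence $\mathcal C$ lies on one side of every wall through $\alpha$, is false.

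For instance, let $T^3$ act on $\C^4$ with weights $e_1,e_2,e_3,e_1+e_2+e_3$. The image is the closed positive octant, and the interior walls are the cones $\R_{\geq 0}e_i+\R_{\geq 0}(e_1+e_2+e_3)$. For $i=1$ this wall is $\{x\geq y=z\geq 0\}$, which lies in the plane $y=z$. However, the points $(x,b,b)$ with $0<x<b$ are regular values. The chamber $\{0<x<y,\ 0<x<z\}$ contains both $(1,2,3)$ and $(1,3,2)$, which lie on opposite sides of that plane. So a chamber can be a union of several of your sectors, and nothing in your argument shows such a union is convex. In general, a component of the complement of a union of codimension-one cones need not be convex; consider the complement of a half-plane in $\R^3$.

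Convexity has to come from the structure of the weights, which is what Lemma~\ref{lem:rvlnf} provides. The local chamber through a regular value $\beta$ equals $\fh^\circ+\bigcap\{\sum_{i\in I}\R_{>0}\eta_i : \beta\in\fh^\circ+\sum_{i\in I}\R_{>0}\eta_i\}$. The chamber is easily contained in this intersection of convex cones, since each cone meets the regular values in a clopen set. The real work is showing that the intersection contains no singular value. The paper does this with the Carath\'eodory-type Lemma~\ref{lem:span-one-less} and a projection argument. Proposition~\ref{prop:convexity-abstract}(1) then shows that near $\alpha$ the regular values of $\Phi$ are the intersection of the local sets $\mathcal R_p$ over the finitely many slice types in $\Phi^{-1}(\alpha)$. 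Hence every local component is a convex rational cone.

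\textbf{Gap in Step 3 (local to global).} Suppose every local chamber at $\alpha$ is convex. A global chamber $\mathcal C$ may still, a priori, meet a small ball $U$ about $\alpha$ in several local chambers. Connectedness of $\mathcal C$ says nothing about $\mathcal C\cap U$, contrary to your ``$\mathcal C$, being connected \dots, stays in one sector.'' In that case you do not know that $\overline{\mathcal C}$ is locally convex at $\alpha$, so Tietze--Nakajima cannot be applied. The paper uses Cel's theorem instead. Its hypothesis only asks that each component of $\mathcal C\cap U$ be convex, and its conclusion (that $\mathcal C$ is convex) rules out this scenario afterwards. The orbit-type stratification and the connectedness of fibers you invoke are not needed.
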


The {\bf complexity} of a Hamiltonian $T$-space $(M,\om,\Phi)$ is $\frac{1}{2} \dim M - \dim T$; it is half the dimension of the reduced space $\Phi^{-1}(\alpha)/T$ at a regular value $\alpha \in \Phi(M)$. Hamiltonian $T$-spaces of complexity zero are known as symplectic toric manifolds. Delzant~\cite{De88} classified compact symplectic toric manifolds by their moment image. Later, Lerman-Tolman~\cite{LT97} and Karshon-Lerman~\cite{KL} respectively generalized his classification to symplectic toric orbifolds and symplectic toric manifolds with proper moment map. Complexity one spaces were first studied in dimension four by Karshon~\cite{Karshon99}. Later, in a series of papers~\cite{KT01,KT03,KT14}, Karshon and Tolman classified tall complexity one spaces with proper moment map in any dimensions. These spaces also demonstrate new phenomena: Tolman~\cite{Tolman98} constructed a complexity one space that admits no invariant K\"ahler structure, while all symplectic toric manifolds are K\"ahler~\cite{De88}. 

Fix a complexity one $T$-space $(M,\om,\Phi)$ with proper moment map, we say a point $p \in M$ is {\bf tall} if $\Phi^{-1}(\Phi(p))/T$ is not a singleton, and {\bf short} otherwise. Correspondingly, a moment value $\alpha \in \Phi(M)$ is tall if $\Phi^{-1}(\alpha)/T$ is not a singleton, and short otherwise. We denote the set of short moment values by $\Phi(M)_\short$. A complexity one $T$-space $(M,\om,\Phi)$ is {\bf tall} if every point in $M$ is tall; it is {\bf maximally short} if $ \Phi(M)_\short = \partial \Phi(M)$. Karshon and Tolman~\cite[Lemma 5.4]{KT01} proved that $\Phi(M)_\short$ is a subset of $\partial \Phi(M)$, so this justifies the name ``maximally short''.

An orbit $\mathcal{O}_p \subset M$ is {\bf exceptional} if every
nearby orbit in the moment fiber $\Phi^{-1}(\Phi(p))$ has strictly smaller stabilizer. Correspondingly, a point in an exceptional orbit is called an exceptional point.
The {\bf skeleton} $\Sigma$ of $M$ is the set of tall exceptional orbits (an orbit is tall if any and hence all points in the orbit are tall), considered as a subspace of $M/T.$ The {\bf $m$-skeleton} $\Sigma_m$ is the set of tall exceptional orbits that are at most $m$-dimensional. 
Given $k \in \N$, we say that the skeleton of $M$ is {\bf $\mathbf k$-colorable} if the skeleton is the disjoint union of $k$ (possibly empty) clopen\footnote{More explicitly, each subset is closed and open in the subset topology on the (one-)skeleton.} subsets
so that the orbital moment map\footnote{With the slight abuse of notation, we denote both the moment map and the orbital moment map (the map induced by the moment map on the orbit space) by $\Phi$.} $\Phi \colon M/T \to \ft^*$
restricts to an injection on each subset.
In~\cite{Liu}, we study properties of the skeleton of a compact, connected tall complexity one $T$-space. In this paper, we focus on another extremal case, maximally short complexity one $T$-spaces.

\begin{theorem}\label{thm:skeleton-connected}
     Let $(M,\omega,\Phi)$ be a compact, connected, maximally short complexity one $T$-space. Let $\Sigma \subset M/T$ be its skeleton. If $\dim M \geq 6$, then $\Phi(\Sigma)$ is connected.
\end{theorem}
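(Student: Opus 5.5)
The plan is to identify $\Phi(\Sigma)$ with the set of non-regular values in the interior of $\Phi(M)$, and then to show this set is connected using the chamber decomposition of Theorem~\ref{thm:convexity-of-chambers}.

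\textbf{Step 1: structure.} Since $M$ is maximally short, every interior value is tall and every boundary value is short. By the local normal form, a point with a stabilizer of dimension at least one lies over a non-regular value. In the maximally short case I expect this forces the following: the tall exceptional orbits are exactly the orbits with positive-dimensional stabilizer lying over the interior of $\Phi(M)$. Equivalently, $\Phi(\Sigma)=\Phi(M)\setminus\mathcal R$ intersected with $\operatorname{int}\Phi(M)$, up to closure. I would also check that every interior non-regular value is hit by a tall exceptional orbit: the minimal-dimensional stabilizer stratum in a fiber is exceptional.

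This gives the GKM-type picture promised in the abstract. The fixed points lie over vertices on the boundary. Interior non-regular values form a union of codimension-one walls, namely images of components of $M^{H}$ for circles $H\subset T$, which are $4$-dimensional $T/H$-spaces of complexity one.

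\textbf{Step 2: reduce to chambers.} By Theorem~\ref{thm:convexity-of-chambers}, $\Phi(M)$ is the union of the closed convex polytopes $\overline{\mathcal C}$. Their boundaries consist of pieces of $\partial\Phi(M)$ together with walls lying in $\Phi(\Sigma)$.

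If $\Phi(\Sigma)$ is empty, the regular values are connected. I would rule out emptiness, or treat it as vacuous, by noting that for $\dim M\ge 6$ there is an edge of $\Phi(M)$ whose isotropy weights force an interior wall. The main claim is that the union $W$ of interior walls is connected.

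Suppose $W=W_1\sqcup W_2$ with both parts nonempty, closed, and disjoint. Each chamber's relative boundary is $\partial\overline{\mathcal C}$, which is a topological sphere of dimension $\dim T-1\ge 1$. This uses $\dim M\ge 6$, so $\dim T\ge 2$. The interior-wall part of this sphere is the sphere minus the part in $\partial\Phi(M)$.

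\textbf{Step 3: the key obstacle.} I need to show that for each chamber the set $\partial\overline{\mathcal C}\cap\partial\Phi(M)$ is either empty or a single closed ball-like face union. Then its complement in the sphere is connected, since $\dim T-1\ge1$. Here maximal shortness enters: over a boundary point every fiber is a single orbit. Using the local model near a short boundary facet, the adjacent chamber near that facet is unique and the facet is not split by walls. Hence each chamber meets $\partial\Phi(M)$ in a connected, contractible set. This is because the chamber is convex and the boundary part it meets is a union of facets of $\Phi(M)$ that can be seen from one side; I would argue this via convexity of both $\overline{\mathcal C}$ and $\Phi(M)$.

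Once each chamber's wall-part $\partial\overline{\mathcal C}\setminus\partial\Phi(M)$ is connected, it lies entirely in $W_1$ or entirely in $W_2$. Chambers sharing a wall then have the same label. The adjacency graph of chambers is connected because $\operatorname{int}\Phi(M)$ is connected, and removing codimension-two intersections of walls does not disconnect it. So all walls lie in the same $W_i$, a contradiction.

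Finally, take the closure to include the endpoints of walls on $\partial\Phi(M)$. I expect Step 3, specifically controlling how a chamber meets the boundary using shortness, to be the hardest part.
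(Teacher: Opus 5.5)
The framework of your proposal matches the paper's: reduce to a single chamber whose wall part meets two components of $W$, and use that $\partial\overline{\mathcal C}$ is a sphere of dimension $\dim T-1\ge 1$. But Step~3 carries all the weight, it is not proved, and the mechanism you propose cannot prove it.

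\textbf{Step 3.} Convexity of $\overline{\mathcal C}$ and $\Phi(M)$, together with the local fact that a short facet is never split by walls (Corollary~\ref{cor:short-facet}), says nothing about \emph{how many} facets of $\Phi(M)$ one chamber may contain. Take $\Phi(M)$ a hexagon with vertices $v_1,\dots,v_6$ and walls $[v_1,v_3]$ and $[v_4,v_6]$. The middle chamber $\operatorname{conv}(v_1,v_3,v_4,v_6)$ is convex and contains the opposite edges $[v_3,v_4]$ and $[v_6,v_1]$ as whole facets, and no edge is split. Yet its wall part is two disjoint segments, which could be your $W_1$ and $W_2$.

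The missing idea is the Duistermaat--Heckman density. In complexity one, $f$ is affine on each chamber. Cross into $\mathcal C$ from outside $\Phi(M)$, where $f=0$, through a short facet $F$ that is also a facet of $\overline{\mathcal C}$. Here $\Phi^{-1}(F)$ is a connected toric manifold, and exactly two isotropy weights at a fixed point over $F$ are transverse to $F$. The wall-crossing formula then gives $f_{\mathcal C}=(\langle\cdot,X\rangle-c)/A$ with $A>0$ (Lemma~\ref{lem:cx-1-wall-crossing}). Two distinct such facets would give one nonzero affine function two different zero hyperplanes. Hence $\overline{\mathcal C}$ has at most one facet in $\partial\Phi(M)$.

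You also need the local argument that every $\alpha\in\partial\mathcal C\cap\partial\Phi(M)$ lies on such a facet. All $\xi_i\neq 0$ (Lemma~\ref{lem:maximally-short-weights}) and $\xi_0\xi_h<0$ (Corollary~\ref{cor:cx-1-rvlnf}). So the hyperplane spanned by $\eta_1,\dots,\eta_{h-1}$ supports both the local cone of $\Phi(M)$ and that of $\overline{\mathcal C}$. Corollary~\ref{cor:short-facet} then upgrades this to a common facet. Only with both facts is $\partial\overline{\mathcal C}\cap\partial\Phi(M)$ either empty or a single facet $F$, so that $\partial\overline{\mathcal C}\setminus F$ is connected.

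\textbf{Step 1.} Step~1 is wrong as stated: $\Sigma$ is not the set of orbits with positive-dimensional stabilizer over $\Phi(M)^\circ$. By Lemma~\ref{lem:tall/exc-criteria}(2) with $h=0$, an orbit with finite stabilizer is exceptional when $|\xi_0|>1$, and such orbits can lie over regular values.

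For example, restrict the standard $T^3$-action on $\C P^3$ to the kernel of $(t_1,t_2,t_3)\mapsto t_1t_2t_3^2$. This gives a six-dimensional maximally short $T^2$-space. In it, the points over the open facet $\{x_3=0\}$ of the simplex have stabilizer $\Z_2$, are tall and exceptional, and their images fill an entire chamber. The same example has a fixed point over an interior value, contrary to your remark about vertices.

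So even with Step~3 repaired, you only get connectedness of $\Phi(\Sigma_{n-1})$ (Theorem~\ref{thm:n-1-skeleton}). You must add, as the paper does, the following for such an orbit through $p$ with stabilizer $H$. The component of $M^H$ through $p$ is a compact symplectic toric $(T/H)$-manifold of exceptional points \cite[Corollary 2.9]{Liu}. Its convex image connects $\Phi(p)$ to $\Phi(\Sigma_{n-1})$ inside $\Phi(\Sigma)$.
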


This theorem is not true in dimension four, because there exist Hamiltonian $S^1$-spaces whose orbits are either isolated fixed points or free orbits. It is not true either if the complexity one $T$-space is tall (see Figure~\ref{fig:counterexample}). Moreover, Theorem~\ref{thm:skeleton-connected} gives a bound for the number of symplectic toric $(T \times S^1)$-manifolds whose underlying complexity one $T$-space is the same as the given maximally short complexity one $T$-space.

\begin{figure}
    \centering
    \begin{subfigure}{.45\textwidth}
    \centering
            \begin{tikzpicture}
      % Vertices
\coordinate (A) at (0,0,0);
\coordinate (B) at (0,0,1);
\coordinate (C) at (0,1,1);
\coordinate (D) at (1,1,2);
\coordinate (E) at (1,0,2);
\coordinate (F) at (2,0,2);
\coordinate (G) at (2,1,2);
\coordinate (H) at (3,0,1);
\coordinate (I) at (3,1,1);
\coordinate (J) at (3,0,0);
\coordinate (K) at (3,1,0);
\coordinate (L) at (0,1,0);

% Bottom face
\draw (A)--(J)--(K)--(L)--cycle;

% Top face
\draw (B)--(C)--(D)--(G)--(I)--(H)--(F)--(E)--cycle;

% Vertical edges
\draw (A)--(B);
\draw (L)--(C);
\draw (J)--(H);
\draw (K)--(I);

% Upper ridge
\draw[thick,blue] (D)--(E);
\draw (E)--(F);
\draw[thick,red] (F)--(G);
    \end{tikzpicture}
    \end{subfigure}
    \begin{subfigure}{.45\textwidth}
    \centering
    \begin{tikzpicture}
      \draw (7,0) -- (10,0) -- (10,2) -- (7,2) -- (7,0);
      \draw[thick,blue] (8,0) -- (8,2);
      \draw[thick,red] (9,0) -- (9,2);
    \end{tikzpicture}
        \end{subfigure}
    \caption{Consider the six dimensional symplectic toric manifold with the Delzant polytope given on the left (we orient the axes so that the rectangle on the back lies on the $xy$-plane). If we restrict the $T^3$-action to $T^2 \times \{1\}$, then we get a tall complexity one $T^2$-space and the moment image of the skeleton has two connected components (the red and blue line segments).}
    \label{fig:counterexample}
\end{figure}

Fix a complexity one $T$-space $(M,\om,\Phi)$, a function $f: M \to \R$ is a {\bf toric extension} of $(M,\om,\Phi)$ if $(M,\om, (\Phi,f))$ is a symplectic toric $(T\times S^1)$-manifold. Two functions $f,g$ are equivalent if $f-g$ is a constant. Let $\Ext(M,\omega,\Phi)$ denote the set of equivalent classes of toric extensions of the complexity one $T$-space $(M,\om,\Phi)$. Notice that $\ell\times \Z_2$ acts on this set by
$(A,-1).f = -(f+\langle\Phi, A\rangle$), here $\ell \subset \ft$ is the integral lattice and $\Z_2 =\{-1,1\}$. In~\cite{LPT}, we gave a sufficient and necessary condition for when $\Ext(M,\omega,\Phi)$ is nonempty. Moreover, we noticed that the total number of extensions is closely related to the number of $2$-colorings (i.e. different decomposition of the skeleton into $2$ disjoint clopen subsets so that the orbital moment map is injective on each subset) of the skeleton. Theorem~\ref{thm:skeleton-connected} helps us determine the size of $\Ext(M,\omega,\Phi)$.

\begin{theorem}\label{thm:max-short-bound}
    Let $(M,\omega,\Phi)$ be a compact, connected, maximally short complexity one $T$-space of dimension at least $6$. If $\Ext(M,\omega,\Phi)$ is nonempty, then $\Ext(M,\omega,\Phi) \cong \ell \times \Z_2$.
\end{theorem}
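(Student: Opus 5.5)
The plan is to show that the action of $\ell \times \Z_2$ on $\Ext(M,\omega,\Phi)$ is free and transitive. Then any choice of base point $f_0$ gives the bijection $(A,\epsilon) \mapsto (A,\epsilon).f_0$.

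\emph{Freeness.} Suppose $(A,\epsilon).f = f$ up to a constant. If $\epsilon = 1$, then $\langle \Phi, A\rangle$ is constant on $M$. Since the action is effective, $\Phi(M)$ has nonempty interior, so this forces $A = 0$. If $\epsilon=-1$, then $2f = -\langle\Phi,A\rangle + c$, so $f$ would be a function of $\Phi$. In that case $(\Phi,f)$ could not be a toric moment map, because its image would lie in a graph of dimension $\dim T$ rather than having dimension $\dim T + 1$. Hence the action is free.

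\emph{Transitivity.} This is where Theorem~\ref{thm:skeleton-connected} enters, via the relation from \cite{LPT} between extensions and $2$-colorings of the skeleton. Given a toric extension $f$, the toric polytope $\Delta_f = (\Phi,f)(M) \subset \ft^*\times\R$ projects onto $\Phi(M)$. Each fiber is the interval $\{\alpha\}\times[f_{\min}(\alpha),f_{\max}(\alpha)]$, which degenerates to a point exactly over short values. The skeleton is the set of tall exceptional orbits. Under $(\Phi,f)$ these map into the "upper" and "lower" boundary parts of $\Delta_f$, namely the graphs of $f_{\max}$ and $f_{\min}$ over the interior, restricted to the lower-dimensional strata. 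This yields a $2$-coloring $\Sigma = \Sigma_+ \sqcup \Sigma_-$.

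Now let $g$ be another extension. The key step is to show that $g$ induces the same $2$-coloring as $f$, up to the swap given by $\epsilon=-1$. I would argue this as follows. The maximally short hypothesis means every boundary value is short, so $\Sigma$ lies over the interior of $\Phi(M)$. The two sheets $f_{\max}$ and $f_{\min}$ meet only over $\partial\Phi(M)$, so locally near the skeleton there are exactly two choices of coloring. Connectedness of $\Phi(\Sigma)$ (Theorem~\ref{thm:skeleton-connected}, valid since $\dim M\ge 6$), together with the requirement that the orbital moment map be injective on each colour class, then forces the coloring to be globally determined up to the swap. Concretely: over each point of $\Phi(\Sigma)$ where two skeleton orbits share a moment value, they must receive different colors. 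Propagating along the connected set $\Phi(\Sigma)$ shows the coloring is unique up to swapping. I expect this propagation to be the main obstacle. It requires care at points of $\Phi(\Sigma)$ covered by only one skeleton orbit, where the color is a priori free. There I would use the local normal form of the toric polytope: a skeleton edge lying on the upper sheet cannot continue onto the lower sheet without passing through $\partial\Phi(M)$.

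Having matched the colorings after possibly replacing $g$ by $-g$, I would invoke the result of \cite{LPT}. It says two extensions inducing the same coloring differ by $\langle \Phi, A\rangle + c$ with $A \in \ell$. The argument behind it is that $f-g$ generates a circle action commuting with $T$ and preserving all $T$-orbits, and its Hamiltonian descends to a function of $\Phi$ that is affine on each chamber. Affineness globally follows from Theorem~\ref{thm:convexity-of-chambers} and the fact that the skeleton data agree, and integrality of $A$ comes from the period of the flow. This gives transitivity and completes the proof.
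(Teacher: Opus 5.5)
Your proposal is correct and follows the paper's route. An extension yields a $2$-coloring (\cite[Lemma 1.2]{LPT}), Theorem~\ref{thm:skeleton-connected} makes that coloring unique up to swapping, and \cite[Theorem 2.6]{LPT} then identifies the extensions with $\ell\times\Z_2$; the paper relies on that result instead of your separate freeness check. The step you flag as the main obstacle needs no toric-polytope geometry, as in Lemma~\ref{lem:unique-2-coloring}: the images of the ``agree'' part $E_{11}\sqcup E_{22}$ and the ``disagree'' part $E_{12}\sqcup E_{21}$ of two colorings are disjoint by injectivity and closed because the $E_{ij}$ are clopen and $\Phi$ is closed, so connectedness of $\Phi(\Sigma)$ forces one of them to be empty, and points over which only one skeleton orbit lies need no special care.
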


The structure of the paper is as follows. In Section~\ref{sec:background}, we review some properties of complexity one spaces and some facts about Duistermaat-Heckman measures. In Section~\ref{sec:convexity}, we first prove a general convexity result and use it to prove an upgraded version (Theorem~\ref{thm:convexity-regular-values}) of Theorem~\ref{thm:convexity-of-chambers}. In Section~\ref{sec:connectedness}, we use Theorem~\ref{thm:convexity-of-chambers} and Duistermaat-Heckman measures to prove Theorem~\ref{thm:skeleton-connected}. In fact, we prove a stronger result Theorem~\ref{thm:n-1-skeleton} which states that the moment image of the set of exceptional orbits with codimension at least $1$ is connected. Then we use Theorem~\ref{thm:skeleton-connected} to prove Theorem~\ref{thm:max-short-bound}.

{\bf Acknowledgements.} The author would like to thank Susan Tolman for her patient guidance and invaluable suggestions.

\section{Background}\label{sec:background}
In this section, we prove some useful facts about maximally short complexity one spaces and review important results about Duistermaat-Heckman measures.

Let $(M,\omega,\Phi)$ be a Hamiltonian $T$-space.
Recall that the symplectic slice at $p \in M$ is the symplectic vector space \[(T_p \mathcal{O})^\omega / (T_p \mathcal{O} \cap (T_p \mathcal{O})^\omega)\] where $\mathcal{O}$ is the $T$-orbit of $p$. Let $H \subset T$ be the stabilizer group of $p$. The isotropy representation of $H$ on $T_p M$ induces a representation on the quotient space, called the {\bf slice representation.} Since the symplectic slice inherits the complex structure from $T_pM$, the slice representation decomposes into one-dimensional complex irreducible representations each of which is determined by a weight vector $\eta_i \in \mathfrak{h}^*$. Under the identification $(T_p \mathcal{O})^\omega / (T_p \mathcal{O} \cap (T_p \mathcal{O})^\omega) \cong \C^k$, we can express the slice representation as a group homomorphism $\rho: H \to (S^1)^k$ such that $h.(z_1,\ldots,z_k) = (\rho_1(h)z_1,\ldots,\rho_k(h)z_k)$, where the differential of $\rho_i$ is $\eta_i$.

Once and for all, we fix an inner product on $\mathfrak{t}$ and identify $\mathfrak{h}^*$ as a subspace of $\mathfrak{t}^*$.
Consider the space
$Y = T \times_H \fh^\circ \times \C^k$
with $T$ acting on the left. The action of $T$ is Hamiltonian,
and there is a moment map
$\Phi_Y([t,\nu,z]) = \frac{1}{2} \sum_{j=1}^k |z_j|^2 \eta_j + \nu.$
We will call this the \emph{local model} because of the following local normal form by Guillemin-Sternberg \cite{GS84} and Marle~\cite{Marle}.

\begin{theorem}[Local normal form]\label{thm:lnf}
Let $(M,\omega,\Phi)$ be a Hamiltonian $T$-space. Given a point $p \in M$ with stabilizer group $H \subset T$ and slice representation $\rho$, there exists a neighborhood of the orbit $T \cdot p$ that is equivariantly symplectomorphic to a neighborhood of the orbit $\{[t,0,0]\}$ in the model $Y:= T \times_H \fh^\circ \times \C^k$.
\end{theorem}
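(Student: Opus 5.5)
The plan is the standard equivariant Moser argument (Guillemin--Sternberg, Marle). First we construct the symplectic structure on $Y = T\times_H \fh^\circ\times\C^k$. We realize $Y$ as the symplectic reduction at $0$ of $T^*T\times\C^k \cong T\times\ft^*\times\C^k$ by the diagonal $H$-action. Here $H$ acts on $T^*T$ by right multiplication and on $\C^k$ through $\rho$. The $T$-action by left multiplication commutes with this $H$-action and descends to $Y$. A direct computation then shows that the induced moment map is $\Phi_Y([t,\nu,z])=\nu+\frac12\sum_j|z_j|^2\eta_j$, using the fixed inner product to split $\ft^*=\fh^\circ\oplus\fh^*$.

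Next comes the linear algebra at $p$. Since $T$ is abelian and $\Phi$ is $T$-invariant, $\omega(X^\#,Y^\#)=d\langle\Phi,X\rangle(Y^\#)=0$, so the orbit $\mathcal O=T\cdot p$ is isotropic. Hence $T_p\mathcal O\subset (T_p\mathcal O)^\omega$, and $T_p\mathcal O\cap(T_p\mathcal O)^\omega=T_p\mathcal O\cong\ft/\fh$. We choose an $H$-invariant compatible complex structure on $T_pM$ (averaging over the compact group $H$). This gives an $H$-equivariant symplectic splitting
\[T_pM \cong (\ft/\fh)\oplus(\ft/\fh)^*\oplus V,\]
where $V$ is the symplectic slice and $(\ft/\fh)^*\cong\fh^\circ$. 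The tangent space of $Y$ at $[1,0,0]$ has exactly the same decomposition, with the same $H$-representation $\rho$ on $V\cong\C^k$. The isomorphism is therefore an $H$-equivariant linear symplectomorphism $L\colon T_{[1,0,0]}Y\to T_pM$ carrying the orbit direction to the orbit direction.

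We then globalize along the orbit. We use an $H$-invariant Riemannian metric on $M$ and its exponential map on the normal directions $\fh^\circ\times\C^k$ composed with $L$. This defines an $H$-equivariant map from a neighborhood of $0$ in $\fh^\circ\times\C^k$ into $M$, and we extend it $T$-equivariantly by $[t,\nu,z]\mapsto t\cdot\exp_p(L(\nu,z))$. This map is well defined because the construction is $H$-equivariant. It is a local diffeomorphism along the orbit, and for a small enough invariant neighborhood it is an embedding, by compactness of $\mathcal O$ and injectivity on the orbit. Call it $\psi$. By construction, $\psi^*\omega$ and $\omega_Y$ agree at every point of the zero orbit: they agree at $[1,0,0]$ by the choice of $L$, and hence everywhere on the orbit by $T$-invariance.

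Finally we apply Moser's method. Set $\omega_s=(1-s)\omega_Y+s\psi^*\omega$. This is nondegenerate on a small invariant neighborhood of the orbit for all $s\in[0,1]$. The difference $\psi^*\omega-\omega_Y$ is closed, $T$-invariant and vanishes on the orbit. The relative Poincar\'e lemma, using the $T$-equivariant radial retraction $[t,\nu,z]\mapsto[t,s\nu,sz]$ onto the orbit, gives a $T$-invariant primitive $\beta$ vanishing along the orbit. The time-dependent vector field $v_s$ defined by $\iota_{v_s}\omega_s=-\beta$ is $T$-invariant and zero on the orbit. Its flow therefore exists up to time $1$ on a smaller neighborhood, is $T$-equivariant, fixes the orbit, and pulls $\psi^*\omega$ back to $\omega_Y$. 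Two moment maps for the same $T$-action and symplectic form on a connected neighborhood differ by a constant. So after translating by $\Phi(p)$ (or normalizing $\Phi(p)=0$), the moment maps also correspond.

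The main obstacle I expect is the second step. The $H$-equivariant symplectic splitting of $T_pM$ must be made compatible with the model, so that the slice weights are exactly the $\eta_j$ and $\fh^\circ$ pairs correctly with the orbit directions. The Moser step is routine once the forms agree along the orbit.
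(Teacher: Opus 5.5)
The paper gives no proof of this theorem and only cites Guillemin--Sternberg and Marle. Your argument is the standard proof from those sources and is correct: build $Y$ by $H$-reduction of $T^*T\times\C^k$, match the $H$-equivariant linear symplectic data at $p$, extend $T$-equivariantly via an $H$-invariant exponential map, then run an equivariant Moser argument, which amounts to inlining the equivariant isotropic embedding theorem. One small point to tighten is that an $H$-invariant compatible complex structure comes from averaging a metric and then polar-decomposing against $\omega_p$, since averaging complex structures directly does not give a complex structure.
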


The next lemma establishes that the moment image of the local model agrees locally with $\Phi(M)$. It is a special case of \cite[Theorem 6.5]{Sjamaar}; see also \cite[Theorems 1.2, 4.3, 6.1 and 6.2]{LMTW}. 

\begin{lemma}\label{lem:local-cone}
Let  $(M,\omega,\Phi)$ be a connected Hamiltonian $T$-space so that $\Phi$ is proper as a map to a convex open subset of $\ft^*$.
Given $p \in M$,  let $Y_p$ be the local model associated to $p$ with moment map $\Phi_p$, normalized so that $\Phi_p([1,0,0]) = \Phi(p)$.
Then there exists an open neighborhood $U$ of $\Phi(p)$ such that
$\Phi(M) \cap U = \Phi_p(Y_p) \cap U$.
\end{lemma}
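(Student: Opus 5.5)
The plan is to compare $\Phi(M)$ with the local cone near $\Phi(p)$ in two steps: first use the local normal form (Theorem~\ref{thm:lnf}) to show $\Phi(M)\cap U \supseteq \Phi_p(Y_p)\cap U$ for small $U$, then prove the reverse inclusion $\Phi(M)\cap U\subseteq \Phi_p(Y_p)\cap U$ via a connectedness and convexity argument for proper moment maps. For the first inclusion, Theorem~\ref{thm:lnf} gives an equivariant symplectomorphism from a $T$-invariant neighborhood $V$ of $T\cdot p$ onto a neighborhood $W$ of the zero orbit in $Y_p$. Two moment maps for the same action on a connected set differ by a constant, so after the stated normalization $\Phi|_V$ corresponds to $\Phi_p|_W$. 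The map $\Phi_p$ is open as a map onto the cone $\Phi(p)+C_p$, where
\[C_p=\fh^\circ+\operatorname{span}_{\R_{\ge0}}\{\eta_1,\dots,\eta_k\}.\]
This holds because $\nu$ ranges over all of $\fh^\circ$ and the $|z_j|^2$ over all nonnegative reals. Hence $\Phi_p(W)$ contains $(\Phi(p)+C_p)\cap U$ for a small ball $U$, which gives the first inclusion.

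For the reverse inclusion, the fiber is the natural tool. Properness of $\Phi$ into the convex open set $\mathcal U$ shows that $\Phi^{-1}(\Phi(p))$ is compact. I would then invoke connectedness of the fibers of proper moment maps to a convex set (the Lerman--Meinrenken--Tolman--Woodward ``local-global'' principle). This gives $\Phi^{-1}(\Phi(p))=T\cdot p$ plus possibly other orbits, with the whole fiber connected.

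A connected fiber alone does not suffice, since other orbits in the fiber might carry different local cones. So I would use the local-convexity statement itself: at every point $q$ of the fiber, the local cone $C_q$ is contained in the tangent cone of the convex set $\Phi(M)$ at $\Phi(p)$. These cones are all equal to that tangent cone, because $\Phi(M)$ is locally convex polyhedral. This is exactly where Sjamaar's Theorem~6.5 enters, and I would cite it rather than reprove it.

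Concretely, the steps are:
\begin{enumerate}[label=(\roman*)]
\item Use compactness of the fiber to cover it by finitely many local-model neighborhoods $V_{q_i}$.
\item Use properness to choose $U$ so small that $\Phi^{-1}(U)\subset\bigcup_i V_{q_i}$.
\item Conclude that $\Phi(M)\cap U=\bigcup_i \bigl(\Phi(p)+C_{q_i}\bigr)\cap U$.
\item Show that $C_{q_i}=C_p$ for all $i$.
\end{enumerate}

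Step (iv) is the main obstacle. I would attack it by connectedness: the map $q\mapsto C_q$ is locally constant along the fiber. Near $q$, nearby fiber points $q'$ lie in the local model $Y_q$, with the same moment value, namely $\nu=0$ and $\sum|z_j|^2\eta_j=0$. Their cones are faces-or-equal of $C_q$, and one has to check that they are in fact equal when $\Phi(q')=\Phi(q)$. Connectedness of the fiber then forces all the cones to equal $C_p$.
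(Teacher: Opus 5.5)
Your route differs from the paper's, which gives no argument of its own and simply quotes \cite[Theorem 6.5]{Sjamaar} (see also \cite{LMTW}). Your reduction is sound. It consists of three pieces: the local normal form on finitely many orbits covering the compact fiber, properness to shrink $U$, and a proof that all orbits in $\Phi^{-1}(\Phi(p))$ have the same local cone, via local constancy plus fiber connectedness.

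The check you leave open in (iv) is short. Suppose $q'=[t,0,z]$ lies in the chart at $q$ with $\Phi(q')=\Phi(q)$. Then $\sum_j|z_j|^2\eta_j=0$. Replacing $|z_j|^2$ by $|z_j|^2+2x_j$ with $x_j\ge 0$, and adding $\nu\in\fh^\circ$, moves the moment value to $\Phi(q)+\nu+\sum_j x_j\eta_j$. Choose $x$ supported on linearly independent weights (Lemma~\ref{lem:span-one-less}); then $x$ stays small for small targets. So the image of every neighborhood of $q'$ contains $(\Phi(q)+C_q)\cap U$ for small $U$. It is also contained in $\Phi_q(Y_q)=\Phi(q)+C_q$. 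Comparing germs at the vertex with $q'$'s own local model gives $C_{q'}=C_q$.

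The same Carath\'eodory argument is what actually justifies openness of $\Phi_p$ at the vertex in your first step. Knowing that $\nu$ and the $|z_j|^2$ range over everything does not by itself give small preimages.

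Drop the paragraph claiming the $C_q$ all coincide with the tangent cone ``because $\Phi(M)$ is locally convex polyhedral''. A convex union of convex cones need not equal any of its pieces: two adjacent quadrants form a half-plane. Also, citing Sjamaar's theorem at that point is citing the lemma itself.

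As for what each approach buys: the result in \cite{LMTW} that supplies fiber connectedness also asserts that $\Phi\colon M\to\Phi(M)$ is open. With openness, the reverse inclusion is one line, since $\Phi(V_p)$ is then relatively open in $\Phi(M)$ and lies in $\Phi(p)+C_p$. So your argument does not reduce the dependence on that machinery. What it does buy is a clean separation: fiber connectedness is the only global input. Everything else is a local computation explaining why different orbits in one fiber cannot contribute different cones.
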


We call a model $Y$ a \textit{complexity one local model} if $(Y,\omega,\Phi_Y)$ is a complexity one space. In this case, $Y =  T \times_H \fh^\circ \times \C^{h+1}$, where $h =\dim H$.
The next lemma follows immediately from \cite[Lemmas 5.4 and 5.8]{KT01} and it plays an essential role in this section.

\begin{lemma}\label{lem:defining-monomial}
    Let $Y =  T \times_H \fh^\circ \times \C^{h+1}$ be a complexity one local model with moment map $\Phi_Y([t,\nu,z]) = \frac{1}{2} \sum_{j=0}^{h} |z_j|^2 \eta_j + \nu$. There exists a (unique up to sign) vector $\xi  \in \Z^{h+1}$ such that the following sequence is exact:
\begin{equation}\label{ses} 1 \to H \xrightarrow{\rho} (S^1)^{h+1} \xrightarrow{P} S^1 \to 1,
\end{equation}
where $\rho:H \to (S^1)^{h+1}$ is the slice representation at $[1,0,0]$ and $P(t_0,...,t_{h}) = \prod_{j=0}^{h} t_j^{\xi_j}$. Moreover, $\sum a_j \eta_j=0$ if and only if $a = \lambda \xi$ for some $\lambda \in \R$.  Finally, $Y$ is tall if and only if $\xi \in \Z^{h+1}_{\geq 0}$ or $\xi \in \Z^{h+1}_{\leq 0}$.
\end{lemma}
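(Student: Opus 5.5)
The plan is to reduce the statement to the algebraic content of the short exact sequence \eqref{ses}, using the identification of the symplectic slice with $\C^{h+1}$ and the fact that the weights $\eta_j$ are the differentials of the components $\rho_j$.

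First, I will show that the $\eta_j$ span $\fh^*$ and satisfy exactly one linear relation. Since $H$ is the stabilizer and the action is effective, the slice representation $\rho$ is injective. Otherwise a nontrivial subgroup of $H$ would act trivially on the symplectic slice, hence, by Theorem~\ref{thm:lnf}, on a neighborhood of the orbit, and therefore on all of $M$ by connectedness. Consequently $d\rho:\fh\to\R^{h+1}$ is injective, so its image $d\rho(\fh)$ is an $h$-dimensional subspace. Its annihilator is then a line $L\subset\R^{h+1}$. Because $\rho(H)$ is a closed subgroup, $d\rho(\fh)$ is a rational subspace, and so $L$ is spanned by a primitive integral vector $\xi\in\Z^{h+1}$, unique up to sign. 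The relation $\sum a_j\eta_j=0$ says exactly that $a$ annihilates $d\rho(\fh)$, which gives the statement ``$\sum a_j\eta_j=0$ iff $a\in\R\xi$''.

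Next, I will establish exactness of \eqref{ses}. Define $P$ by $\xi$. Then $P$ is surjective because $\xi$ is primitive, and $\ker P$ is a closed subgroup whose identity component is the subtorus with Lie algebra $d\rho(\fh)$. It also contains $\rho(H)$, since $P\circ\rho$ is trivial on $H^0$. The main obstacle is to show $\rho(H)=\ker P$, which amounts to controlling the finite components. Here I plan to use the fact that in $Y=T\times_H\fh^\circ\times\C^{h+1}$ the group $T$ acts effectively. A point $z$ in $\C^{h+1}$ with all coordinates nonzero has stabilizer $\ker\rho=1$ in $H$. I also use that $H$ is the full stabilizer of $p$, which is exactly the stabilizer of $[1,0,0]$. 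I expect the primitivity of $\xi$ together with the compatibility $\ker P/\rho(H)$ finite, and the structure of stabilizers in the complexity one model, to force $\ker P$ to be connected modulo $\rho(H)$. If this step fails, I will cite \cite[Lemma 5.4]{KT01} for exactness directly.

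Finally, I will handle tallness. The fiber $\Phi_Y^{-1}(\alpha)/T$ near $\Phi_Y([1,0,0])$ is identified with $\{z:\frac12\sum|z_j|^2\eta_j=\beta\}/H$. Its image under the invariant $\prod z_j^{\xi_j}$, made into a monomial by conjugating the coordinates with negative exponent, parametrizes the reduced space. At $\beta=0$ the fiber $\{z:\sum|z_j|^2\eta_j=0\}$ consists of the vectors $(|z_j|^2)\in\R_{\ge0}^{h+1}\cap\R\xi$. This set is nonzero exactly when $\xi$ or $-\xi$ lies in $\Z^{h+1}_{\ge0}$. In that case the fiber over the vertex contains more than one orbit, and similarly over nearby values, so the model is tall. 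Otherwise the fiber over $0$ is just $\{0\}$, so the point is short. This yields the last equivalence.
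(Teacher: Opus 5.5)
Your derivation of the linear relation among the $\eta_j$ and your tallness argument are fine (the sign criterion depends only on the line $\R\xi$). The exactness step, however, has a genuine gap: the primitive vector you choose is the wrong $\xi$ whenever $H$ is disconnected. If $\xi$ is primitive, it extends to a $\Z$-basis of $\Z^{h+1}$, so $\ker P$ is connected and equals the subtorus $\rho(H^0)$. Your inference ``$\rho(H)\subseteq\ker P$ since $P\circ\rho$ is trivial on $H^0$'' is invalid. The true relation is the reverse, $\ker P\subseteq\rho(H)$ with $\rho(H)/\ker P\cong H/H^0$, so no appeal to effectiveness or to stabilizers can force equality.

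Concretely, take $h=0$ and $H=\Z_k\subset T$ with $k\ge 2$, acting on $\C$ through its embedding in $S^1$. Your recipe gives $\xi=\pm1$ and $\ker P=1\neq\rho(H)$, whereas exactness needs $\xi=\pm k$. This non-primitivity matters in the paper: Lemma~\ref{lem:tall/exc-criteria}(2) detects exceptional points via $|\xi_k|>1$, and $[t,\nu,0]$ is exceptional in this example. Falling back on \cite{KT01} does not repair this, because the vector supplied there is in general not your primitive one.

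The repair needs no component counting. Since $\rho$ is injective and $\rho(H)$ is a closed subgroup of dimension $h$, the quotient $(S^1)^{h+1}/\rho(H)$ is a compact connected one-dimensional abelian Lie group, hence a circle. Composing the quotient map with an isomorphism onto $S^1$ gives a surjective character $P$ whose kernel is exactly $\rho(H)$. Every character of a torus is a monomial $\prod t_j^{\xi_j}$. Two surjective characters with the same kernel induce isomorphisms $(S^1)^{h+1}/\rho(H)\to S^1$ that differ by an automorphism of $S^1$, so $\xi$ is unique up to sign. Since $dP$ vanishes exactly on $d\rho(\fh)$, this $\xi$ is a nonzero, possibly non-primitive, vector on your line $L$, and the rest of your argument goes through unchanged. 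The paper itself gives no argument here; it simply quotes \cite[Lemmas 5.4 and 5.8]{KT01}.
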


The monomial $P$ is called the {\bf defining monomial of the complexity one local model $Y$.}
We now give a criterion for exceptional points in terms of the exponents in the defining monomials.

\begin{lemma}\label{lem:tall/exc-criteria}
    Let $Y = T \times_H \fh^\circ \times \C^{h+1}$ be a complexity one local model with moment map $\Phi_Y([t,\nu,z]) = \frac{1}{2} \sum_{k=0}^{h} |z_k|^2 \eta_k + \nu $ and let $\xi_0,\ldots,\xi_h$ be the exponents of the defining monomial.
\begin{enumerate}
\item A point $[t, \nu,z] \in Y$ is tall if and only if $\xi_i \xi_j \geq 0$ for all $i, j \in \{0,\dots,h\}$ such that $z_i=z_j=0$.
\item  A point $[t, \nu, z] \in Y$ is exceptional if and only if
$\sum_{z_k = 0} |\xi_k| > 1$, where the sum is over all $k \in \{0,\dots,h\}$ such that $z_k = 0$.
\end{enumerate}
\end{lemma}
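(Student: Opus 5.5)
The plan is to reduce both statements to explicit computations in the coordinates $z$ of $\C^{h+1}$. Two ingredients from Lemma~\ref{lem:defining-monomial} do all the work: the exact sequence \eqref{ses}, and the description of the kernel of $a\mapsto \sum a_j\eta_j$. Fix a point $y=[t,\nu,z]$ and let $I=\{k : z_k=0\}$ and $J$ its complement.

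\emph{Stabilizers.} The stabilizer of $y$ in $T$ is $\{h\in H : \rho_k(h)=1 \text{ for } k\in J\}$. Via \eqref{ses} we identify $H$ with $\ker P\subset (S^1)^{h+1}$. The stabilizer then becomes the kernel of the character $s\mapsto\prod_{k\in I}s_k^{\xi_k}$ on $(S^1)^I$.

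\emph{Fibers.} Since $T$ acts on the $T$-factor, $\Phi_Y^{-1}(\Phi_Y(y))/T$ is the level set in $\fh^\circ\times\C^{h+1}$ modulo $H$. The $\fh^\circ$-component of $\Phi_Y$ forces $\nu$ to be fixed. Because $\sum a_j\eta_j=0$ exactly when $a\in\R\xi$, the fiber is
\[\Bigl\{w : \tfrac12|w_j|^2=\tfrac12|z_j|^2+\lambda\xi_j \ge 0 \text{ for all } j\Bigr\}.\]
Here $\lambda$ ranges over an interval $\Lambda\ni 0$. For $k\in J$ the constraint is open near $\lambda=0$, so $\Lambda$ is a neighborhood of $0$ only if $\lambda\xi_k\ge 0$ works for both signs for $k\in I$. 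Concretely, $\Lambda$ is nondegenerate if and only if all $\xi_k$ with $k\in I$ have a common sign. (This uses $\xi\neq0$ in the case $\xi_I=0$.)

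\emph{Tallness.} If $\Lambda$ is nondegenerate, the fiber contains points with different moduli, hence different $H$-orbits, so $y$ is tall. Otherwise $\Lambda=\{0\}$, so $\xi_I$ has entries of both signs and is in particular nonzero. The fiber is then the torus of points with the prescribed moduli, and I claim $H$ acts transitively on it. The projection of $\ker P$ to $(S^1)^J$ is surjective: given $s_J$, solve $\prod_{k\in I}s_k^{\xi_k}=\prod_{k\in J}s_k^{-\xi_k}$, which is possible since some $\xi_k$ with $k\in I$ is nonzero. So the fiber quotient is a point and $y$ is short. This proves (1).

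\emph{Exceptionality, short case.} If $y$ is short, its orbit is the whole fiber, so it is vacuously exceptional. In that case $\xi_I$ has both signs, so $\sum_{k\in I}|\xi_k|\ge 2$, consistent with (2).

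\emph{Exceptionality, tall case.} Here the step needing most care is listing all nearby orbits in the fiber. Points with $\lambda=0$ lie in the orbit of $y$ by the transitivity argument; when $\xi_I=0$ one instead uses that $(S^1)^{h+1}/H\cong S^1$ is detected by $P$, which is constant near $y$ on that slice. Points with small $\lambda\neq0$ of the allowed sign have zero set $I'=\{k\in I:\xi_k=0\}$. Their stabilizer is $(S^1)^{I'}$, since the character is trivial there. This is contained in $\mathrm{Stab}(y)$, and equality holds exactly when the kernel of $s\mapsto\prod_{k\in I\setminus I'}s_k^{\xi_k}$ on $(S^1)^{I\setminus I'}$ is trivial. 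Since all exponents $\xi_k$ with $k\in I\setminus I'$ are nonzero, that kernel has dimension $|I\setminus I'|-1$ and, when this is $0$, order $|\xi_k|$. So it is trivial iff $I\setminus I'$ is a single index $k$ with $|\xi_k|=1$. If $I\setminus I'$ is empty, then $\sum_{k\in I}|\xi_k|=0$ and the nearby fiber points have the same stabilizer $(S^1)^I$, so $y$ is not exceptional. Therefore the nearby orbits in the fiber all have strictly smaller stabilizer iff $\sum_{k\in I}|\xi_k|>1$, which proves (2).
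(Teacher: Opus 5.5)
Your proposal is correct and follows essentially the same route as the paper. You identify $H$ with $\ker P$, so the stabilizer of $[t,\nu,z]$ is the kernel of $\prod_{z_k=0}t_k^{\xi_k}$; you describe the fiber as the one-parameter family $|w_k|^2=|z_k|^2+s\xi_k$; you decide tallness by whether some $s\neq 0$ is admissible; and you compare with the stabilizer $(S^1)^{I'}$ of the perturbed points, with short points exceptional vacuously. Your surjectivity argument for $\ker P\to(S^1)^J$ also fills in a step the paper leaves implicit when it treats points with the same moduli as lying in one orbit.

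Drop the aside that for $\xi_I=0$ the value of $P$ is constant near $y$ on the $\lambda=0$ slice. It is false: the invariant $\prod_{k\in J}w_k^{\xi_k}$ separates distinct nearby orbits there. Nothing depends on it, because in that case every nearby fiber point has stabilizer $(S^1)^I$, which is all your final paragraph uses.
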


\begin{proof}
    (1) By definition, $[t, \nu,z]$ is tall if and only if there exists another orbit in the same moment fiber.
    By Lemma~\ref{lem:defining-monomial}, a point $[t',\nu',w]$ is in the same fiber as $[t,\nu,z]$ if and only if $\nu' = \nu$ and there exists $s \in \R$ such that $|w_k|^2 -|z_k|^2 = s \xi_k$ for all $k=0,...,h$. Since $\xi_0,\ldots,\xi_h$ are not all zero, $|w_k|= |z_k|$ for all $k =0,\ldots,h$ if and only if $s=0$. Hence, $[t, \nu,z]$ is tall if and only if there exists $s \neq 0$ such that $|z_k|^2 +s \xi_k \geq 0$ for all $k =0,\ldots,h$.
    
    If $\xi_i \xi_j \geq 0$ for all $i, j \in \{0,\dots,h\}$ such that $z_i=z_j=0$, then we may assume that $\xi_i \geq0$ for all $i \in \{0,\dots,h\}$ such that $z_i=0$. Then there exists $s >0$ such that $|z_k|^2+ s\xi_k\geq 0$ for all $k \in \{0,\dots,h\}$, so $[t,\nu,z]$ is tall. If $[t,\nu,z]$ is tall, then there exists $s \neq 0$ such that $|z_k|^2 +s \xi_k \geq 0$ for all $k =0,\ldots,h$. In particular, for all $i,j \in \{0,\dots,h\}$ such that $z_i=z_j=0$, $s\xi_i \geq 0$ and $s \xi_j \geq 0$, so $\xi_i\xi_j \geq 0$.
    
    (2) To prove the criterion for the exceptional points, we compare the stabilizer groups of points in the same fiber. If we identify $H$ with its image under the embedding $\rho$ to $(S^1)^{h+1}$, the stabilizer group of $[t,\nu,z]$ is the intersection of $H$ with $\{(t_0,...,t_{h}): t_i =1 \textrm{ if } z_i \neq 0\}$. By the short exact sequence~\eqref{ses}, $H = \ker P$, so 
    \[\textrm{stab}_{[t,\nu,z]} \cong \{(t_0,...,t_{h}): \prod_{z_i=0} t_i^{\xi_i} = 1 \textrm{ and } t_i =1 \textrm{ if } z_i \neq 0 \}.\]

    If $[t,\nu,z]$ is a tall point, there exists a point $[t',\nu,w]$ and an $s \in \R \setminus\{0\}$ such that $|w_k|^2 -|z_k|^2 = s \xi_k$ for all $k=0,...,h$. Moreover, we can choose $s$ sufficiently small such that $|w_k| = |z_k|$ whenever $\xi_k=0$ and $w_k \neq 0$ whenever $\xi_k \neq 0$. Hence, $w_k=0$ if and only if $z_k=0$ and $\xi_k=0$, so $\prod_{w_i = 0} t_i^{\xi_i} =1$.
    \[\textrm{stab}_{[t',\nu,w]} \cong \{(t_0,...,t_{h}): t_i =1 \textrm{ if } z_i \neq 0 \textrm{ or } \xi_i \neq 0 \}.\]

    $[t,\nu,z]$ is exceptional if and only if $\textrm{stab}_{[t'\nu,w]} \subsetneq \textrm{stab}_{[t,\nu,z]}$. It is straightforward to check that $\textrm{stab}_{[t',w,\nu]} \subseteq \textrm{stab}_{[t,z,\nu]}$. The equality holds if and only if $\prod_{z_i=0} t_i^{\xi_i} = 1 $ implies $t_i =1$ for all $i$ such that $\xi_i \neq 0$. Notice that  $\prod_{z_i=0} t_i^{\xi_i} =  \prod_{z_i=0,\xi_i \neq0} t_i^{\xi_i}$, so the two stabilizer groups are equal if and only if one of the following holds:
    \begin{itemize}
        \item $z_i=0$ for all $i$ such that $\xi_i =0$, or
        \item there exists a unique $i \in \{0,\ldots,h\}$ such that $|\xi_i|=1$ and $z_i=0$.
    \end{itemize}
    
    Hence, $[t,\nu,z]$ is exceptional if and only if there exist $i,j$ such that $z_i=z_j=0$ and $\xi_i,\xi_j \neq 0$ or there exists $i$ such that $z_i=0$ and $|\xi_i|>1$. Therefore, $[t,\nu,z]$ is exceptional if and only if $\sum_{z_k=0} |\xi_k| >1$.

    If $[t,\nu,z]$ is a short point, then (1) implies that there exist $i,j \in \{0,\ldots,h\}$ such that $z_i=z_j=0$ and $\xi_i\xi_j<0$, so $\sum_{z_k=0} |\xi_k| \geq |\xi_i|+|\xi_j| \geq 2$. This concludes the proof.
\end{proof}

When the complexity one $T$-space is maximally short, the isotropy weights at any point are in general position.

\begin{lemma}\label{lem:maximally-short-weights}
    Let $(M,\omega,\Phi)$ be a maximally short complexity one $T$-space. Let $Y = T \times_H \fh^\circ \times \C^{h+1}$ be the local model associated to a point $p \in M$ with moment map $\Phi_Y([t,\nu,z]) = \Phi(p) + \frac{1}{2}\sum_{j=0}^{h} |z_j|^2 \eta_j + \nu$. Let $\xi_0,\ldots,\xi_h$ be the exponents of the defining monomial of $Y$. Then $\xi_0,\ldots,\xi_h$ are all nonzero. Equivalently, any $h$ weights are linearly independent.
\end{lemma}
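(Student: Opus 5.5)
We argue by contradiction. Suppose some exponent vanishes, say $\xi_0 = 0$. The plan is to locate, near $p$, a short point whose moment value lies in the interior of $\Phi(M)$, which is impossible for a maximally short space.

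First, the equivalence with the weight condition. By Lemma~\ref{lem:defining-monomial}, the linear relations among $\eta_0,\ldots,\eta_h$ are exactly the multiples of $\xi$. If $\xi_0=0$, then $\xi$ restricted to the indices $1,\ldots,h$ is a nontrivial relation among those $h$ weights. Conversely, a nontrivial relation among $h$ of the weights, say those omitting index $i$, extends by a zero coefficient at $i$ to a multiple of $\xi$, which forces $\xi_i=0$. So it suffices to prove that every $\xi_i$ is nonzero.

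Now assume $\xi_0=0$ and consider the points $q=[1,0,z]\in Y$ with $z_0\neq 0$ and $z_j=0$ for $j\ge 1$.
\begin{enumerate}
\item The tall/short dichotomy is decided by the remaining exponents. If $\xi_1,\ldots,\xi_h$ contain entries of both signs, then $q$ is short by Lemma~\ref{lem:tall/exc-criteria}(1).
\item Otherwise $\xi$ lies in $\Z^{h+1}_{\ge 0}$ or in $\Z^{h+1}_{\le 0}$, and $Y$ is tall by Lemma~\ref{lem:defining-monomial}. Then $p$ is tall. Since $p$ has a neighborhood where $\Phi$ agrees with $\Phi_Y$ (Theorem~\ref{thm:lnf}), every point near $p$ is tall. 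Hence the short set near $\Phi(p)$ is empty.
\end{enumerate}

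In both cases the goal is to show that $\partial\Phi(M)$ contains a short point near $\Phi(p)$ that is incompatible with the local structure. By Lemma~\ref{lem:local-cone}, near $\Phi(p)$ the image $\Phi(M)$ is the cone $\Phi(p)+\fh^\circ+\mathrm{cone}(\eta_0,\ldots,\eta_h)$. Because $\xi_0=0$, $\eta_0$ is not in the span of $\eta_1,\ldots,\eta_h$, so the face $\Phi(p)+\fh^\circ+\mathrm{cone}(\eta_1,\ldots,\eta_h)$ lies in the boundary of this cone. Pick a value $\alpha$ in the relative interior of that face, close to $\Phi(p)$. Maximal shortness requires $\alpha$ to be short.

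However, the fiber of $\Phi_Y$ over $\alpha$ is described by the same relation $\xi$ on the coordinates $z_1,\ldots,z_h$, with $z_0=0$ forced. If the fiber is a single orbit, then $\{z_1,\ldots,z_h\}$ has only one solution modulo the relation. I would then redo the analysis in the local model at a point of that fiber. The stabilizer there and its reduced model, with weights $\eta_0$ together with the weights whose coordinates vanish, give a relation that still has $\xi_0=0$. Alternatively, choose $\alpha$ to lie on the face but so that some $z_j$ with $\xi_j\neq0$ can be varied. If the $\xi_j$ with $j\ge1$ are all of one sign, then for $\alpha$ in the relative interior of the face, all of $z_1,\ldots,z_h$ can be perturbed along $\xi$, giving a non-singleton fiber. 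So $\alpha$ is a tall boundary value, contradicting $\Phi(M)_\short=\partial\Phi(M)$. If the signs are mixed, the shortness propagates to interior-adjacent values; I would instead exhibit a tall boundary value on another face, cut out by dropping an index $j$ with $\xi_j\neq0$.

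The main obstacle is this case analysis: checking carefully, using the cone description from Lemma~\ref{lem:local-cone}, that some boundary value near $\Phi(p)$ has a fiber containing a point with a free direction along $\xi$, hence is tall. I would first try the face spanned by $\fh^\circ$ and all $\eta_j$ with $j\neq0$, arguing that its relative interior corresponds to points with $z_0=0$ and all other $z_j\neq0$. Such points are tall by Lemma~\ref{lem:tall/exc-criteria}(1), since only $z_0$ vanishes.
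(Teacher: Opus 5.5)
Your proposal contains a correct proof, but only in its final paragraph, and that paragraph makes the earlier case analysis unnecessary. Suppose $\xi_0=0$. Then $\eta_0\notin W:=\mathrm{span}(\eta_1,\ldots,\eta_h)$. Take a functional on $\ft^*$ that vanishes on $W+\fh^\circ$ and is positive on $\eta_0$. It shows that $\Phi(p)+\fh^\circ+\mathrm{cone}(\eta_1,\ldots,\eta_h)$ lies on a supporting hyperplane of $\Phi_Y(Y)$, hence in $\partial\Phi(M)$ near $\Phi(p)$ by Lemma~\ref{lem:local-cone}. The points $[1,0,z]$ with $z_0=0$ and $z_j\neq 0$ for $j\geq 1$ map into this face. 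They are tall by Lemma~\ref{lem:tall/exc-criteria}(1) whatever the signs of $\xi_1,\ldots,\xi_h$, because the only vanishing coordinate is $z_0$. That is already the contradiction.

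Some of the surrounding material should go.
\begin{itemize}
\item Your opening plan is misdirected. A short point over $\Phi(M)^\circ$ is impossible in every complexity one space by \cite[Lemma 5.4]{KT01}. What contradicts maximal shortness is a \emph{tall} point over $\partial\Phi(M)$.
\item Your fallback for mixed signs cannot work. If you drop an index $j$ with $\xi_j\neq 0$, then $\{\eta_i\}_{i\neq j}$ is a basis, so $\fh^\circ+\sum_{i\neq j}\R_{>0}\eta_i$ is open in $\ft^*$ and is never a boundary face.
\end{itemize}
The one detail you should add is the passage from tall in $Y$ to tall in $M$. Take $z$ small, and choose the second orbit $[1,0,w]$ with $|w_k|^2=|z_k|^2+s\xi_k$ and $s$ small, as in the proof of Lemma~\ref{lem:tall/exc-criteria}(2). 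Then both orbits lie in the neighborhood where the local normal form holds, so the moment value is tall in $M$.

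The paper argues differently, splitting according to whether $p$ is tall or short.
\begin{itemize}
\item For tall $p$, maximal shortness puts $\Phi(p)$ in $\Phi(M)^\circ$, so $\Phi_Y$ is onto. Then $-\sum_j\eta_j$ lies in the cone, which produces a relation with all coefficients positive.
\item For short $p$, it examines the extreme rays of $C_H$. If $\eta_0$ is not extremal, $C_H$ is simplicial, and shortness over the facets $\mathrm{cone}(\eta_j : j\neq 0,i)$ gives $\xi_0\xi_i<0$ for each $i$. If every $\eta_j$ is extremal, it concludes that any $h$ weights are independent ``by definition of an edge''.
\end{itemize}
Your argument uses the hypothesis only in the form ``boundary values are short'' and treats every $p$ at once, so it is shorter. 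It is also more robust. The paper's extremal case does not follow from convex geometry alone. For $\xi=(0,1,-1,1,-1)$ the cone $C_H\subset\R^4$ is a cone over a square pyramid: all five weights span extreme rays, yet four of them are dependent. Excluding such models requires exactly your tall-points-on-a-facet observation. In exchange, the paper's route records some sign information along the way, such as $\xi_0\xi_i<0$ for all $i\neq 0$ when $\eta_0$ is not extremal.
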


\begin{proof}
By Lemma~\ref{lem:defining-monomial}, $\sum a_i\eta_i =0$ if and only if $a = \lambda \xi$ for some $\lambda\in \R$, so $\xi_0,\ldots,\xi_h$ are all nonzero if and only if any $h$ weights are linearly independent.

If $p$ is a tall point, then by definition $\Phi(p)\in \Phi(M)^\circ$, so Lemma~\ref{lem:local-cone} implies that $\Phi_Y$ is surjective. Hence, $\fh^* = \sum_{j=0}^h \R_{\geq 0} \eta_j$. In particular, there exist $a_j \geq 0$ such that $\sum a_j \eta_j = \sum -\eta_j$, so $\sum (a_j+1)\eta_j=0$. By Lemma~\ref{lem:defining-monomial}, $\xi_0,\ldots,\xi_h$ are all nonzero. (c.f.~\cite[Lemma 5.2]{KT01})

If $p$ is a short point, then $\Phi(p) \in \partial\Phi(M)$, so Lemma~\ref{lem:local-cone} implies that $\partial \Phi_Y(Y)$ agrees with $\partial \Phi(M)$ locally around $\Phi(p)$, hence any $[t,\nu,z] \in \Phi_Y^{-1}(\partial \Phi_Y(Y))$ is a short point. Let $C_H: = \Phi(p) + \sum_{j=0}^h \R_{\geq 0} \eta_j$. Then $\partial \Phi_Y(Y) = \partial C_H \times \fh^\circ$. By~\cite[Lemma 5.4]{KT01}, $\Phi_Y$ is proper, so $C_H$ is a proper cone. There are two cases: (i) each of $\eta_j$ directs an edge of $C_H$ or (ii) without loss of generality, $\eta_0$ does not direct an edge. In case (i), by definition of an edge, any $h$ weights are linearly independent. In case (ii), $C_H$ is a simple cone, so for any $i\in\{1,\ldots,h\}$, $\Phi(p)+\sum_{j=1, j\neq i}^h \R_{\geq 0} \eta_j$ is a facet of $C_H$. Hence, for $z \in \C^{h+1}$ with $z_0=z_i=0$ and $z_j = 1$ for all $j \neq 0,i$, $\Phi_Y([1,0,z]) \in \partial \Phi_Y(Y)$, so $[1,0,z]$ is a short point. By item (1) of Lemma~\ref{lem:tall/exc-criteria}, $\xi_0\xi_i<0$. Hence, $\xi_0,\ldots,\xi_h$ are all nonzero.
\end{proof}

\begin{remark}
    In particular, Lemma~\ref{lem:maximally-short-weights} implies that any maximally short complexity one $T$-space with dimension at least $6$ are GKM $T$-manifolds~\cite{GKM98}. In fact, in dimension six, a complexity one $T^2$-space is GKM if and only if it is maximally short. This is not true in higher dimensions. For example, take any maximally short complexity one $T^2$-space $(M,\omega,\Phi)$ and consider its product with $(S^2,d\theta \wedge dh, h)$. One can verify that the product is still a GKM manifold, but not maximally short anymore, because for any $\alpha \in \Phi(M)^\circ$, the reduced space at the level $(\alpha,1) \in \Phi(M) \times \{1\} \subset \partial (\Phi(M) \times [-1,1])$ is not a singleton. 
\end{remark}

Similarly, for each fixed point whose image lies in a short face, the number of its isotropy weights parallel to that face is the same as the dimension of the face.
\begin{lemma}\label{lem:number-of-weights-in-a-short-face}
    Let $(M,\omega,\Phi)$ be a connected complexity one $T$-space with proper moment map. Let $F \subset \Phi(M)_{\short}$ be a face of the moment image. Let $p \in \Phi^{-1}(F)$ be a fixed point. Then the number of isotropy weights that are parallel to $F$ is the same as $\dim(F)$.
\end{lemma}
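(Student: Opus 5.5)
Since $p$ is a fixed point, its stabilizer is all of $T$, so $\fh^\circ=0$ and the local model is $Y=\C^{n+1}$ with $n=\dim T$. Its moment map is $\Phi_Y(z)=\Phi(p)+\frac12\sum_{j=0}^n |z_j|^2\eta_j$, and the defining monomial has exponents $\xi\in\Z^{n+1}$ as in Lemma~\ref{lem:defining-monomial}. The plan is to compare $F$ with a face of the cone $C=\Phi(p)+\sum_j\R_{\geq0}\eta_j$ and then use the tallness criterion of Lemma~\ref{lem:tall/exc-criteria}(1).

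\emph{Step 1 (localization).} By Lemma~\ref{lem:local-cone}, $\Phi(M)$ agrees with $C$ on a neighborhood $U$ of $\Phi(p)$. Hence $F\cap U=F_C\cap U$ for a face $F_C$ of $C$ containing $\Phi(p)$, and $\dim F_C=\dim F$.

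\emph{Step 2 (identifying the weights).} Choose a linear functional $\lambda$ with $\lambda\geq 0$ on $C-\Phi(p)$ and $F_C-\Phi(p)=(C-\Phi(p))\cap\ker\lambda$. Let $J=\{j:\eta_j \text{ is parallel to } F\}$.
\begin{itemize}
\item If $\eta_j$ lies in the span of $F_C-\Phi(p)$, then $\lambda(\eta_j)=0$. Since $\eta_j\in C-\Phi(p)$, it follows that $\eta_j\in F_C-\Phi(p)$.
\item Conversely, a nonnegative combination $\sum a_j\eta_j$ with $\lambda(\sum a_j\eta_j)=0$ must have $a_j=0$ whenever $\lambda(\eta_j)>0$.
\end{itemize}
Together these give $F_C=\Phi(p)+\sum_{j\in J}\R_{\geq0}\eta_j$. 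In particular $\{\eta_j\}_{j\in J}$ spans a space of dimension $\dim F$, so $|J|\geq\dim F$.

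\emph{Step 3 (independence; the main point).} It remains to show that $\{\eta_j\}_{j\in J}$ is linearly independent. Suppose instead there is a nontrivial relation $\sum_{j\in J}a_j\eta_j=0$. By Lemma~\ref{lem:defining-monomial}, $a=\mu\xi$ with $\mu\neq0$, so $\xi_k=0$ for every $k\notin J$.

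Take $z$ with $z_j=\epsilon\neq0$ for $j\in J$ and $z_k=0$ for $k\notin J$. Then $\Phi_Y(z)$ is a strictly positive combination of the generators of $F_C$, so it lies in $\relint F_C$. For small $\epsilon$ it also lies in $U$, hence in $F\subset\Phi(M)_\short$.

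On the other hand, the only vanishing coordinates of $z$ are the indices $k\notin J$, and all of them have $\xi_k=0$. So the condition $\xi_i\xi_k\geq0$ holds trivially, and Lemma~\ref{lem:tall/exc-criteria}(1) says $z$ is tall in $Y$. Thus the $\Phi_Y$-fiber through $z$ contains at least two orbits.

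\emph{Step 4 (transfer to $M$).} By Theorem~\ref{thm:lnf} (shrinking $\epsilon$ so the relevant orbits lie in the neighborhood where the normal form holds), these two orbits correspond to two distinct $T$-orbits of $M$ in $\Phi^{-1}(\Phi_Y(z))$. Hence $\Phi_Y(z)$ is a tall value of $\Phi$, contradicting $\Phi_Y(z)\in F\subset\Phi(M)_\short$. Therefore $\{\eta_j\}_{j\in J}$ is independent and $|J|=\dim F$.

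I expect this transfer to be the delicate step. It works because tallness only needs extra orbits to exist, and the extra orbits found in the model near $p$ are genuine orbits of $M$. Other components of the fiber far from $p$ can only add orbits, so they cannot undo tallness.
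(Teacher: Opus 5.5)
Your proposal is correct and follows essentially the same route as the paper. Both arguments use Lemma~\ref{lem:local-cone} to identify $F$ near $\Phi(p)$ with the face of the weight cone generated by the weights parallel to $F$, and both rule out a linear dependence among those weights by exhibiting two distinct orbits near $p$ in a single fiber over a point of $F$. The only difference is cosmetic: the paper builds the two orbits directly from the positive and negative parts of the relation, whereas you pass through the defining monomial and Lemma~\ref{lem:tall/exc-criteria}(1). You also spell out the transfer back to $M$, which the paper leaves implicit.
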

\begin{proof}
    Let $Y \cong \C^{n+1}$ be the local model associated to $p$ with moment map $\Phi_Y(z) = \Phi(p)+\frac{1}{2}\sum_{j=0}^n|z_j|^2\eta_j$. By Lemma~\ref{lem:local-cone}, there exists a neighborhood $U$ of $\Phi(p)$ such that $\Phi(M) \cap U = (\Phi(p)+ \sum_{j=0}^n \R_{\geq 0} \eta_j) \cap U$. After reordering the weights, we may assume $F \cap U = (\Phi(p) + \sum_{j=0}^k \R_{\geq 0} \eta_i) \cap U$. Since $\eta_0,\ldots,\eta_n$ span the $n$-dimensional vector space $\ft^*$, $k$ is either $\dim(F)$ or $\dim(F) -1$. If $k = \dim (F)$, then $\eta_0,\ldots,\eta_k$ are linearly dependent, so there exist $a_0,\ldots,a_k \in \R$, not all zero, such that $\sum_{j=0}^k a_j \eta_j=0$. Thus, $\sum_{a_j >0} a_j\eta_j = -\sum_{a_j <0} a_j \eta_j$. Choose $z \in \C^{n+1}$ such that $z_j = \sqrt{a_j}$ for all $j$ such that $a_j >0$ and $z_j=0$ otherwise. Choose $w \in \C^{n+1}$ such that $w_j = \sqrt{-a_j}$ for all $j$ such that $a_j <0$ and $w_j=0$ otherwise. Then for $\epsilon>0$ sufficiently small, $\Phi_Y(\epsilon z) = \Phi_Y(\epsilon w) \in F$ but $z,w$ are not in the same $T$-orbit, which is a contradiction to the assumption that $F \subset \Phi(M)_{\short}$. Hence, $k=\dim(F)-1$.
\end{proof}

The lemma above and ~\cite[Remark 2.5 and Lemma 2.6]{KT20} together imply the following.

\begin{corollary}\label{cor:preimage-of-a-face}
    Let $(M,\omega,\Phi)$ be a connected complexity one $T$-space with proper moment map. Let $F \subset \Phi(M)$ be a face of the moment image. Let $H \subset T$ be the connected subgroup such that the affine span of $F$ is a translation of $\fh^\circ$. If $F \subset \Phi(M)_{\short}$, then $\Phi^{-1}(F)$ is a connected symplectic toric $(T/H)$-manifold. Otherwise, $\Phi^{-1}(F)$ is a connected complexity one $(T/H)$-space. 
\end{corollary}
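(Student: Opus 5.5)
The statement is a corollary of Lemma~\ref{lem:number-of-weights-in-a-short-face} together with \cite[Remark 2.5 and Lemma 2.6]{KT20}. My plan is to first show that $N:=\Phi^{-1}(F)$ is a connected, closed, $T$-invariant symplectic submanifold on which $H$ acts trivially. Then I will compute its dimension, separately for short and for tall faces.

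\textbf{Structure of $N$.} Since $F$ is a face of $\Phi(M)$, there is $X\in\fh$ such that $\langle\Phi,X\rangle$ attains its extremum on $\Phi(M)$ exactly along $\Phi^{-1}(\text{affine span of }F)\cap\Phi(M)$. I will argue, via Lemma~\ref{lem:local-cone} and the local normal form Theorem~\ref{thm:lnf}, that $N$ is a union of components of the fixed set $M^{H}$. This is the content of \cite[Remark 2.5, Lemma 2.6]{KT20}.
\begin{itemize}
\item In a local model at $p\in N$, the face $F$ corresponds to the face of the cone $\Phi(p)+\sum\R_{\geq0}\eta_j+\fh_p^\circ$ spanned by the weights lying in $\fh^\circ$.
\item Hence $N$ is locally the subspace where all coordinates with weights not in $\fh^\circ$ vanish. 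This subspace is exactly the $H$-fixed locus, and it is symplectic.
\item Connectedness of $N$ follows from connectedness of fibres of proper moment maps (Atiyah--Guillemin--Sternberg/Lerman--Meinrenken--Tolman--Woodward convexity), which \cite{KT20} supplies.
\end{itemize}
Consequently $T/H$ acts on $N$ in a Hamiltonian fashion, with moment map $\Phi|_N$ translated into $\fh^\circ\cong(\ft/\fh)^*$. This action is effective because $N$ has points whose stabilizer is exactly $H$. Moreover $\dim(T/H)=\dim F$.

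\textbf{Dimension count.} It suffices to check $\dim N$ at a single point, for which I pick a $T$-fixed point $p\in N$. Such a point exists because $M$ is compact; in the proper case I would instead use a point with minimal-dimensional orbit and apply the same argument in the local model $T\times_{H_p}\fh_p^\circ\times\C^{h+1}$. In the fixed-point model $\C^{n+1}$, the tangent space $T_pN$ is the sum of the weight spaces whose weights are parallel to $F$.
\begin{itemize}
\item \emph{Short face.} If $F\subset\Phi(M)_{\short}$, Lemma~\ref{lem:number-of-weights-in-a-short-face} gives exactly $\dim F$ such weights. Hence $\dim N=2\dim F=2\dim(T/H)$, and $N$ is symplectic toric.
\item \emph{Tall face.} Otherwise, the proof of that lemma shows the number $k+1$ of weights parallel to $F$ is $\dim F$ or $\dim F+1$. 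The count $\dim F$ would force every point over $\relint F$ near $\Phi(p)$ to be short, since the weights are then independent and give injective fibres. So the count is $\dim F+1$ and $\dim N=2\dim(T/H)+2$, i.e.\ complexity one.
\end{itemize}

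\textbf{Main obstacle.} I expect the hardest part to be the tall case in the genuinely non-compact proper setting, where fixed points need not exist and shortness is not locally constant along $F$. There I will rely on the dichotomy of \cite[Lemma 2.6]{KT20}: a connected Hamiltonian $T/H$-space with a proper moment map onto a $\dim F$-dimensional set has complexity determined by generic fibres. Generic fibres over $\relint F$ are circles-worth of orbits if some point is tall, and single orbits if all points are short.
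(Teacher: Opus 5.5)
Your route is the paper's: its proof is just Lemma~\ref{lem:number-of-weights-in-a-short-face} combined with \cite[Remark 2.5 and Lemma 2.6]{KT20}. Your structural part is fine. Locally, $\Phi^{-1}(F)$ is the set where the coordinates with weights outside $\fh^\circ$ vanish, so $N$ is open and closed in $M^H$. It is connected because $\Phi$ is a closed map with connected fibres.

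Two steps, however, fail as written. First, the claim that $T/H$ acts effectively ``because $N$ has points whose stabilizer is exactly $H$'' is unjustified, and it is false in general. Restrict the standard $T^3$-action on $\C P^3$ along the injection $(s,t)\mapsto(s^2,t,st)$. This is an effective complexity one $T^2$-space whose moment image is the quadrilateral with vertices $(0,0),(2,0),(1,1),(0,1)$. The edge $F\subset\{y=0\}$ is short, $H=\{1\}\times S^1$, and $N=\{z_1=z_2=0\}\cong\C P^1$. Yet $(-1,1)\notin H$ fixes $N$ pointwise, so $T/H$ acts on $N$ with weight $2$. What you can actually prove is a Hamiltonian $T/H$-action with finite generic stabilizer, of complexity $0$ or $1$. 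Equivalently, $N$ is toric for $T/K$, where $K\supseteq H$ is the generic stabilizer of $N$. This suffices downstream, since the paper only uses that $\Phi^{-1}(F)$ is connected of dimension $2\dim F$.

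Second, your tall case has a local-to-global gap, which is really the ``main obstacle'' you defer to \cite{KT20}. A count of $\dim F$ at the fixed point $p$ only makes the values of $F$ near $\Phi(p)$ short. That does not contradict a tall value elsewhere on $F$. (Also, tall reduced spaces are $2$-dimensional, not ``circles-worth''.)

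The fix is to do the count at a point $q\in\Phi^{-1}(\alpha)$ with $\alpha\in F$ tall, and let $H_q$ be the stabilizer of $q$.
\begin{enumerate}
\item Since $\Phi^{-1}(\alpha)$ is connected and contains more than one orbit, $T\cdot q$ is not open in it.
\item Hence the local model at $q$ contains fibre points off $T\cdot[1,0,0]$. This forces points $[t,0,z]$ with $z\neq0$ and $\sum_j|z_j|^2\eta_j=0$.
\item These points lie over $F$, so $z_j\neq0$ only if $\eta_j\in\fh^\circ$.
\item Thus the weights $\{\eta_j\}_{j\in J_q}$ at $q$ lying in $\fh^\circ$ are linearly dependent. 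All relations among the weights are multiples of $\xi$, so $|J_q|=\dim F-\dim\fh_q^\circ+1$.
\item Therefore $\dim N=2\dim\fh_q^\circ+2|J_q|=2\dim F+2$.
\end{enumerate}

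Likewise, the proof of Lemma~\ref{lem:number-of-weights-in-a-short-face} runs verbatim at any $q\in N$ in the model $T\times_{H_q}\fh_q^\circ\times\C^{h+1}$. So the short case needs no fixed point either. Since $N$ is a connected manifold, one point suffices in each case, and this also settles the merely proper setting without compactness.
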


We end this section with a quick review of Duistermaat-Hackman measure~\cite{DH82,DH83} and the wall-crossing formula~\cite{GLS}. We will need these results in the proof of Theorem~\ref{thm:n-1-skeleton} in a later section.

Let $(M,\omega,\Phi)$ be a $2n$–dimensional Hamiltonian $T$–space. The Liouville measure on $M$ is given by integrating the volume form $\frac{\omega^n}{(2\pi)^nn!}$ with respect to the symplectic orientation. The {\bf Duistermaat–Heckman measure} is the pushforward of the Liouville measure by the moment map. Specifically, if $U \subset \ft^*$ is a Borel set, its Duistermaat-Heckman measure is given by 
\[m(U) = \int_{\Phi^{-1}(U)} \frac{\omega^n}{(2\pi)^nn!}.\]

\begin{theorem}[\cite{DH82}]
    Let $(M,\omega,\Phi)$ be an effective Hamiltonian $T$-space. There exists a function $f: \ft^* \to \R$ such that
    \begin{itemize}
        \item $f$ is a polynomial of degree at most $\frac{1}{2}\dim (M) - \dim (T)$ on each connected component of regular values of $\Phi$, and
        \item for any Borel set $U \subset \ft^*$, $m(U) = \int_U f d\lambda$, where $\lambda$ is the Lebesgue measure on $\ft^*$.
    \end{itemize}
\end{theorem}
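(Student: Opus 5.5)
The strategy is the standard localization argument. First reduce to a local model, then compute the pushforward there explicitly, and finally patch the local answers together.

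\emph{Step 1: absolute continuity and a locally finite decomposition.} Since the action is effective, the principal stabilizer is trivial. Hence the set $M_0$ of points with discrete stabilizer is open and dense, and the complement of $M_0$ is a finite (locally finite, by properness or compactness of support) union of submanifolds $M^K$ of positive codimension. On $M_0$ the map $d\Phi$ is surjective, so $\Phi|_{M_0}$ is a submersion. The complement of $M_0$ has Liouville measure zero, so $m$ is the pushforward of the Liouville measure of $M_0$ under a submersion. By the coarea/fibre integration formula, $m=f\,d\lambda$ with
\[
f(\alpha)=\int_{\Phi^{-1}(\alpha)\cap M_0}\frac{\omega^n}{(2\pi)^n n!\, d\lambda}.
\]
This function is locally integrable and smooth on the regular values. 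It remains to show that $f$ is polynomial of degree at most $k:=n-\dim T$ on each connected component $\mathcal C$ of regular values.

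\emph{Step 2: reduced spaces and the Duistermaat--Heckman theorem.} For a regular value $\alpha\in\mathcal C$, the fibre $Z_\alpha=\Phi^{-1}(\alpha)$ is a (compact, by properness) manifold on which $T$ acts locally freely. So $M_\alpha=Z_\alpha/T$ is a symplectic orbifold of dimension $2k$. Choosing a connection form on $Z_{\alpha_0}\to M_{\alpha_0}$, the coisotropic embedding (local normal form for the submanifold $Z_{\alpha_0}$) identifies $\Phi^{-1}(U)$, for a convex neighbourhood $U\subset\mathcal C$ of $\alpha_0$, with $Z_{\alpha_0}\times U$. Under this identification the reduced forms satisfy
\[
[\omega_\alpha]=[\omega_{\alpha_0}]+\langle c,\alpha-\alpha_0\rangle,
\]
where $c\in H^2(M_{\alpha_0})\otimes\ft$ is the Chern class of the orbibundle. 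Computing the Liouville form in these coordinates gives
\[
f(\alpha)=\frac{1}{|\Gamma|}\int_{M_{\alpha_0}}\frac{[\omega_\alpha]^k}{(2\pi)^k k!},
\]
with $|\Gamma|$ the generic stabilizer order, which equals $1$ by effectiveness and connectedness. This expression is a polynomial of degree at most $k$ in $\alpha$ on $U$.

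\emph{Step 3: globalize on $\mathcal C$.} The set $\mathcal C$ is connected, and $f$ agrees with a polynomial of degree at most $k$ near every point. Two polynomials that agree on an open set coincide. A standard connectedness argument therefore shows that $f$ is a single polynomial on all of $\mathcal C$. Define $f$ arbitrarily (say $0$) on the measure-zero set of critical values and outside $\Phi(M)$. The set of critical values has measure zero by Sard, and in fact it is a locally finite union of images of the lower-dimensional $M^K$, i.e.\ of affine hyperplane pieces. This does not affect $m(U)=\int_U f\,d\lambda$.

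The main obstacle is Step 2. One must justify the normal form $\Phi^{-1}(U)\cong Z_{\alpha_0}\times U$ with the explicit formula $\omega=\pi^*\omega_{\alpha_0}+d\langle\Phi-\alpha_0,\theta\rangle$ (via a Moser/coisotropic embedding argument), which requires $U$ to consist of regular values. One must also handle the orbifold case carefully when stabilizers are finite but nontrivial. My first approach is to pass to the local model from Theorem~\ref{thm:lnf} near each orbit and do the fibre integration there, where the variation is manifestly affine in $\alpha$.
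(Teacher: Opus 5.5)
The paper gives no proof of this statement; it quotes it from \cite{DH82}. Your outline is the classical Duistermaat--Heckman argument: a normal form $\Phi^{-1}(U)\cong Z_{\alpha_0}\times U$ near a compact regular level, affine variation of the reduced forms through the curvature of $Z_{\alpha_0}\to M_{\alpha_0}$, density equal to reduced volume, and then connectedness of the chamber. So it is correct in substance. Three remarks follow.

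\emph{Properness.} You tacitly use properness: for compact regular fibres, for local finiteness of the $M^K$, and for the normal form along all of $Z_{\alpha_0}$. This hypothesis is genuinely necessary, and it is missing from the statement as quoted. Take $S^1$ acting on the first factor of $\C^2$ with $\Phi=\tfrac12|z|^2$, and restrict to the invariant connected open set $\{|w|^2<g(|z|^2)\}$ with $g>0$. Its Duistermaat--Heckman density on the single chamber $(0,\infty)$ is $g(2\alpha)/2$, which need not be polynomial.

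\emph{The orbit-local fallback.} The fallback in your last sentence would not work. At a point of a regular fibre the stabilizer is finite, so $\fh=0$ and the orbit-local model has $\Phi_Y([t,\nu,z])=\Phi(p)+\nu$. The reduced spaces of that model do not vary with $\nu$ at all. The affine dependence on $\alpha$ comes only from the global twisting (the Chern class) of $Z_{\alpha_0}\to M_{\alpha_0}$, which is invisible near a single orbit. You really do need the coisotropic normal form along the whole level set, as in your Step 2.

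\emph{Orbifold bookkeeping.} Once you have that normal form, you can drop the orbifold discussion and the claim $|\Gamma|=1$. Let $\theta_1,\ldots,\theta_d$ be the components of the connection form $\theta$ in a basis of $\ft$, with $d=\dim T$. From $\omega=\pi^*\omega_{\alpha_0}+d\langle\mu-\alpha_0,\theta\rangle$, only one term of $\omega^n/n!$ survives. Up to a normalizing constant this gives $f(\mu)=\int_{Z_{\alpha_0}}\frac{1}{k!}\bigl(\pi^*\omega_{\alpha_0}+\langle\mu-\alpha_0,d\theta\rangle\bigr)^k\wedge\theta_1\wedge\cdots\wedge\theta_d$. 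This is visibly a polynomial of degree at most $k$ in $\mu$.
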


The function $f$ is called the {\bf Duistermaat-Heckman polynomial}. Duistermaat-Heckman polynomials can be computed combinatorially by the ABBV localization formula in equivariant cohomology (see~\cite{AB84,BV82}). We can also compute these polynomials by the wall-crossing formula introduced by Guillemin-Lerman-Sternberg~\cite{GLS}. We recall a special case that we will use later and refer the readers to Section 3 in~\cite{GLS} for the general case. 

Let $\De_+,\De_-$ be two adjacent connected components of regular values of $\Phi$ such that $\De_+,\De_-$ are separated by a wall\footnote{A wall is a facet of both $\overline{\De}_+$ and  $\overline{\De}_-$. The closure of each connected component of regular values of $\Phi$ inside $\Phi(M)$ is a convex polyhedral set by~\cite[Example 3.7]{GLS}. See Theorem~\ref{thm:convexity-regular-values} for a complete proof.} $W \subset \Phi(M)$ oriented so that the normal vector $\xi \in \ft$ to $W$ is pointing out of $\De_-$ and into $\De_+$. Let $L_\xi: \ft^* \to \R$ denote the linear functional $\langle \cdot,\xi \rangle$. Notice that $L_\xi$ is constant on $W$. Assume that $X:=\Phi^{-1}(W)$ is connected and $\dim X=2 \dim W$. Then $T/S^1_\xi$ acts effectively on $X$, where $S^1_\xi$ is the sub-circle whose Lie algebra is $\R \xi$. For any $T$-fixed point $p \in X$, let $\alpha_1^p,\ldots,\alpha_n^p$ denote the isotropy weights at $p$. After possibly reordering these weights, we can assume that $\langle \alpha_i^p,\xi\rangle =0$ for $i \geq m+1$, where $m = n -\frac{1}{2} \dim(X)$. Moreover, since the $\langle \alpha_i^p,\xi\rangle$'s are the
weights of the isotropy representation of $S^1_\xi$ on the normal bundle of $X$ and $X$ is connected, $\langle \alpha_i^p,\xi\rangle = \langle \alpha_i^q,\xi\rangle$ for any two $T$-fixed points $p,q \in X$. Hence, we will call $\langle \alpha_i,\xi \rangle:= \langle\alpha_i^p,\xi \rangle$ the $S^1_\xi$-isotropy weights of $X$. Finally, we define a constant which normalizes the push-forward of the Liouville measure on $X$. Let $\alpha \in \ft^*$ be any vector such that $L_\xi(\alpha) =1$. Define 
\[c^p_\alpha: = |\det(\alpha^p_{m+1},\ldots,\alpha^p_n,\alpha)|.\]

\begin{lemma}
    Let $p,q \in X$ be $T$-fixed points and let $\alpha,\alpha' \in \ft^*$ such that $L_\xi(\alpha) =L_\xi(\alpha') =1$. Then $c^p_{\alpha} = c^q_{\alpha'}$.
\end{lemma}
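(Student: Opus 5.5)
The claim splits into two independent pieces: that $c^p_\alpha$ does not depend on $\alpha$ for a fixed $p$, and that it does not depend on the fixed point $p\in X$. Together these give $c^p_\alpha = c^p_{\alpha'} = c^q_{\alpha'}$.

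\emph{Independence of $\alpha$.} The weights $\alpha^p_{m+1},\ldots,\alpha^p_n$ all satisfy $\langle \alpha^p_i,\xi\rangle = 0$, so they lie in the hyperplane $\ker L_\xi = \xi^\circ \subset \ft^*$. If $L_\xi(\alpha)=L_\xi(\alpha')=1$, then $\alpha'-\alpha\in \ker L_\xi$. We have $\dim X = 2\dim W$ and $\dim W = \dim T - 1$, so $n-m = \frac12\dim X = \dim T - 1$. Hence the vectors $\alpha^p_{m+1},\ldots,\alpha^p_n$ are exactly $\dim\ker L_\xi$ vectors in $\ker L_\xi$. There are two cases.
\begin{itemize}
\item If they span $\ker L_\xi$, then $\alpha'-\alpha$ is a linear combination of them. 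By multilinearity and the alternating property, $\det(\alpha^p_{m+1},\ldots,\alpha^p_n,\alpha') = \det(\alpha^p_{m+1},\ldots,\alpha^p_n,\alpha)$.
\item If they do not span $\ker L_\xi$, both determinants vanish.
\end{itemize}
In either case the absolute values agree, so we may fix one $\alpha$ once and for all.

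\emph{Independence of $p$.} The $T$-fixed point $p$ is fixed by the action of $T/S^1_\xi$ on $X$. The weights $\alpha^p_{m+1},\ldots,\alpha^p_n$ are the isotropy weights of $T$ on $T_pX$, and they annihilate $\xi$, so they are the isotropy weights of $T/S^1_\xi$ at $p$ in $X$, viewed inside $(\ft/\R\xi)^* = \ker L_\xi$. Now $X$ is a connected Hamiltonian $T/S^1_\xi$-space of dimension $2\dim(T/S^1_\xi)$, i.e. a symplectic toric manifold, and the action is effective. Effectiveness at a fixed point of a toric manifold forces the isotropy weights to form a $\Z$-basis of the weight lattice $\ell^*_{T/S^1_\xi} = \ell^*\cap \ker L_\xi$; this is the local normal form Theorem~\ref{thm:lnf} with $H=T/S^1_\xi$ and an effective slice representation $\C^{n-m}$. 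Choose $\alpha$ to be a lattice vector in $\ell^*$ with $L_\xi(\alpha)=1$, which exists because $\xi$ may be taken primitive. Then $\{\alpha^p_{m+1},\ldots,\alpha^p_n,\alpha\}$ is a $\Z$-basis of $\ell^*$. Its determinant, computed with respect to a fixed basis of $\ell^*$ (the normalization implicit in $\det$), is therefore $\pm1$ for every fixed point $p$. This gives $c^p_\alpha = c^q_\alpha$.

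\emph{Main obstacle.} The step needing care is justifying that the weights at $p$ form a lattice basis of $\ell^*\cap\ker L_\xi$. This requires effectiveness of the $T/S^1_\xi$-action on $X$ (stated in the setup) and the fact that on a connected manifold effectiveness is detected at every fixed point through its slice representation. Should the paper's $\det$ be taken with respect to the inner product rather than a lattice basis, the argument is unchanged: the determinant of any $\Z$-basis of $\ell^*$ is the same constant up to sign, namely the covolume of $\ell^*$.
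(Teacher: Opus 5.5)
Your proof is correct and follows essentially the same route as the paper. Effectiveness of $T/S^1_\xi$ on $X$ makes $\alpha^p_{m+1},\ldots,\alpha^p_n$ a $\Z$-basis of $\ell^*\cap\ker L_\xi$ at every fixed point. Independence of $\alpha$ is then multilinearity, and independence of $p$ is unimodularity: the paper relates $p$ and $q$ by a matrix in $\GL(\ell^*)$ fixing $\alpha'$, whereas you evaluate the determinant directly as $\pm$ the covolume of $\ell^*$.

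The one step you leave unjustified, taking $\xi$ primitive, is harmless. $\R\xi$ is rational because $S^1_\xi$ is a circle, and replacing $\xi$ by $k\xi$ replaces each admissible $\alpha$ by $\alpha/k$, which multiplies every $c^p_\alpha$ by the same factor $1/|k|$.
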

    
\begin{proof}
    Since $n-m = \frac{1}{2}\dim X = \dim W = \dim T-1$ and since $T/S^1_\xi$ acts effectively on $X$, the isotropy weights $\alpha_{m+1}^p,\ldots,\alpha_n^p$ form a basis of the primitive lattice $\ell^* \cap \ker(L_\xi)$. Hence, $\alpha' = \alpha + \sum_{i=m+1}^n x_i \alpha_i^p$ for some $x_{m+1},\ldots,x_n \in \R$ and it follows that $c^p_{\alpha} = c^p_{\alpha'}$. Similarly, the isotropy weights $\alpha_{m+1}^q,\ldots,\alpha_n^q$ form a basis of $\ell^* \cap \ker(L_\xi)$. Hence, there exists a matrix $A \in \GL(\ell^*)$ such that $A\alpha_i^p = \alpha_i^q$ and $A\alpha' = \alpha'$. Therefore, $c^p_{\alpha} = c^q_{\alpha'}$.
\end{proof}

We define $c_{\alpha}: =c^p_{\alpha}$.  Now, we are ready to state the wall-crossing formula. See~\cite[Theorem 3.2.10, Equation(3.90)]{GLS} for detailed discussion.
\begin{theorem}[\cite{GLS}]\label{thm:wall-crossing}
    Let $(M,\omega,\Phi)$ be a $2n$-dimensional Hamiltonian $T$-space with proper moment map. Let $f$ be the Duistermaat-Heckman polynomial. Let $\De_+,\De_-$ be two adjacent connected components of regular values of $\Phi$ such that $\De_+,\De_-$ are separated by a wall $W \subset \Phi(M)$ oriented so that the normal vector $\xi \in \ft$ to $W$ is pointing out of $\De_-$ and into $\De_+$. Let $f_\pm$ be polynomials such that $f|_{\De_\pm} =f_{\pm}|_{\De_\pm}$. Let $L_\xi:\ft^* \to \R$ denote the linear functional $\langle \cdot, \xi \rangle$. Fix $\alpha \in \ft^*$ such that $L_\xi(\alpha) =1$. Assume that $\Phi^{-1}(W)$ is connected of dimension $2 \dim W$. Let $p \in \Phi^{-1}(W)$ be a $T$-fixed point with isotropy weights $\alpha_1,\ldots,\alpha_n \in \ft^*$ such that $\langle \alpha_i, \xi \rangle =0$ for $i \geq m+1$, where $m = n - \dim (W)$. Then,
    \[f_+ - f_- = \frac{1}{(m-1)!} \frac{1}{\prod_{i=1}^m L_\xi(\alpha_i)}\frac{(L_\xi-L_\xi(W))^{m-1}}{|\det(\alpha_{m+1},\ldots,\alpha_n,\alpha)|}.\]
\end{theorem}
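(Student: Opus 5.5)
The identity is local near $W$, and I would prove it by reducing it to a computation in a linear model. The plan is to show that the jump $f_+-f_-$ depends only on a neighbourhood of $X:=\Phi^{-1}(W)$, replace that neighbourhood by a normal-bundle model, and then read off the jump as the wall-crossing of a linear circle action on $\C^m$ with weights $L_\xi(\alpha_1),\dots,L_\xi(\alpha_m)$. All of this would be weighted by the (constant) Duistermaat--Heckman density of $X$ along $W$.

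\textbf{Step 1: localize to $X$.} Fix a point $\beta_0$ in the relative interior of $W$ and a small ball $B$ around it. Since $\Phi$ is proper, $\Phi^{-1}(B)$ is contained in any prescribed invariant neighbourhood of $X\cap\Phi^{-1}(\overline B)$ once $B$ is small. Hence the Duistermaat--Heckman measure on $B$, and so $f_\pm$ on $B\cap\De_\pm$, is determined by such a neighbourhood. Because $f_\pm$ are polynomials, it suffices to verify the identity on $B\cap\De_\pm$.

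\textbf{Step 2: normal form near $X$.} $X$ is a connected symplectic submanifold fixed by $S^1_\xi$, and $T/S^1_\xi$ acts on it effectively with $\dim X=2\dim(T/S^1_\xi)$. So $(X,\Phi|_X)$ is a symplectic toric $(T/S^1_\xi)$-manifold with moment image in $W$. Its Duistermaat--Heckman measure is therefore Lebesgue measure on its image, normalized by the covolume of the lattice spanned by $\alpha_{m+1},\dots,\alpha_n$ in $\ker L_\xi$. Measured against Lebesgue measure on $\ft^*$ using a transversal vector $\alpha$, this density is exactly $1/c_\alpha$; the lemma preceding the theorem shows it is independent of the choices of $\alpha$ and $p$.

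By the equivariant symplectic neighbourhood theorem (via minimal coupling), a neighbourhood of $X$ is equivariantly symplectomorphic to a neighbourhood of the zero section in the normal bundle $N\to X$. Here $N$ is a $T$-equivariant Hermitian bundle of rank $m$ on which $S^1_\xi$ acts fibrewise with the constant weights $w_i:=L_\xi(\alpha_i)$, $i\le m$. In the model, $L_\xi\circ\Phi = L_\xi(W)+\tfrac12\sum_i w_i|z_i|^2$ exactly. The $\ker L_\xi$-component equals $\Phi|_X$ plus a term quadratic in the fibre.

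\textbf{Step 3: compute the jump.} Reduce in stages. For $\beta$ near $\beta_0$, the reduced space $M_\beta$ fibres over the reduced space of $X$ at the projection of $\beta$ to $W$, which is a single point since $X$ is toric. Its fibre is the $S^1$-reduction of $\C^m$ (with weights $w_i$) at level $t=L_\xi(\beta)-L_\xi(W)$, up to a correction coming from the quadratic $\ker L_\xi$-terms.

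The Duistermaat--Heckman density of $\C^m$ with weights $w_i$ is a piecewise polynomial in $t$. Its jump at $t=0$ (the value for $t>0$ minus the value for $t<0$) is
\[
\frac{t^{m-1}}{(m-1)!\prod_i w_i},
\]
which one sees from the residue at $x=0$ of $e^{tx}/\prod_i(w_ix)$. Multiplying by the density $1/c_\alpha$ of $X$ gives the stated formula.

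\textbf{Main obstacle.} The hard part is controlling the twisting of $N$ and the quadratic shift of the $\ker L_\xi$-components. In general these would produce curvature corrections: lower-order terms in $(L_\xi-L_\xi(W))$ involving integrals of characteristic classes of $N$ over reduced spaces of $X$. I would handle this with the Jeffrey--Kirwan/residue form of localization. The jump equals $\operatorname{Res}_{x=0}\int_{X_\beta} e^{tx}/e_{S^1}(N)$, with $e_{S^1}(N)=\prod_i(w_ix+c_1(L_i))$ after splitting. Since here $X_\beta$ is a point, all Chern-class terms vanish in degree zero, and only the displayed term survives. This is the specialization of \cite[Theorem 3.2.10]{GLS} to the case where $X$ has dimension $2\dim W$.
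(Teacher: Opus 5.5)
Your proposal is correct and, at its decisive step, rests on the same thing the paper does: the paper gives no argument beyond citing \cite[Theorem 3.2.10, Equation (3.90)]{GLS}, and your residue/Jeffrey--Kirwan step is that general wall-crossing theorem specialized to $\dim X=2\dim W$, where the reduced spaces of $X$ are points, the Chern-class corrections drop out, and the density $1/c_\alpha$ of $X$ multiplies the $\C^m$ jump $t^{m-1}/\bigl((m-1)!\prod_i w_i\bigr)$. As an aside, your ``main obstacle'' does not actually arise here: the hypothesis $\dim\Phi^{-1}(W)=2\dim W$ forces all $w_i$ to have one sign (so $W\subset\partial\Phi(M)$, the only case the paper uses), and since $X=\Phi^{-1}(W)$ is toric, $\Phi^{-1}(\beta_0)$ is a single orbit whose stabilizer has identity component $S^1_\xi$, so by properness the local normal form (Theorem~\ref{thm:lnf}) at that one orbit already gives an exact model $T\times_H\fh^\circ\times\C^m$ for the measure near $\beta_0$, with no twisting of $N$ or quadratic shift to control.
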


\begin{remark}
    For any positive number $k \in \R$, replacing $\xi$ by $k\xi$ yields the same expression on the right hand side (notice that $\alpha$ needs to be replaced by $\frac{1}{k}\alpha$ correspondingly), so once we choose a normal vector that gives the correct orientation, the exact choice of the normal vector does not matter in the formula.
\end{remark}

\section{Convexity}\label{sec:convexity}
In this section, we prove the following convexity result for Hamiltonian $T$-spaces with proper moment map and discuss how this result applies in various settings.

\begin{proposition}\label{prop:convexity-abstract}
Let  $(M,\omega,\Phi)$ be a connected Hamiltonian $T$-space with  proper moment map.
Let $A$ be a closed subset of $M$ so that for all $p \in A$ there exist  neighborhoods $V_p$ of $p$
and  $U_p$ of $\Phi(p)$ such that $$\Phi(A \cap V_p) \cap \Phi(M)^\circ \cap U_p= \Phi_p(S_p) \cap \Phi_p(Y_p)^\circ \cap U_p,$$ 
where $S_p \subseteq Y_p$ denotes the set of singular points  of the moment map $\Phi_p \colon Y_p \to \mathfrak t^*$
  for the local model $Y_p$ associated to $p$.  
  Let $\mathcal{R} := \Phi(M)^\circ \setminus {\Phi}(A)$.
  Then the following holds:

\begin{enumerate} 
 \item  Given $\alpha \in \Phi(M)$, there is a neighborhood $U_\alpha$ of $\alpha \in \ft^*$ such that
\[\mathcal R \cap U_\alpha = \underset{ p \in \Phi^{-1}(\alpha) \cap  A }{\medcap} \mathcal R_p \cap U_\alpha,\footnote{Here, we follow the convention that if $\Phi^{-1}(\alpha) \cap A = \emptyset$, then the right hand side is $\ft^* \cap U_\alpha = U_\alpha$.}\] where $\mathcal{R}_p = \Phi_p(Y_p)^\circ \setminus \Phi_p(S_p)$ is the set of regular values of $\Phi_p$ inside $\Phi_p(Y_p)$.
\item 
Each connected component  $\mathcal{C}$ of $\mathcal{R}$ is the interior of a convex rational locally polyhedral set $\overline{\mathcal C}$; moreover,
$\Phi(M)$ is the union of all such $\overline{\mathcal C}$.
\end{enumerate}
\end{proposition}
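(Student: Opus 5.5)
For part (1), I will fix $\alpha \in \Phi(M)$ and use properness of $\Phi$ to localize. The fiber $\Phi^{-1}(\alpha)\cap A$ is compact, so it is covered by finitely many of the neighborhoods $V_{p_1},\ldots,V_{p_r}$ from the hypothesis. Shrinking further, I may assume each $V_{p_i}$ lies inside a local-normal-form chart from Theorem~\ref{thm:lnf}. For a point $q \in \Phi^{-1}(\alpha)\cap A \cap V_{p_i}$, the local model $Y_q$ is a local model of $Y_{p_i}$ at the corresponding point. Near $\alpha$, the set $\mathcal R_q$ contains $\mathcal R_{p_i}$, because singular values near $\alpha$ coming from orbits near $q$ are among those coming from orbits near $p_i$. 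Since $\alpha\in\Phi_{p_i}$-image is the vertex of the cone, and $\Phi_{p_i}(S_{p_i})$ is a finite union of cones over the same apex, $\mathcal R_{p_i}$ is in fact contained in $\mathcal R_q$ near $\alpha$. Hence the intersection over all $p\in\Phi^{-1}(\alpha)\cap A$ agrees near $\alpha$ with the finite intersection over $p_1,\ldots,p_r$.

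Next, properness gives a neighborhood $U_\alpha$ with $\Phi^{-1}(U_\alpha)\cap A \subset \bigcup V_{p_i}$, since otherwise a sequence in $A$ outside $\bigcup V_{p_i}$ would accumulate, by properness and closedness of $A$, at a point of $\Phi^{-1}(\alpha)\cap A$. Then $\Phi(A)\cap\Phi(M)^\circ\cap U_\alpha=\bigcup_i \Phi_{p_i}(S_{p_i})\cap\Phi_{p_i}(Y_{p_i})^\circ\cap U_\alpha$ by the hypothesis. Lemma~\ref{lem:local-cone} gives $\Phi(M)\cap U=\Phi_{p}(Y_{p})\cap U$, so the interiors agree locally. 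Taking complements inside $\Phi(M)^\circ\cap U_\alpha$ yields the formula in (1). If the fiber misses $A$, properness gives $U_\alpha$ disjoint from $\Phi(A)$ and the formula reads $\mathcal R\cap U_\alpha=\Phi(M)^\circ\cap U_\alpha$. Here I use Lemma~\ref{lem:local-cone} with any $p$ in the fiber; note that this should be $\Phi(M)^\circ \cap U_\alpha$, and I will interpret the convention accordingly.

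For part (2), the key local fact is the following. Each $\mathcal R_p$ is the complement, in the rational polyhedral cone $\Phi_p(Y_p)^\circ$, of finitely many rational polyhedral cones $\Phi_p(S_p)$, which are images of coordinate subspaces. These cones have codimension at least one, so by general position they lie in finitely many rational hyperplanes through $\Phi(p)$. I will argue that each connected component of $\mathcal R_p$ is an open convex rational polyhedral cone, being a chamber cut out by those hyperplanes; the point to check is that the codimension-one pieces are full hyperplane pieces separating chambers. Then, by (1), near every $\alpha\in\Phi(M)$ each component of $\mathcal R\cap U_\alpha$ is, on a small convex $U_\alpha$, an intersection of finitely many open rational half-spaces with $U_\alpha$, and is therefore locally convex polyhedral.

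To globalize, I will take the closure $\overline{\mathcal C}$ of a component $\mathcal C$. By the above, at every point of $\overline{\mathcal C}$ it is locally a rational polyhedral cone, since finitely many local chambers meet there. Moreover $\overline{\mathcal C}$ is connected and closed, because $\Phi(M)$ is closed by properness. A Tietze--Nakajima type theorem (a closed connected locally convex set in $\R^n$ is convex) then gives convexity; I expect to prove this directly by the standard segment-straightening argument if no citation is allowed. Local polyhedrality with finitely many faces near each point gives ``locally polyhedral.'' Finally, $\Phi(M)=\bigcup\overline{\mathcal C}$ because $\mathcal R$ is dense in $\Phi(M)$: locally, $\Phi_p(S_p)$ has empty interior in the cone. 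The main obstacle I anticipate is the local claim that components of $\mathcal R_p$ are convex, that is, that singular-value cones of codimension one extend to full walls through the whole local cone. I would first try to verify this using the explicit description of $S_p$ as the union over subtori of the images of coordinate subspaces with nontrivial stabilizer.
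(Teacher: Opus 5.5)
Your part (1) is correct, but the route differs from the paper's. The paper uses that only finitely many slice representations occur in $\Phi^{-1}(\alpha)$. You instead show $\mathcal R_{p_i}\subseteq\mathcal R_q$ near $\alpha$, via the normal form inside $Y_{p_i}$ plus what is essentially Lemma~\ref{lem:open-at-0}, and then use that both sets are cones with apex $\alpha$ to get a uniform neighborhood. Your correction of the footnote is also right: for $A=\emptyset$ and $\alpha\in\partial\Phi(M)$ the left side is $\Phi(M)^\circ\cap U_\alpha\neq U_\alpha$. This case never arises in the application $A=S$.

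Part (2) has two genuine gaps. First, local convexity of the components of $\mathcal R_p$ is the heart of the proposition, and the route you propose for it fails. Walls of singular values need not extend to full hyperplane pieces, so a component is in general a union of several chambers of the hyperplane arrangement. For example, let $T=H=T^3$ act on $\C^5$ with weights $e_1,e_2,e_3,e_3-e_1,-e_3-e_1$. The moment image is $\{x_2\ge 0\}$, and in the slice $\{x_2=1\}$ the singular values are the four rays $e_2+\R_{\ge0}\eta$ for $\eta\in\{e_1,e_3,e_3-e_1,-e_3-e_1\}$. The plane $\mathrm{span}(e_1,e_2)$ is spanned by weights, but only its part with $x_1\ge 0$ is singular. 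The points with $x_1<0$, $x_3=0$ are regular and lie inside the chamber bounded by the rays in directions $e_3-e_1$ and $-e_3-e_1$. That chamber is convex, but not for the reason you give. What you need is Lemma~\ref{lem:rvlnf}: the component containing a regular value $\beta$ is $\fh^\circ+\bigcap\{\sum_{i\in I}\R_{>0}\eta_i : \beta\in \fh^\circ+\sum_{i\in I}\R_{>0}\eta_i\}$. One proves this by showing that this convex set is a union of components of the regular set (the open simplicial cones are relatively clopen there). Then Lemma~\ref{lem:span-one-less} and a segment argument show it contains no singular value.

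Second, the globalization step assumes what it needs to prove. Tietze--Nakajima applied to $\overline{\mathcal C}$ needs $\overline{\mathcal C}\cap U$ to be convex near every point. Part (1) only shows that $\mathcal C\cap U_\alpha$ is a union of \emph{some} of the finitely many local chambers at $\alpha$. A priori the connected open set $\mathcal C$ could reach $\alpha$ through two different chambers, and then $\overline{\mathcal C}\cap U_\alpha$ is a non-convex union of cones. So your assertion that $\overline{\mathcal C}$ is ``locally a rational polyhedral cone'' presupposes the conclusion, and the segment-straightening proof does not go through.

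The paper handles this with Cel's theorem (Theorem~\ref{thm:cel-convexity}) applied to the open set $\mathcal C$. Its hypothesis is only that each \emph{component} of $\mathcal C\cap U_\alpha$ is convex, which is exactly what the local analysis delivers. Once $\mathcal C$ is known to be convex, it meets each convex $U_\alpha$ in a single chamber. Local polyhedrality of $\overline{\mathcal C}$ and $\Phi(M)=\bigcup\overline{\mathcal C}$ (using local finiteness of the chambers) then follow as you indicate.
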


To prove Proposition~\ref{prop:convexity-abstract}, we will prove a local convexity result, and then use
a result of Cel~\cite{Cel} to show that this implies
global convexity.

Let $B$ be a nonempty subset of $\R^n$. 
Following Cel~\cite{Cel}, we say that a point $\alpha \in \overline{B}$ is a \textbf{point of strong local C-convexity of $B$} if and only if there exists a neighborhood $U$ of $\alpha$ in $\R^n$ such that each component of $B \cap U$ is convex.
The following is a special case of the main 
result of~\cite{Cel}.

\begin{theorem}[Cel \cite{Cel}] \label{thm:cel-convexity}
Let $B$ be an open connected subset of $\R^n$.
If all points of the boundary
of $B$ are points of strong 
local C-convexity, then $B$ is convex.
\end{theorem}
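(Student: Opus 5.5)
The strategy is the classical Tietze--Nakajima argument. Let $G \subset B \times B$ be the set of pairs $(x,y)$ such that the closed segment $[x,y]$ lies in $B$. Then $G$ is nonempty, since it contains the diagonal. Because $B$ is connected, $B \times B$ is connected, so it suffices to show that $G$ is both open and closed in $B\times B$. Then $G = B\times B$, which says exactly that $B$ is convex.

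First a reduction. If $U$ is a neighborhood of $\alpha$ such that each component of $B\cap U$ is convex, and $U'\subset U$ is an open ball around $\alpha$, then each component of $B\cap U'$ is a component of $C\cap U'$ for some component $C$ of $B\cap U$. Since $C\cap U'$ is convex, hence connected, that component is $C\cap U'$ itself, which is convex. So in the definition of strong local C-convexity we may take $U$ to be an open ball. In particular every boundary point $z$ of $B$ has a ball neighborhood $U_z$ such that the components of $B\cap U_z$ are open convex sets.

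Openness of $G$ is routine. If $[x,y]\subset B$, then by compactness of $[x,y]$ and openness of $B$ there is $\varepsilon>0$ with the $\varepsilon$-neighborhood of $[x,y]$ contained in $B$. That neighborhood contains $[x',y']$ whenever $|x-x'|,|y-y'|<\varepsilon$.

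For closedness, suppose $(x_k,y_k)\in G$ converges to $(x,y)\in B\times B$. Then $[x,y]\subset \overline B$. Suppose some point of $[x,y]$ lies outside $B$, and let $z$ be the first such point traveling from $x$ to $y$. Then $z\in\partial B$, $[x,z)\subset B$, and $z\neq x,y$ since $B$ is open. Take the ball $U=U_z$ and choose points $a\in[x,z)$ and $b\in(z,y]$ inside $U$, with $a$ close enough to $z$ that a small ball around $a$ lies in $B\cap U$. Let $C$ be the component of $B\cap U$ containing $a$; it is open and convex. Pick $a_k,b_k\in[x_k,y_k]$ with $a_k\to a$ and $b_k\to b$. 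For large $k$ the segment $[a_k,b_k]$ lies in $U$ (as $U$ is a convex ball) and in $B$. It is therefore a connected subset of $B\cap U$ containing $a_k\in C$, so $b_k\in C$ and hence $b\in\overline C$. The open segment joining an interior point $a$ of the convex set $\overline C$ to a point $b\in \overline C$ lies in the interior of $\overline C$, which is $C$ because $C$ is open and convex. Since $z\in(a,b)$, this gives $z\in C\subset B$, a contradiction. So $[x,y]\subset B$ and $G$ is closed.

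The main obstacle is this closedness step. The delicate points are choosing $z$ as the \emph{first} bad point, so that $a$ can be taken inside $B$, and using connectedness of the approximating segments $[a_k,b_k]$ to place them all in the single convex component $C$. Once that is arranged, convexity of $C$ forces the limit point $z$ into $B$.
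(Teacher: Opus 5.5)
Your proof is correct. It differs from the paper, which gives no proof of this statement: it invokes Cel's theorem as a black box, noting only that the statement is a special case of Cel's main result. You instead give a self-contained Tietze--Nakajima argument. The set $G$ of pairs joined by a segment inside $B$ is clopen in the connected space $B\times B$, hence is all of $B\times B$.

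The weaker hypothesis (convexity of the \emph{components} of $B\cap U$, not of $B\cap U$ itself) enters only in your closedness step, and you handle it correctly. The connected segments $[a_k,b_k]\subset B\cap U$ trap $b$ in $\overline C$ for the single component $C$ containing $a$. Then $[a,b)\subset \mathrm{int}\,\overline C = C$ forces the first bad point $z$ into $B$. Your preliminary reduction to ball neighborhoods is also necessary and correctly done. It makes $B\cap U$ open with open convex components, which you need both for $a_k\in C$ and for $\mathrm{int}\,\overline C=C$. It also guarantees $[a_k,b_k]\subset U$.

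The citation buys brevity and access to Cel's more general result, most of which the paper never uses. Your route buys a fully elementary proof. That fits the paper's stated aim of an elementary proof of Theorem~\ref{thm:convexity-of-chambers}. It is also tailored to the situation in Proposition~\ref{prop:convexity-abstract}, where near a wall point the set $\mathcal R\cap U_\alpha$ has several convex components rather than being convex itself.
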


To prove local convexity we will need the following variant of Caratheodory's theorem for cones.

\begin{lemma}\label{lem:span-one-less}
Given  $\eta_1,\ldots, \eta_{\ell}\in\R^k$  and  $\beta \in \sum_{i=1}^\ell \R_{\geq 0} \eta_i$, there exists
 $I \subseteq \{1,\dots,\ell\}$ such that $\{\eta_i\}_{i \in I}$ is linearly independent and $\beta \in \sum_{i \in I} \R_{\geq 0} \eta_i$.
\end{lemma}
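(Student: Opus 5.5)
I will argue by the standard minimal-support (Carathéodory) reduction. Since $\beta$ lies in the cone generated by $\eta_1,\ldots,\eta_\ell$, there exists at least one index set $J \subseteq \{1,\dots,\ell\}$ with $\beta \in \sum_{i\in J}\R_{\geq 0}\eta_i$; for instance $J=\{1,\dots,\ell\}$ works. Among all such $J$, choose one of minimal cardinality, and write $\beta = \sum_{i\in J} c_i \eta_i$ with all $c_i \geq 0$. By minimality every $c_i$ is in fact strictly positive: if some $c_i = 0$, deleting $i$ from $J$ would give a smaller admissible set. If $J=\emptyset$, then $\beta = 0$ and the empty family is trivially linearly independent, so there is nothing to prove.

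It remains to show that $\{\eta_i\}_{i\in J}$ is linearly independent. Suppose not. Then there are real numbers $a_i$ for $i\in J$, not all zero, with $\sum_{i\in J} a_i\eta_i = 0$. Replacing $a$ by $-a$ if necessary, we may assume some $a_i > 0$. For $t \geq 0$ consider
\[\beta = \sum_{i\in J} (c_i - t a_i)\eta_i.\]
Set $t_0 = \min\{ c_i/a_i : a_i>0\}$, which is strictly positive. For every $i$, the coefficient $c_i - t_0 a_i$ is nonnegative: when $a_i\le 0$ this holds because $c_i>0$ and $t_0>0$, and when $a_i>0$ it holds by the choice of $t_0$. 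Moreover, the coefficient vanishes for at least one index, namely one attaining the minimum. This writes $\beta$ as a nonnegative combination of strictly fewer of the $\eta_i$, contradicting the minimality of $|J|$.

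Hence $I := J$ has the required properties. The proof is elementary; the only point that needs care is the sign normalization, which guarantees that some $a_i$ is positive so that $t_0$ is well defined and positive.
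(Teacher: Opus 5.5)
Your proof is correct and follows the paper's argument: both move along a linear dependency among the generators until one coefficient hits zero, which places $\beta$ in the nonnegative span of a proper subset. The only difference is packaging: you take a minimal-cardinality support and derive a contradiction, whereas the paper repeats the reduction step until the remaining vectors are linearly independent.
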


\begin{proof}

Assume that
$\{\eta_1, \ldots, \eta_\ell\}$ are  linearly dependent; we will show that  $\beta$ is in the non-negative span of a proper subset of $\{\eta_1, \ldots, \eta_\ell\}$. Repeating this argument, if necessary, $\beta$ lies in
the non-negative span of a linearly independent subset.

By assumption, there exists $x_1,\ldots,x_{\ell}\geq 0$ such that  $\beta = \sum_{i=1}^{\ell} x_i \eta_i.$
 If $x_i = 0$ for some $i$, then  $\beta$ is in the non-negative span of the remaining vectors, so we may  now assume that $x_1,\ldots, x_{\ell}>0$.
 By assumption,  there exist $y_1,\ldots,y_{\ell}\in\R$, not all zero,  such that $\sum_{i=1}^{\ell} y_i \eta_i = 0$.
 We may assume that at least one of the $y_i$ satisfies $y_i<0$, otherwise replace each $y_i$ with $-y_i$. Thus, for any $a \in \R$ we have that
 \[
  \beta = \sum_{i=1}^{\ell}x_i\eta_i + a \left( \sum_{i=1}^{\ell} y_i \eta_i \right) = \sum_{i=1}^{\ell}(x_i+a  y_i)\eta_i.
 \]
 Let $a_0>0$ be the smallest positive value such that $x_{i_0}+a_0 y_{i_0}=0$ for some $i_0$, such an $a_0$ must exist since all $x_i$ are positive and at least one of the $y_i$ is negative.
 Then $\beta$ is in the non-negative span of $\{\eta_1,\ldots,\eta_{\ell}\}\setminus\{\eta_{i_0}\}$. 
\end{proof}

Next, we describe the connected components of the regular values of the moment map for a local model.

\begin{lemma}\label{lem:rvlnf}
Let $H \subseteq T$ act on $\C^{n}$ with weights $\eta_1,\dots,\eta_n \in \fh^*$.
Let $Y = T \times_H \fh^\circ \times \C^{n}$ be the associated local model and $\Phi_Y: Y \to \ft^*$ be the homogeneous moment map. 
If  $\mathcal{C}\subseteq \Phi_Y(Y)$ is a connected component of regular values of $\Phi_Y$, then for any $\beta \in \mathcal C$,
\begin{equation*}
\mathcal{C} = \fh^\circ + \medcap \big \{\sum_{i \in I} \R_{> 0} \eta_i \ \big| \  I \subseteq \{1,\dots,n\} \text{ and }   \beta \in \fh^\circ + \sum_{i \in I} \R_{> 0} \eta_i . \big \}.
\end{equation*}\end{lemma}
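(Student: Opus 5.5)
We first reduce to the cone $C=\sum_{i=1}^n \R_{\geq 0}\eta_i\subseteq\fh^*$. The moment map is $\Phi_Y([t,\nu,z])=\nu+\frac12\sum_i|z_i|^2\eta_i$, and $\ft^*=\fh^*\oplus\fh^\circ$ via the fixed inner product. Hence $\Phi_Y(Y)=\fh^\circ+C$, and regularity depends only on the $\fh^*$-component. At the point $[t,\nu,z]$ the stabilizer has Lie algebra $\{X\in\fh:\eta_i(X)=0 \text{ for all } i \text{ with } z_i\neq 0\}$. So the point is regular exactly when $\{\eta_i: z_i\neq0\}$ spans $\fh^*$. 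A value $\beta=\nu+\gamma$ with $\gamma\in\fh^*$ is therefore singular if and only if $\gamma$ lies in the nonnegative span of a family $\{\eta_i\}_{i\in J}$ that does not span $\fh^*$. Lemma~\ref{lem:span-one-less} lets us pass from arbitrary nonnegative combinations to linearly independent subsets. The singular set in $\fh^*$ is then the finite union $S$ of the closed cones $\sum_{i\in J}\R_{\geq0}\eta_i$ over linearly independent $J$ with $|J|<\dim\fh^*$. Each of these cones has dimension below $\dim\fh^*$. Consequently the regular values are $\fh^\circ+(C^\circ\setminus S)$ (interior taken in $\fh^*$), and $\mathcal C=\fh^\circ+\mathcal C'$ for a component $\mathcal C'$ of $C^\circ\setminus S$. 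It therefore suffices to prove the formula inside $\fh^*$ with $\fh^\circ$ dropped.

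Fix $\beta\in\mathcal C'$ and let $D=\bigcap\{\sum_{i\in I}\R_{>0}\eta_i : \beta\in\sum_{i\in I}\R_{>0}\eta_i\}$. We make two observations about this set.
\begin{itemize}
\item Since $\beta$ is regular, any $I$ in the intersection with $\{\eta_i\}_{i\in I}$ non-spanning would place $\beta$ in $S$. So every open cone $\sum_{i\in I}\R_{>0}\eta_i$ in the intersection is full-dimensional and hence open.
\item Such cones are convex and contain $\beta$, and there are finitely many of them. Therefore $D$ is a convex open set containing $\beta$, and in particular connected.
\end{itemize}

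For $D\subseteq\mathcal C'$, it suffices to show $D\cap S=\emptyset$. Then $D$ is a connected subset of the regular values containing $\beta$, so it lies in the component $\mathcal C'$. Suppose instead $\gamma\in D\cap\sum_{i\in J}\R_{\ge0}\eta_i$ with $J$ independent and $|J|<\dim\fh^*$; we may shrink $J$ so that $\gamma\in\sum_{i\in J}\R_{>0}\eta_i$. Pick a relative-interior-style lift: by Lemma~\ref{lem:span-one-less} applied to $\beta$, write $\beta\in\sum_{i\in K}\R_{>0}\eta_i$ with $K$ a basis (so $|K|=\dim\fh^*$). Then $\sum_{K}\R_{>0}\eta_i$ is one of the cones in $D$, so $\gamma$ lies in this open simplicial cone. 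We then want a cone $I\ni$ in the intersection whose closure avoids $\gamma$. This is the main obstacle: we must produce, from the wall $\sum_J\R_{\geq0}\eta_i$, an open cone containing $\beta$ but not $\gamma$.

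To get it, use the segment from $\beta$ to $\gamma$. It meets $S$, so there is a first hitting point on some codimension-one cone $\sigma=\sum_{J'}\R_{\ge0}\eta_i$. Take the basis $K$ and replace it by $J'\cup\{k\}$, choosing $k\in K$ so that $\beta$ still lies in the open cone $\sum_{J'\cup\{k\}}\R_{>0}\eta_i$. This is an exchange argument in the style of the proof of Lemma~\ref{lem:span-one-less}, moving $\beta$ along a linear dependency. The resulting cone is open and contains $\beta$, and its boundary contains $\sigma$. Hence it does not contain points just beyond the hitting point on the $\gamma$ side, contradicting convexity of $D$ together with $\gamma\in D$. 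A cleaner alternative is to pick a linear functional vanishing on $\mathrm{span}\,\sigma$ and positive at $\beta$, and show that some simplicial cone through $\beta$ has $\sigma$ as a facet. This uses that $\beta$ and the hitting point are separated by no other wall.

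For the reverse inclusion $\mathcal C'\subseteq D$, take any cone $\sum_I\R_{>0}\eta_i$ in the intersection. Its boundary is contained in a union of cones $\sum_{J}\R_{\ge0}\eta_i$ with $J\subsetneq I$ non-spanning, hence in $S$. So $\mathcal C'$, being connected, disjoint from $S$, and meeting the cone at $\beta$, lies entirely inside that cone. Intersecting over all such $I$ gives $\mathcal C'\subseteq D$. Combining both inclusions and adding $\fh^\circ$ yields the lemma.
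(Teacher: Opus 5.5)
Your outline is the paper's. You reduce to $\fh^*$. You observe that every cone $\sum_{i\in I}\R_{>0}\eta_i$ containing the regular value $\beta$ is full-dimensional with boundary in $S$; this gives $\mathcal C'\subseteq D$, exactly as the paper's clopen argument does. You then prove $D\cap S=\emptyset$ using the first singular point $\gamma'$ on the segment from $\beta$ to a putative $\gamma\in D\cap S$.

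The gap is the step you yourself call the main obstacle: it is asserted, never proved. You claim one can pick $k\in K$ with $\beta\in\sum_{i\in J'\cup\{k\}}\R_{>0}\eta_i$, citing an exchange argument ``in the style of'' Lemma~\ref{lem:span-one-less}. That procedure only removes generators from a nonnegative representation. Nothing in it ensures that all of $J'$ survives alongside a single element of $K$, and it never uses regularity of the segment, which is what actually forces $\beta$ into the right cone.

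Worse, the claim fails for an arbitrary codimension-one cone $\sigma$ through $\gamma'$ when $\gamma'$ lies in a lower-dimensional stratum. Take weights $\eta_1=(0,0,1)$, $\eta_2=(1,0,0)$, $\eta_3=(0,1,0)$, $\eta_4=(-1,0,0)$ and $\beta=(-\epsilon,\epsilon,1)$. The segment from $\beta$ to $\eta_1$ consists of regular values until it reaches $\eta_1\in\sigma=\R_{\geq0}\eta_1+\R_{\geq0}\eta_2$. Yet no $k$ gives $\beta\in\sum_{i\in\{1,2,k\}}\R_{>0}\eta_i$. So the choice of $\sigma$ matters and must be specified. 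Your ``cleaner alternative'' has the same problem: it merely restates what has to be shown.

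The missing idea is which cone to use and why $\beta$ lies in it; this is the heart of the paper's proof.
\begin{enumerate}
\item Take $J$ with $\{\eta_i\}_{i\in J}$ independent, $|J|<\dim\fh^*$, and $\gamma'\in\sum_{i\in J}\R_{>0}\eta_i$. This is the face of $\gamma'$ itself, with strictly positive coefficients, not a codimension-one cone through it.
\item Let $\pi$ be a projection with kernel $\sum_{i\in J}\R\eta_i$. Apply Lemma~\ref{lem:span-one-less} to $\pi(\beta)=\sum_{k\in K}x_k\pi(\eta_k)$. This gives $J'\subseteq K$ with $\{\pi(\eta_i)\}_{i\in J'}$ independent and $\pi(\beta)\in\sum_{i\in J'}\R_{>0}\pi(\eta_i)$.
\item Then $\beta_t=(1-t)\beta+t\gamma'$ lies in $\sum_{i\in J\cup J'}\R_{>0}\eta_i$ for $t<1$ near $1$. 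Regularity of $\beta_t$ forces $J\cup J'$ to be a basis; in particular $J'\neq\emptyset$.
\item The coordinates of $\beta_t$ in this basis are affine in $t$, and the $J'$-coordinates are positive for $t<1$. Suppose some $J$-coordinate were $\le 0$ at $t=0$. At the last $t_0\in[0,1)$ where a coordinate vanishes, $\beta_{t_0}$ would be a nonnegative combination of fewer than $\dim\fh^*$ independent weights, hence singular. That contradicts regularity.
\item So $\beta\in\sum_{i\in J\cup J'}\R_{>0}\eta_i$, while $\gamma'$ is not in this cone (its $J'$-coordinates vanish). This contradicts $\gamma'\in[\beta,\gamma]\subseteq D$.
\end{enumerate}
Your instinct that the new generators can be drawn from $K$ is therefore correct. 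But the retained generators must be those of the face of $\gamma'$, and positivity of $\beta$'s coordinates comes from regularity along the segment, not from an exchange argument.
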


\begin{proof}
The homogeneous moment map is given by $\Phi_Y([t,\nu,z]) = \nu + \Phi_H(z)$, where $\Phi_H \colon \C^n \to \fh^*$
is the homogeneous moment map for the $H$ action on $\C^n$.
Hence, it is enough to prove that if $\mathcal{C}_H \subseteq \Phi_H(\C^n)$ is a connected component of regular values of $\Phi_H$, then for any $\beta \in \mathcal C_H$
\begin{equation}\label{eqn:chambers-intersection}
\mathcal{C}_H = \medcap \big\{\sum_{i \in I} \R_{> 0} \eta_i  \ \big| \  I \subseteq \{1,\dots,n\} \text{ and }   \beta \in \sum_{i \in I} \R_{> 0} \eta_i  \big \}.
\end{equation}

The homogeneous moment map is  $\Phi_H(z_1,...,z_n) = \frac{1}{2}\sum_{i=1}^n |z_i|^2 \, \eta_i$.  Hence, by Lemma~\ref{lem:span-one-less}, every point $\alpha$ in the
moment image $\Phi_H(\C^n)$ can be written as a (strictly) positive linear combination 
of linearly independent  weights; moreover, $\alpha$  is a singular value of $\Phi_H$
 exactly if it can be written as a positive linear combination of fewer than $h := \dim H$
 linearly independent weights.    
 In particular, since $\beta$ is regular value, any linearly independent set of weights which includes $\beta$ in its positive span is a basis. 

      Let  $\mathcal{R}$ be the set of points in $\Phi_H(\C^n)$ that are regular values of $\Phi_H$, and let $\mathcal{P}$ denote the set in the right hand side of Equation \eqref{eqn:chambers-intersection}.
    
    Whenever $\{ \eta_i \colon i \in I\}$ is a basis, the above argument implies that $(\sum_{i \in I} \R_{> 0} \eta_i) \cap \mathcal{R} = (\sum_{i \in I} \R_{\geq 0} \eta_i ) \cap \mathcal{R}$, so $(\sum_{i \in I} \R_{> 0} \eta_i) \cap \mathcal{R}$ is both open and closed in $\mathcal{R}$. Hence, $\mathcal P \cap \mathcal{R}$ is a  union of connected components of regular values. 
    Since $\mathcal{P}$ is convex and thus connected, to complete the proof it is sufficient to show that $\mathcal{P}\subseteq\mathcal{R}$.

 So assume $\mathcal{P} \not\subseteq \mathcal{R}$.  Since $\mathcal{R}$ is open and $\mathcal{P}$ is convex, there exists a singular value  $\gamma\in \mathcal{P}$  such that $\beta_t = (1-t)\beta+t\gamma$ is regular for $t\in[0,1)$.
    Since $\gamma$ is singular, $\gamma\in \sum_{i \in J} \R_{> 0} \eta_i$ for some index set $J \subset \{1,\ldots,n\}$ with $j := |J|<h$ such that $\{\eta_i:i \in J\}$ is a set of linearly independent weights. 
    
    Let $\pi \colon \fh^* \to\R^{h-j}$ be a  projection with $\mathrm{ker}(\pi) = \sum_{i \in J} \R \eta_i$. 
    Since $\beta \in \Phi_H(\C^n) = \sum_{i=1}^n \R_{\geq 0} \eta_i$,  $\pi(\beta) \in \sum_{i \notin J} \R_{\geq 0} \pi(\eta_i)$. By Lemma~\ref{lem:span-one-less}, there is a set of linearly independent weights $\{\pi(\eta_i): i \in J'\}$ in $\R^{h-j}$ such that $\pi(\beta)\in \sum_{i \in J'} \R_{> 0} \pi(\eta_i)$, so $\beta \in \sum_{i \in J'}\R_{> 0} \eta_i + \sum_{i \in J} \R \eta_i$. 
    Since $\gamma\in \sum_{i\in J}\R_{>0}\eta_i$, for $t<1$ sufficiently close to 1, $\beta_t \in \sum_{i\in J\cup J'} \R_{>0}\eta_i$. Since $\beta_t$ is regular for all $t\in [0,1)$, we conclude that $\{\eta_i: i \in J \cup J'\}$ is a basis. Moreover, $\beta \in \sum_{i\in J\cup J'}\R_{>0}\eta_i$, since otherwise there would be some $\beta_{t_0}$ for which the coefficient of one of the $\{\eta_i\mid i\in J\}$ is zero, and therefore would be singular.
    On the other hand, since $\gamma\in \sum_{i \in J} \R_{> 0} \eta_i$, then $\gamma\notin \sum_{i \in J\cup J'} \R_{> 0} \eta_i$, so $\gamma \not \in \mathcal P$,  which is a contradiction.
\end{proof}

Now we are ready to prove Proposition~\ref{prop:convexity-abstract}.

\begin{proof}[Proof of Proposition~\ref{prop:convexity-abstract}]
The  moment map $\Phi$ is closed because it's proper and because $\ft^*$ locally compact and Hausdorff.  
Hence, $\De :=\Phi(M)$ and $\Phi(A)$  are closed.
Thus, the set $\mathcal{R}$ is open in $\ft^*$.

(1) Fix  $\alpha\in \Phi(M)$ and $p\in\Phi^{-1}(\alpha)$.
If $p\notin A$, then since $A$ is closed, there exists a neighborhood $V_p$ such that $V_p\cap A = \emptyset$. Combining this with the assumption on $A$, for all $p$, there
exist neighborhoods $V_p$ of $p$
and $U_p$ of $\Phi(p)$ such that 
$$\Phi(A \cap V_p) \cap \De^\circ \cap U_p = \begin{cases} \emptyset & \text{if } p \not\in A, \\
\Phi_p( S_p) \cap \Phi_p(Y_p)^\circ \cap U_p & \text{if } p \in A. \end{cases}$$
Since $\Phi^{-1}(\alpha)$ is compact, there exist $p_1,...,p_\ell \in \Phi^{-1}(\alpha)$ 
such that $\Phi^{-1}(\alpha) \subset \medcup_{i=1}^\ell  V_{p_i}$. Since there are only a finite number of different slice representations for points in $\Phi^{-1}(\alpha)$,
we may further assume that  $p_i$ is in $A$ exactly if $i \leq k$, and 
that every slice representation of a point $p \in \Phi^{-1}(\alpha) \cap A$ 
is represented by $p_i$ for some $1 \leq i \leq k$. 
Hence,
$$\mathcal{R}_\alpha:=\underset{ p \in \Phi^{-1}(\alpha) \cap  A }{\medcap} \mathcal R_p  = \medcap_{i=1}^k \mathcal R_{p_i}. $$
By Lemma~\ref{lem:rvlnf}, there are a finite number of connected
components of $\mathcal R_p$, each of which is the interior of
a convex rational polyhdeal cone with vertex $\alpha$.  
Therefore, there are a finite number of connected components of   $\mathcal R_\alpha$, each of
which is the interior of a convex rational polyhedral cone with vertex $\alpha$.

Since the  moment map $\Phi$ is closed and $\Phi^{-1}(\alpha) \subset \medcup_{i=1}^\ell  V_{p_i}$, there exists an open neighborhood $U_0$ of $\alpha$ such that $\Phi^{-1}(U_0) \subseteq \medcup_{i=1}^\ell V_{p_i}$. 
Let $U_\alpha \subseteq \bigcap_{i=0}^\ell U_{p_i}$ be a convex neighborhood of $\alpha$. By Lemma~\ref{lem:local-cone}, we may assume that $\De^\circ \cap U_\alpha = \Phi_{p_i}(Y_{p_i})^\circ \cap U_\alpha$ for all $i =1,\ldots,\ell$.
Then
$$ \big(\De^\circ \setminus \Phi(A \cap V_{p_i}) \big) \cap U_\alpha = \begin{cases}   \big( \Phi_{p_i}(Y_{p_i})^\circ \setminus \Phi_{p_i}(S_{p_
i}) \big) \cap U_\alpha & \text{if } 1 \leq i \leq k ,\\
\De^\circ \cap U_\alpha & \text{if } k < i \leq \ell. \end{cases}$$
Hence,
\begin{multline*}
\mathcal R \cap U_\alpha = 
(\De^\circ \setminus {\Phi}(A)) \cap U_\alpha  = \big(\De^\circ \setminus \Phi\big( A \cap \big(\medcup_{i=1}^\ell V_{p_i}\big)\big) \big)\cap U_\alpha 
=\medcap_{i=1}^\ell \big(\De^\circ  \setminus  {\Phi}(A  \cap V_{p_i}) \big)\cap U_\alpha \\ 
= \medcap_{i=1}^k  \big(\Phi_{p_i}(Y_{p_i})^\circ \setminus \big( \Phi_{p_i} (S_{p_i}) \big) \cap U_\alpha 
= \medcap_{i=1}^k \mathcal{R}_{p_i} \cap U_\alpha = \underset{ p \in \Phi^{-1}(\alpha) \cap  A }{\medcap} \mathcal R_p \cap U_\alpha.
\end{multline*}

(2) Now let $\mathcal C$ be a component of $\mathcal R$
such that $\alpha \in \overline{\mathcal C} \subseteq \Phi(M).$ Since $U_\alpha$ is a convex neighborhood of $\alpha$, $\mathcal R \cap U_\alpha = \mathcal R_\alpha \cap U_\alpha$, and every component of $\mathcal R_\alpha$ is convex,   each component of $\mathcal C \cap U_\alpha$ is convex.  
 Since $\mathcal{C}$ is open, by Theorem~\ref{thm:cel-convexity} this implies that $\mathcal{C}$, and hence $\overline{\mathcal{C}}$, is convex.
Since  each component of $\mathcal R_\alpha$ is
the interior of a convex rational polyhedral 
cone with vertex $\alpha$,  
$\overline{\mathcal C}$ is a rational locally polyhedral set. 
Similarly, since $\mathcal R_\alpha$ has a finite number of components, $\Phi(M)$ is the union of all such $\overline{\mathcal C}$.
(2) follows immediately.
\end{proof}

Before applying Proposition~\ref{prop:convexity-abstract} to different settings, we first prove that the restriction of the homogeneous moment map of a local model to the singular values is open at $0$.

\begin{lemma}\label{lem:open-at-0}
Let $Y = T \times_H \fh^\circ \times \C^{n}$ and let $S_Y \subset Y$  be the set of singular points of the homogeneous  moment map $\Phi_Y \colon Y \to \ft^*$.
Given an open neighborhood $V$ of $[1,0,0]$ in $Y$, there exists an open neighborhood $U$ of $0$ in  $\mathfrak{t}^*$  such that $\Phi_Y(S_Y) \cap U \subseteq \Phi_Y(S_Y \cap V).$
\end{lemma}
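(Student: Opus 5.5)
The key tool is the homogeneity of the local model: scaling commutes with the moment map and preserves singular points. Let $\R_{>0}$ act on $Y$ by $s\cdot[t,\nu,z] = [t, s^2\nu, s z]$. Then
\[\Phi_Y(s\cdot y) = s^2\Phi_Y(y).\]
The stabilizer of $s\cdot y$ equals that of $y$ for $s>0$, because the stabilizer of $[t,\nu,z]$ depends only on which coordinates $z_i$ vanish. A point is singular for $\Phi_Y$ exactly when its stabilizer has positive dimension, so $S_Y$ is invariant under this scaling.

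The plan is to push an arbitrary singular point into $V$ by scaling, and then use the $T$-action to move it next to $[1,0,0]$.

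\textbf{Step 1.} Fix a compact ``unit sphere'' of the $\fh^\circ\times\C^n$ factor,
\[K=\{[1,\nu,z] : |\nu|+|z|^2 = 1\}.\]
By $T$-invariance of $\Phi_Y$ and of $S_Y$, every point of $S_Y$ other than the orbit $T\cdot[1,0,0]$ is a $T$-translate of some $s\cdot k$ with $k\in K\cap S_Y$ and $s>0$. The orbit itself maps to $0$, so $0\in\Phi_Y(S_Y\cap V)$ provided $S_Y\neq\emptyset$. If $S_Y=\emptyset$ the statement is trivial.

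\textbf{Step 2.} Since $V$ is a neighborhood of $[1,0,0]$ and $K$ is compact, there is $\delta>0$ with $s\cdot k\in V$ for all $k\in K$ and all $0<s<\delta$. This holds because $(s,k)\mapsto [1, s^2\nu, sz]$ is continuous and sends $\{0\}\times K$ to $[1,0,0]$; a tube-lemma argument then gives $\delta$. Note that we only need points of the form $[1,\cdot,\cdot]$. Since $\Phi_Y$ is $T$-invariant, we may always replace a singular point $[t,\nu,z]$ by $[1,\nu,z]$, which is singular and has the same image.

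\textbf{Step 3.} The goal is to choose $U$ so that every $\beta\in\Phi_Y(S_Y)\cap U$ is attained as $\Phi_Y(s\cdot k)$ with $s<\delta$. The main obstacle is that $\Phi_Y$ need not be proper, so a small value $\beta$ might a priori only come from a large point $y$. To handle this, use the properness of the homogeneous moment map restricted to the $\C^n$ factor in the relevant directions.

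More precisely, write $\beta = \Phi_Y(s\cdot k) = s^2\Phi_Y(k)$. It suffices to know that
\[c:=\min\{|\Phi_Y(k)| : k\in K\cap S_Y\}\]
is strictly positive; then $|\beta|<c\,\delta^2$ forces $s<\delta$. This minimum fails to be positive only if some nonzero singular point maps to $0$. That happens exactly when there is a nonzero $z$ with $\sum |z_i|^2\eta_i=0$ and $\nu=0$ (here $\nu$ is forced to vanish since $\fh^\circ\cap\fh^*=0$).

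If such a point exists, modify the argument rather than rely on $c>0$. Given a singular $y=[1,\nu,z]$ with image $\beta$, first decrease the coordinates in the kernel directions. Concretely, use Lemma~\ref{lem:span-one-less}: $\Phi_H(z)\in\fh^*$ can be written as $\frac12\sum_{i\in I}|w_i|^2\eta_i$ with $\{\eta_i\}_{i\in I}$ linearly independent and $I\subseteq\{i : z_i\neq 0\}$. The point $[1,\nu,w]$, with $w_i=0$ for $i\notin I$, has stabilizer containing that of $y$, so it is still singular. For linearly independent weights, $|w|^2\le C|\Phi_H(w)|$ with a uniform constant $C$, taking the maximum over the finitely many independent subsets. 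Hence both $|\nu|$ and $|w|^2$ are bounded by a constant times $|\beta|$.

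Choosing $U$ to be a small ball then places $[1,\nu,w]$ in $V$. This replacement argument in fact handles both cases at once, so I would present it directly as the proof, with Steps 1 and 2 serving only as motivation.
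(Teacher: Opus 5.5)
Your final argument is correct and is essentially the paper's proof. You replace a singular point $[1,\nu,z]$ by one supported on a linearly independent subset of weights via Lemma~\ref{lem:span-one-less}, then bound $|\nu|+|w|^2\le C|\beta|$; the paper likewise writes $\Phi_Y(S_Y)$ as a union of images of the strata $T\times_H\fh^\circ\times\C^I$ with $\{\eta_i\}_{i\in I}$ linearly independent, and uses that $\Phi_Y$ is open onto its image on each such stratum. Your bound is just an explicit, quantitative form of that openness, and the scaling discussion in Steps 1--3 can be dropped.
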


\begin{proof}

Let $\eta_1,\ldots,\eta_n \in \fh^*$ denote the weights of the $H$-action on $\C^n$. Given $I \subseteq \{1,\dots,n\}$,
let $\C^I = \{ z \in \C^n \mid z_j = 0 \ \forall \  j \not \in I \}.$
Since  $\Phi_Y([t,\nu,z] ) = \frac{1}{2} \sum_i \eta_i |z_i|^2 + \nu,$
the moment map is singular at  $[t,\nu,z] \in Y$  exactly if $z \in \C^I$
  for some $I$ such that $\{\eta_i\}_{i \in I}$ does not span $\mathfrak{h}^*$.
Hence, by Lemma~\ref{lem:span-one-less},   $\Phi_Y(S_Y) = \cup_{I \in \mathcal I} \Phi_Y(T \times_H \fh^\circ \times \C^I)$, where $\mathcal I$ is
 the set of $I \subseteq \{1,\dots,n\}$ such that the weights $\{\eta_i\}_{i \in I}$
 are linearly independent but do not span $\fh^*$.
Moreover,   $\Phi_Y$ restricts to an open map from $T \times_H \fh^\circ \times \C^I$ to $\Phi_Y(T \times_H \fh^\circ \times \C^I)$ for all $I \in \mathcal I$;
since the weights $\{\eta_i\}_{i \in I}$ are linearly independent,
 the restriction can be written as the composition of two open maps: the quotient map from $T \times_H \fh^\circ \times \C^I$ to $\fh^\circ \times \R_{\geq 0}^I$,
 and an invertible linear transformation from $\fh^\circ \times \R^I$ to $\fh^\circ + \sum_{i \in I} \R \eta_i$.
In particular, there exists a neighborhood $U_I$ of $0$ in $\ft^*$ such that $\Phi_Y\left(T \times_H \fh^\circ \times \C^I\right) \cap U_I \subseteq \Phi_Y \left( \left(T \times_H \fh^\circ \times \C^I  \right) \cap V \right).$
Define $U := \cap_{I \in \mathcal I} U_I$. Then
 $\Phi_Y(S_Y) \cap U = \cup_{I \in \mathcal I} \Phi_Y\left(T \times_H \fh^\circ \times \C^I\right) \cap U  \subseteq \Phi_Y(S_Y \cap V).$
\end{proof}

We use Proposition~\ref{prop:convexity-abstract} to derive the next result, which describes the connected components of regular values of the moment map. Theorem~\ref{thm:convexity-of-chambers} follows immediately from the following result. Item (2) in the compact case is well-known (cf.~\cite[Example 3.7]{GLS}). 

\begin{theorem}\label{thm:convexity-regular-values}
Let  $(M,\omega,\Phi)$ be a connected Hamiltonian $T$-space with proper moment map $\Phi: M \to \mathfrak{t}^*$. Let $\mathcal{R}$ denote the set of points in $\Phi(M)$ that  are regular values of $\Phi$. Then the following holds:
\begin{enumerate} 
  \item Given $\alpha \in \Phi(M)$, there is a neighborhood $U_\alpha$ of $\alpha \in \ft^*$ such that
\[\mathcal R \cap U_\alpha = \underset{ p \in \Phi^{-1}(\alpha)}{\medcap} \mathcal R_p \cap U_\alpha,\] where $\mathcal{R}_p$ denotes the set of points in $\Phi_p(Y_p)$ that are regular values of the moment map $\Phi_p \colon Y_p \to \mathfrak t^*$
  for the local model $Y_p$ associated to $p$. 
  \item Each connected component $\mathcal{C}$ of $\mathcal{R}$  is the interior of a convex rational locally polyhedral set $\overline{\mathcal{C}}$; moreover, $\Phi(M)$ is the union of all such $\overline{\mathcal C}$.
\end{enumerate}
\end{theorem}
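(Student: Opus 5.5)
The plan is to apply Proposition~\ref{prop:convexity-abstract} with $A := S$, the set of singular points of $\Phi$ in $M$. First, $S$ is closed: a point is regular exactly when its stabilizer is finite, and by the slice theorem nearby points have stabilizers conjugate to subgroups of the stabilizer there. So the set of regular points is open. Next, the set of regular values in $\Phi(M)$ must equal $\Phi(M)^\circ\setminus\Phi(S)$. A regular value has all preimage points regular, so $\Phi$ is a submersion there and the value lies in the interior of $\Phi(M)$. Conversely, an interior point not in $\Phi(S)$ is regular by definition. Hence the $\mathcal R$ of the theorem coincides with the $\mathcal R$ of the proposition. Likewise, $\mathcal R_p$ as defined in the theorem equals $\Phi_p(Y_p)^\circ\setminus\Phi_p(S_p)$. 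Once the hypothesis of the proposition is verified, both items (1) and (2) follow directly; in (1) the intersection over $\Phi^{-1}(\alpha)\cap A$ becomes an intersection over all of $\Phi^{-1}(\alpha)$. This is harmless, because if $p\notin S$ then $Y_p$ has $H$ finite, so $\Phi_p$ is a submersion on all of $Y_p$. Then $\Phi_p(S_p)=\emptyset$ and $\mathcal R_p=\Phi_p(Y_p)^\circ$, which agrees with $\Phi(M)^\circ$ near $\alpha$ by Lemma~\ref{lem:local-cone}.

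The key step is the local hypothesis. For $p\in S$ we need neighborhoods $V_p$ and $U_p$ with
\[\Phi(S\cap V_p)\cap\Phi(M)^\circ\cap U_p=\Phi_p(S_p)\cap\Phi_p(Y_p)^\circ\cap U_p.\]
By Theorem~\ref{thm:lnf}, choose $V_p$ equivariantly symplectomorphic to a neighborhood $V$ of $[1,0,0]$ in $Y_p$. The symplectomorphism intertwines moment maps up to the translation by $\Phi(p)$, and it matches singular points, since singularity is determined by stabilizers. Hence $\Phi(S\cap V_p)=\Phi_p(S_p\cap V)$. The inclusion of $\Phi_p(S_p\cap V)$ into $\Phi_p(S_p)$ is immediate. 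For the reverse inclusion near $\Phi(p)$, use Lemma~\ref{lem:open-at-0}: there is $U$ around $\Phi(p)$ with $\Phi_p(S_p)\cap U\subseteq\Phi_p(S_p\cap V)$. Finally, shrink $U_p\subseteq U$ so that $\Phi(M)\cap U_p=\Phi_p(Y_p)\cap U_p$, using Lemma~\ref{lem:local-cone}. This makes the interiors agree on $U_p$, and intersecting both sides gives the required equality.

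The main subtlety is this last use of Lemma~\ref{lem:local-cone}. As stated, it assumes $\Phi$ is proper to a convex open set, which here is $\ft^*$ itself. I will also need to check that interiors really match within $U_p$: equality of two closed sets within an open set $U_p$ implies their interiors agree within $U_p$, so this is fine. With the hypothesis verified, Proposition~\ref{prop:convexity-abstract}(1) gives item (1), and Proposition~\ref{prop:convexity-abstract}(2) gives item (2). Theorem~\ref{thm:convexity-of-chambers} then follows.
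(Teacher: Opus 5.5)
Your proposal is correct and follows the paper's proof essentially step for step. You apply Proposition~\ref{prop:convexity-abstract} to the closed set $S$ of singular points and verify its local hypothesis via the local normal form, Lemma~\ref{lem:open-at-0} and Lemma~\ref{lem:local-cone}. You identify $\mathcal R$ with $\Phi(M)^\circ\setminus\Phi(S)$ and $\mathcal R_p$ with $\Phi_p(Y_p)^\circ\setminus\Phi_p(S_p)$, and you note that points $p\notin S$ do not change the intersection; your observation that $\mathcal R_p=\ft^*$ when $H$ is finite simply makes explicit what the paper leaves implicit.
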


\begin{proof}
The set $S \subset M$ of  singular points of $\Phi$ is closed. Given $p \in M$,
let $S_p \subseteq Y_p$ denote the set of singular points  of the moment map $\Phi_p \colon Y_p \to \mathfrak t^*$
for the local model $Y_p$ associated to $p$. 
If $p \in S$, then by the local normal form theorem and Lemma~\ref{lem:open-at-0},  there exist open neighborhoods $V_p \subseteq M$ of $p$ and $U_p \subseteq \ft^*$ of $\Phi(p)$ such that $\Phi(S\cap V_p)\cap U_p = \Phi_p(S_p)\cap U_p$.
By Lemma~\ref{lem:local-cone},   $\Phi_p(Y_p) \cap U_p = \Phi(M) \cap U_p$  after possibly further shrinking $U_p$, and hence $$\Phi(S \cap V_p) \cap \Phi(M)^\circ \cap U_p = \Phi_p(S_p) \cap \Phi_p(Y_p)^\circ \cap U_p.$$ 
Fix $\alpha \in \Phi(M)$ and  $p \in  \Phi^{-1}(\alpha)$.  Since  the boundary of $\Phi_p(Y_p)$ consists of singular values of $\Phi_p$,  the set of points in $\Phi_p(Y_p)$ that are regular values of $\Phi_p$ is $\Phi_p(Y_p)^\circ \setminus \Phi_p(S_p)$.  Moreover, if $p \not\in S$ then $S_p = \emptyset$. 
Therefore, we may assume that $\underset{ p \in \Phi^{-1}(\alpha) \cap S}{\medcap} \mathcal R_p \cap U = \underset{ p \in \Phi^{-1}(\alpha)}{\medcap} \mathcal R_p \cap U$ for some neighborhood $U$ of $\alpha$.
Finally, since the boundary of $\Phi(M)$ consists of  singular values of $\Phi$,
    $ \mathcal{R} = \Phi(M)^\circ \setminus \Phi(S)$. 
Therefore, the claim follows immediately by applying Proposition~\ref{prop:convexity-abstract} to $S\subset M$.
\end{proof}

The following corollary of Theorem~\ref{thm:convexity-regular-values} will be used in proving Theorem~\ref{thm:skeleton-connected} in the last section. Recall that given a face $F$ of a convex set $P$, the {\bf relative interior} of $F$, denoted by $\relint(F)$, is the interior of $F$ inside the affine span of $F$ under the subspace topology.

\begin{corollary}\label{cor:short-facet}
     Let $(M,\omega,\Phi)$ be a complexity one $T$-space with proper moment map $\Phi$. Let $F_1$ be a facet of $\Phi(M)$ such that $F_1 \subseteq \Phi(M)_{\short}$. Fix a connected component $\mathcal{C}$ of the set of regular values $\mathcal{R}$ inside $\Phi(M)$ and let $F_2$ be a facet of $\overline{\mathcal{C}}$. If $F_2 \subseteq F_1$, then $F_2=F_1$.
\end{corollary}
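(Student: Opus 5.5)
Since $F_2 \subseteq F_1$ are both codimension-one convex sets in the same hyperplane, it suffices to show that $F_2$ is relatively closed and relatively open in $F_1$. Then connectedness of $F_1$ (it is convex) gives $F_2 = F_1$. Closedness is clear, because $F_2$ is a facet of the closed set $\overline{\mathcal C}$. The content is relative openness: every point $\alpha \in F_2$ must have a neighborhood $U$ with $F_1 \cap U \subseteq F_2$. Equivalently, no codimension-two face of $\overline{\mathcal C}$ (a relative boundary piece of $F_2$) can lie in the relative interior of $F_1$, or meet it in a way that splits $F_1$.

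To get this, I would use the local description in Theorem~\ref{thm:convexity-regular-values}(1). Fix $\alpha \in F_2$. Near $\alpha$ we have $\mathcal R \cap U_\alpha = \bigcap_{p \in \Phi^{-1}(\alpha)} \mathcal R_p \cap U_\alpha$. Since $\alpha \in F_1 \subseteq \Phi(M)_{\short}$, the fiber $\Phi^{-1}(\alpha)$ is a single orbit $T\cdot p$, so $\mathcal R \cap U_\alpha = \mathcal R_p \cap U_\alpha$. This reduces the question to the local model $Y_p = T\times_H \fh^\circ \times \C^{h+1}$ with cone $\Phi(p) + \fh^\circ + \sum_j \R_{\ge 0}\eta_j$. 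By Lemma~\ref{lem:local-cone}, $F_1$ corresponds near $\alpha$ to a facet $G$ of this cone. Every point of $\Phi_p^{-1}(G)$ is short, because $F_1 \subseteq \Phi(M)_{\short}$ and the orbit space of $Y_p$ near $p$ is identified with that of $M$ near $T\cdot p$.

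The key local claim is that the cone $C = \fh^\circ + \sum_j \R_{\ge 0}\eta_j$ contains no singular values of $\Phi_p$ in its interior near $G$. More precisely, there is a single chamber of $\mathcal R_p$ whose closure contains a full neighborhood of $\alpha$ in $G$. I would prove this with the argument of Lemma~\ref{lem:number-of-weights-in-a-short-face}. Write $I_G = \{j : \eta_j \text{ parallel to } G\}$. Shortness of $G$ forces $\{\eta_j\}_{j\in I_G}$ (together with $\fh^\circ$) to be linearly independent: otherwise a relation $\sum a_j\eta_j=0$ yields two distinct orbits over one point of $G$. Facet dimension count then gives $|I_G| = h-1$, so exactly one weight, say $\eta_0$, is not parallel to $G$. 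Hence every point of the cone near the relative interior of $G$ has the form $\nu + \sum_{j\in I_G} x_j\eta_j + x_0\eta_0$ with all $x_j>0$ and $x_0>0$ small. This expresses it as a positive combination of a basis $\{\eta_j\}_{j \in I_G}\cup\{\eta_0\}$. I still need to check that no smaller linearly independent set can represent it, so that by the criterion in the proof of Lemma~\ref{lem:rvlnf} it is regular. By Lemma~\ref{lem:rvlnf}, the chamber containing such points is $\fh^\circ$ plus an intersection of open simplicial cones, and one of these is $\sum_{I_G\cup\{0\}}\R_{>0}\eta_j$, whose closure contains all of $G$ near $\alpha$. For a point $\alpha$ on the relative boundary of $G$ (a lower face of $F_1$), the same reasoning applied to the facet $G$ shows that the chamber adjacent to $G$ is unique. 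So if $\alpha \in F_2$, the chamber $\mathcal C$, being the one adjacent to $F_2$ at $\alpha$, must be this unique chamber. Its closure then contains $F_1 \cap U_\alpha$, which gives $F_1 \cap U_\alpha \subseteq F_2$.

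The main obstacle is the step showing that the points near $G$ are regular values of $\Phi_p$, in particular when $\alpha$ lies in a lower-dimensional face of $F_1$. There some weights outside $I_G$ might combine with elements of $I_G$ into a lower-dimensional positive span that reaches arbitrarily close to $G$. I would handle this by taking a point $\beta$ near $\operatorname{relint}(G)$ that is a positive combination of fewer than $h$ independent weights. Its projection transverse to $G$ must then come from weights with $\langle\eta_j,\text{normal}\rangle>0$. Sliding $\beta$ onto $G$ would produce a point of $G$ with two distinct preimage orbits, one using $\eta_0$-type weights and one not, contradicting shortness exactly as in Lemma~\ref{lem:number-of-weights-in-a-short-face}.
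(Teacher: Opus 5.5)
There is a gap in the local analysis you flag as the main obstacle. It is avoidable, and once you avoid it your argument becomes the paper's.

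\textbf{Where the gap is.} Your overall plan matches the paper's: a local fact that only one chamber of $\mathcal R$ abuts a short facet, propagated by connectedness. The paper shows $\alpha\mapsto\mathcal C_\alpha$ is locally constant on $\relint(F_1)$; you show $F_2$ is clopen in $F_1$. The problem is at points $\alpha\in F_2$ on the relative boundary of $F_1$.
\begin{enumerate}
\item The weight count is wrong. There are $h+1$ weights and $|I_G|=h-1$, so exactly \emph{two} weights are transverse to $G$, not one (compare $\eta_1,\eta_2$ in Lemma~\ref{lem:cx-1-wall-crossing}).
\item Consequently the chamber next to $G$ is an intersection of at least two simplicial cones (cf.\ Corollary~\ref{cor:cx-1-rvlnf}(2)). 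Knowing that one of them, $\sum_{j\in I_G\cup\{0\}}\R_{>0}\eta_j$, has closure containing $G$ says nothing about the intersection. You would need every basis whose positive span contains $\beta$ (for $\beta$ near $\relint(G)$) to contain $I_G$.
\item The sliding argument fails. Let the transverse coefficients of a positive combination of $J$, with $|J|<h$, go to zero. The limit lies in $\sum_{j\in J\cap I_G}\R_{\ge 0}\eta_j\subseteq G$, and the transverse weights have dropped out, so no second orbit appears and shortness is not contradicted. The correct reason is different: $\sum_{j\in J}\R_{\ge0}\eta_j$ meets the hyperplane of $G$ only in $\sum_{j\in J\cap I_G}\R_{\ge0}\eta_j$. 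That set lies in the relative boundary of $G$, because $I_G$ is independent and $|J\cap I_G|\le h-2$.
\item Identifying $\mathcal C$ with the $G$-adjacent chamber at such an $\alpha$ silently requires a point of $F_2$ in $\relint(G)$.
\end{enumerate}

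\textbf{The fix.} That last observation is also the repair, and it removes the obstacle entirely. Since $F_2\subseteq F_1$ have the same dimension, $\relint(F_2)\subseteq\relint(F_1)$. So $F_2\cap\relint(F_1)$ is a nonempty closed subset of the connected set $\relint(F_1)$. You only need relative openness at points $\alpha\in F_2\cap\relint(F_1)$; afterwards $F_1=\overline{\relint(F_1)}\subseteq F_2$.

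At such $\alpha$ the local cone is a half-space. Together with shortness this forces $h=1$, with $\eta_0,\eta_1$ positive multiples of each other. Hence $\mathcal R\cap U_\alpha$ is an open half-space intersected with a convex neighborhood $U_\alpha$. This set is connected and meets $\mathcal C$ (since $\alpha\in F_2\subseteq\overline{\mathcal C}$), so it lies in $\mathcal C$. This gives $F_1\cap U_\alpha\subseteq\overline{\mathcal C}\cap\mathcal H=F_2$, where $\mathcal H$ is the common affine span of $F_1$ and $F_2$. This is exactly the paper's proof, and no analysis at lower-dimensional faces of $F_1$ is needed.
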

\begin{proof}
     Fix $\alpha \in \relint(F_1)$, and take any point $p \in \Phi^{-1}(\alpha)$. Let $Y= T \times_H \fh^\circ \times \C^{h+1}$ be the local model associated to $p$ with moment map $\Phi_Y([t,\nu,z]) = \alpha+ \frac{1}{2}\sum_{j=0}^h \eta_j |z_j|^2 + \nu$, where $\eta_0,\ldots,\eta_h$ are isotropy weights at $p$. Since $F_1$ is a facet of $\Phi(M)$, Lemma~\ref{lem:local-cone} implies that $h=1$. Since $\alpha \in \Phi(M)_{\short}$, Lemma~\ref{lem:tall/exc-criteria} implies that $\eta_0,\eta_1$ are positive multiples of each other. Hence, the set of regular values $\mathcal{R}_Y$ of $\Phi_Y$ inside $\Phi_Y(Y)$ can be written as \[\mathcal{R}_Y = \alpha+ \R_{>0} \eta_0 + \fh^\circ.\]
     
     Since $\alpha \in \Phi(M)_{\short}$, item (1) of Theorem~\ref{thm:convexity-regular-values} implies that there exists an open neighborhood $U$ of $\alpha$ such that $\mathcal{R} \cap U = \mathcal{R}_Y \cap U$. Since $\mathcal{R}_Y$ is connected, there exists a unique connected component $\mathcal{C}_\alpha$ of $\mathcal{R}$ such that $\mathcal{R} \cap U = \mathcal{C}_\alpha \cap U$ and $\alpha \in \overline{\mathcal{C}}_\alpha$, so there exists a well-defined function assigning each $\alpha \in \relint(F_1)$ to a connected component $\mathcal{C}_\alpha$ of $\mathcal{R}$. Moreover, this function is locally constant. Since $\relint(F_1)$ is connected, this function must be constant. Since $F_2 \subseteq F_1$ are both polyhedral sets of the same dimension, there exists $\beta \in F_2 \cap \relint(F_1)$, so for sufficiently small neighborhood $U_\beta$ of $\beta$, $\mathcal{C} \cap U_\beta$ is nonempty and $\mathcal{R} \cap U_\beta = \mathcal{C}_\beta \cap U_\beta$. Hence, $\mathcal{C} ,\mathcal{C}_\beta$ are the same connected components. Therefore, for any $\alpha \in \relint(F_1)$, $\alpha \in\overline{\mathcal{C}}_\alpha=\overline{\mathcal{C}}$ and thus $F_1 \subset \overline{\mathcal{C}}$.  Let $\mathcal{H}$ be the supporting hyperplane through $F_1$ and $F_2$. Then $F_1 \subseteq \mathcal{H} \cap \overline{\mathcal{C}} =F_2$.
\end{proof}

We end this section by discussing the special case of Lemma~\ref{lem:rvlnf} in the complexity one setting.

\begin{corollary}\label{cor:cx-1-rvlnf}
Let $Y = T \times_H \fh^\circ \times \C^{h+1}$ be a complexity one local model with moment map $\Phi_Y([t,\nu,z]) =\frac{1}{2} \sum_{j=0}^{h} \eta_j |z_j|^2 + \nu$. Let $\xi_0,\ldots,\xi_h$ be the exponents of the defining monomial of $Y$. Let $\mathcal{C}$ be a connected component of regular value of $\Phi_Y$ inside $\Phi_Y(Y).$
\begin{enumerate}
    \item If $Y$ is tall, then there exists a unique $i \in \{0,\ldots,h\}$ such that $\mathcal{C}= \fh^\circ+\sum_{j \neq i} \R_{> 0} \eta_j$. Moreover, $\xi_{i} \neq 0$.
    \item If $Y$ is not tall, then there exist exactly two distinct indices $i,i' \in \{0,\ldots,h\}$ such that $\mathcal{C}= \fh^\circ+(\sum_{j \neq i} \R_{> 0} \eta_j \cap \sum_{j \neq i'} \R_{> 0} \eta_j)$. Moreover, $\xi_{i}\xi_{i'}<0$.
\end{enumerate}
\end{corollary}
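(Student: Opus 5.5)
Reduce to the $H$-part: since $\Phi_Y([t,\nu,z]) = \nu + \Phi_H(z)$, every connected component of regular values of $\Phi_Y$ has the form $\fh^\circ + \mathcal{C}_H$. Here $\mathcal{C}_H$ is a connected component of regular values of $\Phi_H(z)=\frac12\sum_{j=0}^h |z_j|^2\eta_j$ on $\C^{h+1}$, with $\dim\fh^* = h$. Everything is then computed with Lemma~\ref{lem:rvlnf}. Fix $\beta\in\mathcal{C}_H$; the component is the intersection of the open cones $\sum_{i\in I}\R_{>0}\eta_i$ over all $I$ with $\beta$ in that cone. As in the proof of Lemma~\ref{lem:rvlnf}, for a regular $\beta$ it suffices to intersect over those $I$ for which $\{\eta_i\}_{i\in I}$ is a basis. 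Indeed, any $I$ with $\beta\in\sum_{I}\R_{>0}\eta_i$ contains, by Lemma~\ref{lem:span-one-less} and regularity, a basis subset $I_0$ with $\beta\in\sum_{I_0}\R_{>0}\eta_i$, and $\sum_{I_0}\R_{>0}\eta_i\subseteq \sum_I\R_{>0}\eta_i$. A basis has exactly $h$ of the $h+1$ weights, so it is $\{0,\dots,h\}\setminus\{i\}$ for some $i$, and it is linearly independent exactly when $\xi_i\neq 0$ (Lemma~\ref{lem:defining-monomial}).

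Next I determine which $i$ occur, using the unique relation $\sum_j \xi_j\eta_j=0$. Write $\beta = \sum_j c_j\eta_j$ with all $c_j>0$ (possible since $\beta$ is interior). The solution set of $\beta=\sum_j a_j\eta_j$ is the line $a = c + s\xi$. The set $\{j\neq i\}$ is a valid basis with $\beta$ in its positive cone iff some $s$ gives $a_i=0$ and $a_j>0$ for $j\ne i$. Regularity of $\beta$ means no $a$ on this line has two zero coordinates. The set of $s$ with $a\ge 0$ is a closed interval $J=\{s: c_j+s\xi_j\ge0\ \forall j\}$ containing $0$ in its interior. Each finite endpoint of $J$ is the zero of exactly one coordinate, and that coordinate has $\xi_i\neq0$. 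So the valid indices $i$ correspond bijectively to the finite endpoints of $J$.

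\textbf{Tall case.} By Lemma~\ref{lem:defining-monomial}, say $\xi\ge0$, not all zero. Then $J=[s_-,\infty)$ with $s_-<0$ finite, giving exactly one index $i$, with $\xi_i\ne0$. Hence $\mathcal{C}_H=\sum_{j\neq i}\R_{>0}\eta_j$.

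\textbf{Non-tall case.} $\xi$ has entries of both signs, so $J=[s_-,s_+]$ is bounded with two distinct endpoints. This gives indices $i\ne i'$, where $i$ is the coordinate vanishing at $s_+$ (so $\xi_i<0$) and $i'$ the one vanishing at $s_-$ (so $\xi_{i'}>0$). Thus $\xi_i\xi_{i'}<0$, and $\mathcal{C}_H$ is the intersection of the two cones. Distinctness holds because a single coordinate cannot vanish at both endpoints when $J$ has nonempty interior. Uniqueness of $i$, respectively of the pair $\{i,i'\}$, follows from the bijection above.

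The main obstacle is the regularity bookkeeping: justifying that only basis index sets matter, and that the endpoints correspond to single coordinates (no ties). Both come from $\beta$ being regular, since a tie at an endpoint would express $\beta$ as a positive combination of $h-1$ weights, making it singular.
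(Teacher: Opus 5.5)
Your proof is correct and follows essentially the paper's route: reduce to $\Phi_H$, apply Lemma~\ref{lem:rvlnf}, and move along the one-dimensional solution line $a=c+s\xi$, using regularity to exclude ties. Your bijection between finite endpoints of $J$ and valid indices is a cleaner way to count those indices than the paper's sign argument, and you also explicitly discard the non-basis index sets (e.g.\ $I=\{0,\dots,h\}$) in the intersection from Lemma~\ref{lem:rvlnf}, which the paper leaves implicit. One small slip: regularity only forbids two vanishing coordinates for nonnegative $a$ (i.e.\ $s\in J$), not everywhere on the line (for $\eta_0=(1,0)$, $\eta_1=(0,1)$, $\eta_2=(-1,-1)$, $\beta=(1,1)$ the point $a=(0,0,-1)$ occurs). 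You only use it at endpoints of $J$, however, so the argument stands.
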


\begin{proof}
    Let $\beta \in \mathcal{C}$ be a regular value. Since $\beta \in \fh^\circ +\sum_{j=0}^h \R_{\geq 0}\eta_j$, Lemma~\ref{lem:span-one-less} implies that there exists $i \in \{0,\ldots,h\}$ such that $\{\eta_j: j\neq i\}$ is a basis of $\fh^*$ and $\beta= \nu+\sum_{j \neq i} x_j \eta_j$ for some $\nu \in \fh^\circ$ and $x_j >0$. Moreover, if $\xi_i =0$, then $\sum_{j \neq i} \xi_j \eta_j =0$, which contradicts the fact that $\{\eta_j: j \neq i\}$ is a basis of $\fh^*$. Hence, $\xi_i \neq 0$.     
    If there exists a different $i'\in \{0,\ldots,h\}$ such that $\beta \in \fh^\circ+\sum_{j \neq i'} \R_{> 0} \eta_j$, then there exists $a_j \in \R$ such that $\sum_{j=0}^h a_j \eta_j=0$ with $a_i >0$ and $a_{i'}<0$. Hence, the last claim of Lemma~\ref{lem:defining-monomial} and Lemma~\ref{lem:rvlnf} together imply that when $Y$ is tall, $\mathcal{C}= \fh^\circ+\sum_{j \neq i} \R_{> 0} \eta_j$ for a unique $i \in \{0,\ldots,h\}$.

    When $Y$ is not tall, notice that there exists a unique $t_0 \in \R$ such that $t_0\xi_i >0$, $x_j+t_0\xi_j\geq0$ for all $j \neq i$, and $x_{i'}+t_0\xi_{i'}=0$ for at least one $i'$. Since $\beta$ is a regular value, such $i'$ must be unique, so $\beta \in \fh^\circ+\sum_{j \neq i'} \R_{> 0} \eta_j$ and $\xi_i\xi_{i'} <0$. Finally, if there is a third index $k \neq i,i'$ such that $\beta \in \fh^\circ+\sum_{j \neq k} \R_{> 0} \eta_j$, then the same argument implies that $\xi_i \xi_k<0$ and $\xi_{i'}\xi_k<0$, which is impossible. Hence, by Lemma~\ref{lem:rvlnf},  $\mathcal{C}= \fh^\circ+(\sum_{j \neq i} \R_{> 0} \eta_j \cap \sum_{j \neq i'} \R_{> 0} \eta_j)$. 
\end{proof}

\section{Connectedness}\label{sec:connectedness}
In this section, we prove Theorem~\ref{thm:n-1-skeleton} which states that the moment image of the set of exceptional orbits with codimension at least $1$ is connected. This result will imply Theorem~\ref{thm:skeleton-connected}. Then, we use Theorem~\ref{thm:skeleton-connected} and results in~\cite{LPT} to prove Theorem~\ref{thm:max-short-bound}.

Let $(M,\omega,\Phi)$ be a complexity one $T$-space. Recall that $(M,\omega,\Phi)$ is {\bf maximally short} if $\Phi(M)_{\short} = \partial \Phi(M)$, where $\Phi(M)_{\short}$ denotes the set of points in $\Phi(M)$ whose reduced space is a singleton. The {\bf m-skeleton} $\Sigma_m$ is the set of tall exceptional orbits that are at most $m$-dimensional. 
We need the following two results to prove Theorem~\ref{thm:n-1-skeleton}.

\begin{lemma}\label{lem:regular-values}
    Let $(M,\omega,\Phi)$ be a connected $(2n+2)$-dimensional complexity one $T$-space with proper moment map and let $\Sigma_{n-1}$ be its $(n-1)$-skeleton. Let $\mathcal{R}$ denote the set of regular values of $\Phi$ inside $\Phi(M)$. Then $\mathcal{R} = \Phi(M)^\circ \setminus \Phi(\Sigma_{n-1})$.
\end{lemma}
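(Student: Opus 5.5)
The plan is to prove both inclusions by working in the local model and using Lemma~\ref{lem:tall/exc-criteria}. Here $\dim T = n$, and a point $p$ whose stabilizer has dimension $h$ has orbit dimension $n-h$. So the orbit is at most $(n-1)$-dimensional exactly when $h \geq 1$.

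Since the boundary of $\Phi(M)$ consists of singular values, we have $\mathcal{R} = \Phi(M)^\circ \setminus \Phi(S)$, where $S$ is the set of singular points of $\Phi$. It therefore suffices to show that
\[
\Phi(S) \cap \Phi(M)^\circ = \Phi(\Sigma_{n-1}) \cap \Phi(M)^\circ .
\]

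For the inclusion $\supseteq$, take an exceptional orbit of dimension at most $n-1$. Its stabilizer has positive dimension, so $d\Phi$ is not surjective there, and the orbit is singular.

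For the inclusion $\subseteq$, let $\alpha \in \Phi(M)^\circ$ be a singular value, and pick a singular point $p \in \Phi^{-1}(\alpha)$. Then $h = \dim H \geq 1$. Since $\alpha$ lies in the interior and short values lie on the boundary by \cite[Lemma 5.4]{KT01}, every point of $\Phi^{-1}(\alpha)$ is tall. If $p$ itself is exceptional we are done. Otherwise I will produce another point in the same fiber which is exceptional with positive-dimensional stabilizer.

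To do this, work in the local model $Y = T \times_H \fh^\circ \times \C^{h+1}$ at $p$, where $p = [1,0,0]$ and all $z_j = 0$. By Lemma~\ref{lem:tall/exc-criteria}(2), $p$ fails to be exceptional only if $\sum_j |\xi_j| \leq 1$. That means exactly one $\xi_j$ is nonzero, say $\xi_0 = \pm 1$. By Lemma~\ref{lem:defining-monomial}, $\eta_0 = 0$ and $\eta_1,\dots,\eta_h$ form a basis of $\fh^*$, and the weight $0$ direction spans the fiber. The fiber $\Phi_Y^{-1}(\alpha)$ near $p$ then consists of points $[t,0,(z_0,0,\dots,0)]$. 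These all have the same stabilizer, namely the one cut out by $z_1 = \dots = z_h = 0$. Moving along this fiber by increasing $|z_0|$ leaves the local chart, so a local argument alone will not work.

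This is the main obstacle, and I would handle it globally. The set $X$ of points in $\Phi^{-1}(\alpha)$ with stabilizer containing the identity component $H^0$ is a union of components of the fixed set $M^{H^0}$ intersected with $\Phi^{-1}(\alpha)$. The reduced space $\Phi^{-1}(\alpha)/T$ is a compact connected surface (it is 2-dimensional since $\alpha$ is tall). The image of $X$ in it is closed, and by the local picture it is open near non-exceptional points. It cannot be all of the reduced space, because otherwise $\alpha$ would be a singular value on an open set, contradicting the density of regular values. Hence the image of $X$ has a boundary point $q$.

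At $q$ the local picture just described must fail, so $q$ is exceptional by the dichotomy above. Its stabilizer contains $H^0$, so its orbit has dimension at most $n-1$. Since $q$ is also tall, $q \in \Sigma_{n-1}$ with $\Phi(q) = \alpha$, which gives the inclusion $\subseteq$.
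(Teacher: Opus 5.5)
Your reduction and the inclusion $\Phi(\Sigma_{n-1})\cap\Phi(M)^\circ\subseteq\Phi(S)$ are fine. Your local computation is also correct up to the point where you call the non-exceptional case the ``main obstacle''. But that case cannot occur, and the global argument you substitute for the missing step is not valid as written.

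\textbf{The non-exceptional case is impossible.} If $\xi=\pm e_0$, then $\eta_0=0$ and $\eta_1,\dots,\eta_h$ is a basis of $\fh^*$. So $\Phi_Y(Y)=\alpha+\fh^\circ+\sum_{j\ge 1}\R_{\ge0}\eta_j$ is $\fh^\circ$ times a simplicial cone of dimension $h\ge1$, and $\alpha$ lies on its boundary. By Lemma~\ref{lem:local-cone}, $\Phi(M)$ agrees with this set near $\alpha$, which contradicts $\alpha\in\Phi(M)^\circ$.

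Equivalently, as in the first paragraph of the proof of Lemma~\ref{lem:maximally-short-weights}, interiority of $\alpha$ forces $\sum_j\R_{\ge0}\eta_j=\fh^*$. This gives a relation $\sum_j a_j\eta_j=0$ with all $a_j>0$, so every $\xi_j\neq0$ and $\sum_j|\xi_j|\ge h+1\ge 2$. By Lemma~\ref{lem:tall/exc-criteria}(2), every singular point over an interior value is therefore exceptional. This is exactly the fact the paper imports from \cite[Example 1.4]{KT14}. Combined with tallness from \cite[Lemma 5.4]{KT01}, it finishes the proof at once. Your local route, completed this way, is a self-contained replacement for that citation.

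\textbf{The gap in the global detour.} You rule out ``the image of $X$ is all of the reduced space'' by saying that otherwise $\alpha$ would be a singular value on an open set, contradicting density of regular values. That does not work. $\alpha$ is a single value, and $\Phi^{-1}(\alpha)\subseteq M^{H^0}$ produces no open set of singular values: in the non-exceptional local picture, $\Phi(M^{H^0})$ near $\alpha$ is the codimension-$h$ affine subspace $\alpha+\fh^\circ$. So Sard's theorem yields no contradiction.

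The only way to exclude ``every point of $\Phi^{-1}(\alpha)$ lies in $X$ and is non-exceptional'' is the local-cone argument above. That argument already excludes the non-exceptional case at $p$ itself, so the open/closed argument on the reduced space becomes unnecessary.

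A smaller point: you assert $\Phi^{-1}(\alpha)/T$ is a surface ``since $\alpha$ is tall''. Tallness only says the reduced space is not a point. All your argument would need is connectedness of the fiber, which holds for proper moment maps.
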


\begin{proof}
    A point $p \in M$ is singular if and only if the dimension of its stabilizer group is at least one. Hence, any point whose orbit lies in $\Sigma_{n-1}$ is singular. On the other hand, if $\alpha \in \Phi(M)^\circ$ is a singular value of $\Phi$, then there exists a point $p \in \Phi^{-1}(\alpha)$ whose stabilizer group has dimension at least one. By~\cite[Example 1.4]{KT14}, $p$ is an exceptional point. Moreover, by~\cite[Lemma 5.4]{KT01} any point in $\Phi^{-1}(\Phi(M)^\circ)$ is tall, so the orbit through $p$ is in $\Sigma_{n-1}$ and $\alpha \in \Phi(\Sigma_{n-1})$. We conclude that the set of singular values of $\Phi$ in $\Phi(M)^\circ$ is equal to $\Phi(\Sigma_{n-1})$. Since $\partial \Phi(M)$ consists of singular values of $\Phi$, the claim follows immediately.
\end{proof}

Another key ingredient is a wall-crossing formula of the Duistermaat-Heckman function in the complexity one setting.

\begin{lemma}\label{lem:cx-1-wall-crossing}
    Let $(M,\omega,\Phi)$ be a complexity one $T$-space with proper moment map. Let $f$ be the Duistermaat-Heckman polynomial. Let $\mathcal{C} \subset \Phi(M)$ be a connected component of regular values of $\Phi$ such that a facet $F \subset \Phi(M)_{\short}$ is also a facet of $\overline{\mathcal{C}}$.  Let $\mathcal{H}_{X}(c) = \{\alpha \in \ft^*: \langle \alpha, X \rangle = c\}$ be the supporting hyperplane through $F$ with an inward-pointing normal vector $X \in \ft$, i.e., $\Phi(M) \subset  \mathcal{H}_{X}^+(c):=\{\alpha \in \ft^*: \langle \alpha, X \rangle > c\}$. Let $f_{\mathcal{C}}$ be the polynomial such that $f|_{\mathcal{C}} =f_{\mathcal{C}}|_{\mathcal{C}}$. Then, there exists $A >0$ such that
    \[f_{\mathcal{C}} =  \frac{\langle \cdot, X \rangle - c}{A}.\]
\end{lemma}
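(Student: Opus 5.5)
We apply the wall-crossing formula (Theorem~\ref{thm:wall-crossing}) across the short facet $F$, treating the exterior of $\Phi(M)$ as a fictitious chamber $\De_-$ on which the Duistermaat--Heckman function vanishes.

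First, by Corollary~\ref{cor:short-facet}, the facet $F$ of $\overline{\mathcal C}$ is the whole facet of $\Phi(M)$; in particular $\mathcal C$ is the unique chamber adjacent to $F$. The region just outside $F$ (in $\mathcal H_X^-(c)$) is a connected region where $f \equiv 0$, so we set $\De_- :=$ a small neighborhood of $\relint(F)$ outside $\Phi(M)$, with $f_- = 0$, and $\De_+ := \mathcal C$. The normal $X$ points out of $\De_-$ and into $\De_+$, and $L_X(W)=c$ with $W=F$.

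Next we verify the hypotheses of Theorem~\ref{thm:wall-crossing}. By Corollary~\ref{cor:preimage-of-a-face}, since $F\subset\Phi(M)_\short$, the preimage $X_F:=\Phi^{-1}(F)$ is a connected symplectic toric $(T/H)$-manifold with $\dim H=1$. Hence $\dim X_F = 2\dim(T/H) = 2\dim F$, so the hypothesis $\dim \Phi^{-1}(W) = 2\dim W$ holds. Here $n+1 = \frac12\dim M$, so $m = (n+1)-\dim F = (n+1)-(n-1) = 2$. The general formula is stated for a wall between two chambers, but the proof in \cite{GLS} is local near $\Phi^{-1}(W)$ (reduction of the $S^1_X$-normal structure), so it applies equally when one side is empty. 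Applying it gives
\[f_{\mathcal C} = \frac{1}{1!}\,\frac{1}{L_X(\alpha_1)L_X(\alpha_2)}\,\frac{\langle\cdot,X\rangle - c}{|\det(\alpha_3,\ldots,\alpha_{n+1},\alpha)|}.\]

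It remains to show $A := L_X(\alpha_1)L_X(\alpha_2)\,c_\alpha>0$. Clearly $c_\alpha>0$, since $\alpha_3,\dots,\alpha_{n+1}$ span $\ker L_X$ and $L_X(\alpha)=1$. For the other factor, take a $T$-fixed point $p\in X_F$ (one exists since $X_F$ is a toric manifold with proper moment map, at least after passing to a vertex; otherwise we use the local model at a point over $\relint F$ as in the proof of Corollary~\ref{cor:short-facet}). By Lemma~\ref{lem:number-of-weights-in-a-short-face}, exactly $\dim F$ weights at $p$ are parallel to $F$, namely $\alpha_3,\ldots,\alpha_{n+1}$. The moment cone at $p$ lies in $\mathcal H_X^+(c)$ by Lemma~\ref{lem:local-cone}, so $L_X(\alpha_1),L_X(\alpha_2)>0$. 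Hence $A>0$.

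The main obstacle is justifying the wall-crossing formula when $\De_-$ lies outside the moment image. If citing \cite{GLS} directly is unsatisfactory, the fallback is to compute $f$ near a point of $\relint F$ from the local model $T\times_H\fh^\circ\times\C^2$. The two weights $\eta_0,\eta_1$ are positive multiples of each other (as in the proof of Corollary~\ref{cor:short-facet}), so the Duistermaat--Heckman measure of the local model is $\R_{>0}$-homogeneous. A direct computation of the pushforward of Lebesgue measure on $\C^2$ under $z\mapsto \frac12(a|z_0|^2+b|z_1|^2)$ gives a density that is linear in the distance to $F$. Since $f$ is a polynomial on $\mathcal C$, agreement near $F$ then gives the global linear formula.
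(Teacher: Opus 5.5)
Your proposal is correct and follows essentially the same route as the paper. The paper also applies Theorem~\ref{thm:wall-crossing} across $F$, taking $\Delta_-$ to be the exterior of $\Phi(M)$ where $f=0$. It uses Corollary~\ref{cor:preimage-of-a-face} for the hypothesis $\dim \Phi^{-1}(F)=2\dim F$, Lemma~\ref{lem:number-of-weights-in-a-short-face} to get $m=2$, and the inward-pointing normal to conclude that $L_X(\alpha_1)L_X(\alpha_2)>0$.

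Your fallback is also valid and is a bit more self-contained. It computes the density from the local model over a point of $\relint(F)$, where the two weights are positive multiples of each other, and then extends the result by polynomiality on $\mathcal{C}$. This avoids both needing a $T$-fixed point in $\Phi^{-1}(F)$ (which the paper simply assumes) and applying the wall-crossing formula with an empty chamber.
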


\begin{proof}
    By Corollary~\ref{cor:preimage-of-a-face}, since $F \subset \Phi(M)_{\short}$, $\Phi^{-1}(F)$ is a connected manifold of dimension $2\dim(F)$. Let $p \in \Phi^{-1}(F)$ be a $T$-fixed point with isotropy weights $\eta_1,\ldots,\eta_n$. By Lemma~\ref{lem:number-of-weights-in-a-short-face} and since $X$ is an inward-pointing normal, we may assume that $\langle \eta_i, X \rangle =0$ for $i \geq 3$ and $\langle \eta_i, X \rangle  >0$ for $i=1,2$. We apply Theorem~\ref{thm:wall-crossing} by crossing the wall $F$ that separates $\De^- = \ft^* \setminus \Phi(M)$ and $\De^+ = \mathcal{C}$. Since $f$ is supported on $\Phi(M)$, for any $\alpha \in \ft^*$ such that $\langle \alpha,X \rangle =1$,
    \[f_{\mathcal{C}} -0 = \frac{1}{\langle\eta_1,X\rangle \langle\eta_2, X \rangle}\frac{\langle \cdot , X \rangle - c}{|\det(\eta_3,\ldots,\eta_n,\alpha)|}.\]
    The claim follows by setting $ A = \langle\eta_1,X\rangle \langle\eta_2, X \rangle|\det(\eta_3,\ldots,\eta_n,\alpha)|>0$.
\end{proof}

Now, we are ready to prove our main result in this section.
\begin{theorem}\label{thm:n-1-skeleton}
    Let $(M,\omega,\Phi)$ be a compact, connected, $(2n+2)$-dimensional maximally short complexity one $T$-space. Let $\Sigma_{n-1}$ be its $(n-1)$-skeleton. If $n \geq 2$, then $\Phi(\Sigma_{n-1})$ is connected.
\end{theorem}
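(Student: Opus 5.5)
The plan is to argue by contradiction, combining the chamber structure of Theorem~\ref{thm:convexity-regular-values}, Lemma~\ref{lem:regular-values}, and the wall-crossing formula in the form of Lemma~\ref{lem:cx-1-wall-crossing}. Write $\Delta = \Phi(M)$, a convex polytope since $M$ is compact. By Lemma~\ref{lem:regular-values}, $\mathcal{R} = \Delta^\circ \setminus \Phi(\Sigma_{n-1})$. By Theorem~\ref{thm:convexity-regular-values}, the closures $\overline{\mathcal C}$ of the chambers are convex polytopes covering $\Delta$. Hence $\Phi(\Sigma_{n-1})$ is exactly the union of the walls, i.e.\ the facets of chambers lying in $\Delta^\circ$ (taking closures appropriately).

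First observe that $\Phi(\Sigma_{n-1})$ is nonempty. If it were empty, $\mathcal R=\Delta^\circ$ would be a single chamber. Every facet of $\Delta$ is short (maximally short), so Lemma~\ref{lem:cx-1-wall-crossing} applied to two distinct facets $F,F'$ with inward normals $X,X'$ gives
\[
\frac{\langle\cdot,X\rangle-c}{A}=\frac{\langle\cdot,X'\rangle-c'}{A'}.
\]
This forces the two facets to be parallel and coincide, contradicting $\dim \ft^*=n\ge 2$, since a polytope has at least $n+1\ge 3$ facets.

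More generally, I would show that every chamber $\mathcal C$ has at most one facet contained in $\partial\Delta$. Two such facets would be short facets of $\Delta$ by Corollary~\ref{cor:short-facet}, and the same polynomial comparison forces them to be equal.

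Now suppose $\Phi(\Sigma_{n-1}) = K_1 \sqcup K_2$ with $K_1,K_2$ nonempty, closed and disjoint. Consider the complement $\Delta\setminus\Phi(\Sigma_{n-1})$. I would argue that some chamber $\mathcal C$ has boundary meeting both $K_1$ and $K_2$. Walls from $K_1$ and $K_2$ are disjoint closed sets, so some connected component of $\Delta^\circ\setminus(K_1\cup K_2)$, which is a chamber, must separate them. Take $\overline{\mathcal C}$ to be a convex polytope whose boundary is split as (facets in $K_1$) $\cup$ (facets in $K_2$) $\cup$ (facets in $\partial\Delta$). Here $\partial\overline{\mathcal C}$ is connected because $n\ge 2$ (it is homeomorphic to $S^{n-1}$). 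So the facets from $K_1$ and from $K_2$ cannot meet each other and must be joined through facets lying in $\partial\Delta$. By the previous step there is at most one such facet $F$. Then $\partial \overline{\mathcal C}\setminus \relint F$, together with the separation, leaves $K_1$-facets and $K_2$-facets both adjacent only through $F$. Removing the open facet $\relint F$ from a sphere leaves a connected (closed disk) set, which is the union of the closed $K_1$- and $K_2$-facets together with $\partial F$. Meanwhile $\partial F\subset\partial\Delta$ meets walls only at their boundary. I would then need closures: $K_i$ is closed, and $\partial F$ consists of points lying in the closures of walls, hence in $K_1\cup K_2$ (which is closed, containing the relevant limit points), giving a contradiction to disconnectedness.

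The main obstacle is the topological bookkeeping in that last step: making precise that $\Phi(\Sigma_{n-1})$ equals the closed union of walls including their boundary portions on $\partial\Delta$, and producing a single chamber adjacent to both pieces. For the latter I would try choosing a shortest path in $\Delta^\circ$ from $K_1$ to $K_2$ and taking the chamber containing its interior. A cleaner alternative is to use that $\Delta\setminus(\text{one facet})$ is contractible and apply a Mayer–Vietoris-type count, but I would first try the direct argument.
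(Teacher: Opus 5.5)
\textbf{Where the argument breaks.} Your overall plan tracks the paper's: find a chamber $\mathcal C$ whose closure meets both pieces, show $\overline{\mathcal C}$ has at most one facet in $\partial\Delta$, and contradict connectedness of $\partial\overline{\mathcal C}$. The "at most one boundary facet" step is correct; it is exactly the paper's final wall-crossing contradiction.

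The gap is the claim that the $K_1$-walls and $K_2$-walls "must be joined through facets lying in $\partial\Delta$." Since tall values lie in $\Delta^\circ$, the sets $K_1,K_2$ are closed only in $\Delta^\circ$, and their closures can meet on $\partial\Delta$. So a priori a $K_1$-wall and a $K_2$-wall can meet along a lower-dimensional face of $\overline{\mathcal C}$ lying in $\partial\Delta$ that is contained in no facet of $\overline{\mathcal C}$ in $\partial\Delta$. Convexity alone does not exclude this. For $n=2$, take a quadrilateral chamber with two opposite vertices on $\partial\Delta$, two adjacent edges in $K_1$ and the other two in $K_2$. It has no edge in $\partial\Delta$ at all, so your step gives nothing.

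Your closing move does not repair this. $\partial F\subset\partial\Delta$ is disjoint from $K_1\cup K_2\subset\Delta^\circ$, which is closed in $\Delta^\circ$ but not in $\ft^*$. Hence "$\partial F\subset K_1\cup K_2$" is false, and the closures $\overline{K_i}$ need not remain disjoint.

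\textbf{The missing idea.} You need the local fact the paper proves: every $\alpha\in\partial\overline{\mathcal C}\cap\partial\Delta$ lies in a facet of $\overline{\mathcal C}$ that is also a facet of $\Delta$. Since $\alpha$ is short, $\Phi^{-1}(\alpha)$ is a single orbit. Theorem~\ref{thm:convexity-regular-values}(1) and Corollary~\ref{cor:cx-1-rvlnf}(2) then give, near $\alpha$, $\overline{\mathcal C}=\alpha+\fh^\circ+\bigl(\sum_{i\neq 0}\R_{\ge 0}\eta_i\bigr)\cap\bigl(\sum_{i\neq h}\R_{\ge 0}\eta_i\bigr)$ with $\xi_0\xi_h<0$.

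By Lemma~\ref{lem:maximally-short-weights}, $\eta_1,\dots,\eta_{h-1}$ span a hyperplane of $\fh^*$ containing neither $\eta_0$ nor $\eta_h$. The relation $\sum_i\xi_i\eta_i=0$ with $\xi_0\xi_h<0$ puts $\eta_0,\eta_h$ strictly on the same side of it. So $\alpha+\fh^\circ+\sum_{i=1}^{h-1}\R\eta_i$ supports both $\overline{\mathcal C}$ and $\Delta$ and cuts out facets $F_2\subseteq F_1$ of each. Corollary~\ref{cor:short-facet} then gives $F_2=F_1$. This is where maximal shortness enters at lower-dimensional boundary faces, through the weights, and not merely as "every facet is short."

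\textbf{Finishing the argument.} With this fact, $\partial\overline{\mathcal C}\cap\partial\Delta$ is contained in the at most one such facet $F$. Then $\partial\overline{\mathcal C}\setminus F$ is connected; note you must remove the closed facet, not just $\relint F$. It lies in $\partial\mathcal C\cap\Delta^\circ\subset K_1\sqcup K_2$ and meets both pieces, which is the contradiction. The paper phrases the same step as producing a second facet $F'$ and then comparing the wall-crossing formulas.

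\textbf{A smaller point.} A shortest path from $K_1$ to $K_2$ need not exist, since their distance can be $0$ at $\partial\Delta$. Instead, take any path in $\Delta^\circ$ from $K_1$ to $K_2$, and use the open subarc between its last visit to $K_1$ and its next visit to $K_2$. That subarc lies in $\mathcal R$, hence in a single chamber whose closure meets both pieces. Alternatively, use the paper's decomposition of $\Delta^\circ$ into disjoint relatively closed sets.
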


\begin{proof}
    We prove by contradiction. Suppose not, then $\Phi(\Sigma_{n-1})$ has connected components $S_1,\ldots,S_k$ for some $k \geq 2$. Our first claim is that there exists a connected component $\mathcal{C}$ of $\mathcal{R}$ whose closure intersects two different connected components. Suppose not, then the closure of every connected component of $\mathcal{R}$ intersects at most one of $S_i$. Define \[\mathcal{R}_i:= \underset{{\overline{\mathcal{C}} \cap S_i \neq \emptyset}}{\sqcup} \mathcal{C} \quad \textrm{and} \quad \mathcal{R}_0=\underset{{\partial\mathcal{C} \subset \partial \Phi(M)}}{\sqcup} \mathcal{C}.\]   
    By Theorem~\ref{thm:convexity-regular-values} part (2), $S_i = S_i \cap \Phi(M) = S_i \cap (\cup \overline{\mathcal{C}}) \subset \underset{{\overline{\mathcal{C}} \cap S_i \neq \emptyset}}{\cup} \overline{\mathcal{C}} = \overline{\mathcal{R}}_i$. By Lemma~\ref{lem:regular-values}, $\Phi(M)^\circ = \mathcal{R} \cup (\sqcup_{i=1}^k S_i) =  \sqcup_{i=1}^k (\mathcal{R}_i \cup S_i) \sqcup \mathcal{R}_0$. By assumption, $\mathcal{R}_j \cup S_j$ is disjoint from  $\overline{\mathcal{R}}_i$ for any $i \neq j$, so $\overline{\mathcal{R}}_i \cap \Phi(M)^\circ = \overline{\mathcal{R}}_i  \cap (\mathcal{R}_i \cup S_i) = \mathcal{R}_i \cup S_i$.  
    Hence, $\mathcal{R}_i \cup S_i$ is a closed subset of $\Phi(M)^\circ$ for each $1 \leq i \leq k$. Moreover, by definition $\overline{\mathcal{R}}_0 \cap \Phi(\Sigma_{n-1}) = \emptyset$, so $\mathcal{R}_0 = \overline{\mathcal{R}}_0 \cap \Phi(M)^\circ$ is also closed. Thus, $\Phi(M)^\circ$ can be written as disjoint union of nonempty\footnote{In fact, one can show that $\mathcal{R}_0$ must be empty in this case, but this will not simplify the proof. We leave it for interested readers to check.} closed subsets, which is a contradiction to the fact that $\Phi(M)$ is a polytope.

    Now, without loss of generality, we may assume that there exists a connected component $\mathcal{C}$ of $\mathcal{R}$ such that $\overline{\mathcal{C}} \cap S_{i} \neq \emptyset$ for $i=1,2$. We show that there exist two facets $F,F'$ of $\Phi(M)$ that are also facets of $\overline{\mathcal{C}}$. By Theorem~\ref{thm:convexity-regular-values}, $\overline{\mathcal{C}}$ is a polytope, so $\partial \mathcal{C}$ is connected. By definition, $\partial \mathcal{C} \subset \partial \Phi(M) \cup \Phi(\Sigma_{n-1})$. Since $ \Phi(\Sigma_{n-1})$ is disconnected and $\partial \mathcal{C}$ intersects two of its connected components, 
    $\partial \mathcal{C} \cap \partial \Phi(M) \neq \emptyset$. Fix an $\alpha \in \partial \mathcal{C} \cap \partial \Phi(M)$. Since $\Phi(M)_{\short} = \partial \Phi(M)$, $\Phi^{-1}(\alpha)$ consists of a single orbit $o$. Let $Y= T \times_H \fh^\circ \times \C^{h+1}$ be the local model associated to $o$ with moment map $\Phi_Y([t,\nu,z]) = \frac{1}{2}\sum_{i=0}^h \eta_i |z_i|^2 + \nu$, where $\eta_0,\ldots,\eta_h$ are isotropy weights at any point in the orbit $o$. By Lemma~\ref{lem:local-cone}, there exists an open neighborhood $U$ of $\alpha$ such that $\Phi(M) \cap U = (\alpha+ \sum_{i=0}^h \R_{\geq 0} \eta_i + \fh^\circ) \cap U$. Hence, the facets of $\Phi(M)$ containing $\alpha$ are in one-to-one correspondence with the facets of the cone $\sum_{i=0}^h \R_{\geq 0} \eta_i$.

    On the other hand, by Corollary~\ref{cor:cx-1-rvlnf}, we can further assume that (after possibly relabelling the weights) $\overline{\mathcal{C}} \cap U = (\alpha+\sum_{i=0}^{h-1} \R_{\geq 0} \eta_i + \fh^\circ) \cap (\alpha+\sum_{i=1}^h \R_{\geq 0} \eta_i  + \fh^\circ) \cap U$ and $\xi_0 \xi_h <0$, where $\xi_0,\ldots,\xi_h$ are exponents of the defining monomial of $Y$. Hence, the facets of $\overline{\mathcal{C}}$ containing $\alpha$ are in one-to-one correspondence with the facets of $(\sum_{i=0}^{h-1} \R_{\geq 0} \eta_i) \cap (\sum_{i=1}^h \R_{\geq 0} \eta_i)$. We prove that $\sum_{i=1}^{h-1} \R_{\geq 0} \eta_i$ is a facet of both $(\sum_{i=0}^{h-1} \R_{\geq 0} \eta_i) \cap (\sum_{i=1}^h \R_{\geq 0} \eta_i)$ and $\sum_{i=0}^h \R_{\geq 0} \eta_i$. By Lemma~\ref{lem:maximally-short-weights}, $\mathcal{H} = \sum_{i=1}^{h-1}\R \eta_i$ is a hyperplane in $\fh^*$ and $\eta_0,\eta_h \notin \mathcal{H}$. Let $n$ be a normal vector of $\mathcal{H}$. Then for each $i \in \{1,\ldots,h-1\}$, $\langle \eta_i,n\rangle =0$. Since $\sum \xi_i \eta_i=0$ and $\xi_0 \neq 0$, $\langle \eta_0, n \rangle = -\frac{\xi_h}{\xi_0}\langle \eta_h, n \rangle$. Since $\xi_0 \xi_h <0$,  $\langle \eta_0, n \rangle$ and $\langle \eta_h, n \rangle$ are either both positive or both negative, so $\mathcal{H}$ is a supporting hyperplane of both $(\sum_{i=0}^{h-1} \R_{\geq 0} \eta_i) \cap (\sum_{i=1}^h \R_{\geq 0} \eta_i)$ and $\sum_{i=0}^h \R_{\geq 0} \eta_i$. Hence, there exist a facet $F_1$ of $\Phi(M)$, a facet $F_2$ of $\overline{\mathcal{C}}$, and a hyperplane $\mathcal{H}_\alpha \subset \ft^*$ such that $\mathcal{H}_\alpha \cap \Phi(M) = F_1$ and $\mathcal{H}_\alpha \cap \overline{\mathcal{C}} = F_2$. Since $ \overline{\mathcal{C}} \subset \Phi(M)$, we have $F_2 \subseteq F_1$. Since $F_1 \subset \partial \Phi(M)= \Phi(M)_{\short}$, by Corollary~\ref{cor:short-facet} $F_1=F_2$. 
    Hence, there exists a facet $F$ of $\Phi(M)$ that is also a facet of $\overline{\mathcal{C}}$. By \cite[Corollary 3.4]{sallee1967incidence}, $\partial \overline{\mathcal{C}} \setminus F$ is connected, so $(\partial \overline{\mathcal{C}} \setminus F) \cap \partial \Phi(M) \neq \emptyset$ and we can repeat the same argument to find another facet $F'$ of $\Phi(M)$ that is also a facet of $\overline{\mathcal{C}}$. 

    Let $\mathcal{H}_X(c) = \{\alpha \in \ft^*: \langle \alpha, X \rangle = c\}$ be the supporting hyperplane through $F$ with an inward-pointing normal vector $X$, i.e., $\Phi(M) \subset  \mathcal{H}_X^+(c):=\{\alpha \in \ft^*: \langle \alpha, X \rangle > c\}$. Similarly, let $\mathcal{H}_{X'}(c') = \{\alpha \in \ft^*: \langle \alpha, X' \rangle = c'\}$ be the supporting hyperplane through $F'$ with an inward-pointing normal vector $X'$, i.e. $\Phi(M) \subset  \mathcal{H}_{X'}^+(c'):=\{\alpha \in \ft^*: \langle \alpha, X' \rangle > c'\}$.
    Let $f_{\mathcal{C}}$ be the polynomial such that $f_{\mathcal{C}}|_{\mathcal{C}} = f|_{\mathcal{C}}$. By Lemma~\ref{lem:cx-1-wall-crossing} applied to $F$ and $F'$, there exist $A,A' >0$ such that 
    \[f_{\mathcal{C}} = \frac{\langle \cdot , X \rangle - c}{A} \quad \textrm{ and } \quad f_{\mathcal{C}} = \frac{\langle \cdot , X' \rangle - c'}{A'} .\]
    This implies $X = \frac{A}{A'}X'$ and $c = \frac{A}{A'}c'$, so $\mathcal{H}_X(c) = \mathcal{H}_{X'}(c')$, which is a contradiction.
    Therefore, we conclude that $\Phi(\Sigma_{n-1})$ must be connected.
\end{proof}

\begin{proof}[Proof of Theorem~\ref{thm:skeleton-connected}]
    Without loss of generality, we assume that there exists at least one top-dimensional tall exceptional orbit. Otherwise, this is just a restatement of Theorem~\ref{thm:n-1-skeleton}. Let $p$ be a tall exceptional point whose orbit has the same dimension as $T$. Let $N$ be the connected component of $M^H$ that contains $p$, where $H$ is the stabilizer group of $p$. By~\cite[Corollary 2.9]{Liu}, $N$ consists of exceptional points and $N$ is a connected, compact symplectic toric $(T/H)$-manifold whose Delzant polytope equals $\Phi(N)$. Let $q \in N^T$ be a $T$-fixed point in $N$. Since $N$ is a connected, $\Phi(N)$ is connected, so there exists a path connecting $\Phi(p)$ with $\Phi(q)$. Hence, the moment image of any tall exceptional point whose orbit has the same dimension as $T$ is connected to the moment image of a tall exceptional fixed point. By Theorem~\ref{thm:n-1-skeleton}, the moment images of any two tall exceptional fixed points are connected, so $\Phi(\Sigma)$ is connected.
\end{proof}

In~\cite{LPT}, we proved that the complexity one $T$-action on $(M,\om,\Phi)$ can be extended to a symplectic toric action if and only if the skeleton is $2$-colorable, the genus is $0$, and the painting is trivial. This immediately implies that if the skeleton is not $2$-colorable, then $\Ext(M,\om,\Phi)$ is empty, and if $\Ext(M,\om,\Phi)$ is nonempty, then the skeleton must be $2$-colorable. To understand the set of extensions, we need to understand $2$-colorable skeletons.

\begin{lemma}\label{lem:unique-2-coloring}
    Let $(M,\omega,\Phi)$ be a $2$-colorable complexity one $T$-space with proper moment map. Let $\Sigma$ be its skeleton such that $\Phi(\Sigma)$ is connected. If there exist nonempty clopen subsets $E_1,E_2,E_1',E_2'$ of $\Sigma$ such that $\Sigma = E_1 \sqcup E_2 = E_1'\sqcup E_2'$ and $\Phi$ restricts to an injection on each of $E_1,E_2,E_1',E_2'$, then either $E_1' = E_1$ or $E_1'=E_2$.
\end{lemma}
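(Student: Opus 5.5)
The plan is to consider the four intersections $E_i \cap E_j'$ for $i,j \in \{1,2\}$ and use the connectedness of $\Phi(\Sigma)$ to show that the two decompositions either agree or are swapped. Each $E_i\cap E_j'$ is clopen in $\Sigma$. Set
\[
G := (E_1\cap E_1') \cup (E_2 \cap E_2'), \qquad B := (E_1 \cap E_2') \cup (E_2\cap E_1').
\]
Both are clopen in $\Sigma$ and $\Sigma = G \sqcup B$. It suffices to show that $G=\emptyset$ or $B=\emptyset$. If $B=\emptyset$, then $E_1\subseteq E_1'$ and $E_2\subseteq E_2'$, so $E_1=E_1'$ since both decompositions partition $\Sigma$. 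If $G=\emptyset$, the same argument gives $E_1'=E_2$.

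The key step is to show that $\Phi(G)\cap\Phi(B)=\emptyset$ and that $\Phi(G)$ and $\Phi(B)$ are closed. Compactness or properness gives closedness: $\Sigma$ is closed in $M/T$ (the set of tall exceptional orbits is closed, which I take from the standard facts in \cite{KT01}), and the orbital moment map is proper, so $\Phi$ maps closed subsets of $\Sigma$ to closed sets.

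For disjointness, suppose $\Phi(x)=\Phi(y)$ with $x\in G$ and $y\in B$. Say $x\in E_1\cap E_1'$. If $y\in E_1\cap E_2'$, then $x$ and $y$ both lie in $E_1$, so injectivity on $E_1$ gives $x=y$, contradicting $E_1'\cap E_2'=\emptyset$. If $y \in E_2\cap E_1'$, then both lie in $E_1'$, so $x=y$, contradicting $E_1\cap E_2=\emptyset$. The case $x\in E_2\cap E_2'$ is symmetric.

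Thus $\Phi(\Sigma)=\Phi(G)\sqcup\Phi(B)$ is a disjoint union of two closed sets. By connectedness of $\Phi(\Sigma)$, one of them is empty, and hence $G$ or $B$ is empty.

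The main obstacle is the closedness step. I need that $\Sigma$ is closed in $M/T$, or at least that each clopen piece has closed image. If the skeleton is not known to be closed, I would instead argue via properness of $\Phi$ restricted to $\Sigma$, using the fact that tall exceptional orbits cannot accumulate except onto tall exceptional orbits. This holds because exceptionality is a closed condition on the fibers of the local model (Lemma~\ref{lem:tall/exc-criteria}), and in the tall region limits of tall points are tall. The nonemptiness of the $E$'s is not needed beyond making the statement meaningful.
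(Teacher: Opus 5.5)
Your decomposition into $G=(E_1\cap E_1')\cup(E_2\cap E_2')$ and $B=(E_1\cap E_2')\cup(E_2\cap E_1')$, the disjointness of $\Phi(G)$ and $\Phi(B)$, and the final bookkeeping match the paper's argument and are fine. The gap is in the closedness step: both justifications you offer are false. The skeleton is in general \emph{not} closed in $M/T$, and tall exceptional orbits \emph{can} accumulate onto orbits that are not tall exceptional.

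For instance, let $T^2$ act on $\C^3$ by $(a,b)\cdot z=(az_0,bz_1,abz_2)$, with proper moment map $\Phi(z)=\frac12(|z_0|^2+|z_2|^2,\,|z_1|^2+|z_2|^2)$. The image is the closed quadrant, every boundary value is short, and the defining monomial has exponents $\xi=(1,1,-1)$. By Lemma~\ref{lem:tall/exc-criteria}, the orbits through $(0,0,z_2)$ with $z_2\neq 0$ are tall and exceptional. Yet they converge to the origin, which is a short fixed point. The same thing happens at the fixed points over the vertices of any compact maximally short space of dimension at least six: there the $\xi_k$ are nonzero and of mixed signs, and at least two share a sign. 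That is exactly the setting where the lemma is used. So $\Phi|_\Sigma\colon\Sigma\to\ft^*$ is not proper, and a subset closed in $\Sigma$ need not be closed in $M/T$.

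What is true, and suffices, is that every $x\in\overline{\Sigma}\setminus\Sigma$ is short. In the local model at $x$, nearby points have $\{k: z_k=0\}$ contained in the corresponding set for $x$. So by Lemma~\ref{lem:tall/exc-criteria}(2) a limit of exceptional points is exceptional, and since $x$ lies outside $\Sigma$ it must be short. The fiber over a short value is a single orbit, so $\Phi(x)\notin\Phi(\Sigma)$. Consequently, if $G$ is closed in $\Sigma$, then $\Phi(\overline{G})\cap\Phi(\Sigma)=\Phi(G)$, where $\overline{G}$ is the closure in $M/T$ and $\Phi(\overline{G})$ is closed in $\ft^*$ by properness. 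Hence $\Phi(G)$ and $\Phi(B)$ are closed in $\Phi(\Sigma)$, and your argument goes through. (The paper's proof simply invokes that $\Phi$ is a closed map at this point; the observation above is what that step tacitly relies on.)
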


\begin{proof}
Let $E_{ij} = E_i \cap E_j'$. Then $\Sigma = E_{11} \sqcup E_{12} \sqcup E_{21} \sqcup E_{22}$.
For any orbit $o_1 \in E_{i1}$ and any orbit $o_2 \in E_{i2}$, since $\Phi$ is injective on $E_i$, $\Phi(o_1) \neq \Phi(o_2)$. Hence, $\Phi(E_{i1})$ is disjoint from $\Phi(E_{i2})$. Similarly, $\Phi(E_{1i}) $ is disjoint from $\Phi(E_{2i})$.  Hence, $\Phi(E_{11} \sqcup E_{22})$ is disjoint from $\Phi(E_{12} \sqcup E_{21})$. This implies that $\Phi(\Sigma) = \Phi(E_{11} \sqcup E_{22}) \sqcup  \Phi(E_{12} \sqcup E_{21})$. Since $\Phi$ is proper and $\ft^*$ is Hausdorff, $\Phi$ is a closed map. Hence, both $\Phi(E_{11} \sqcup E_{22}) $ and $ \Phi(E_{12} \sqcup E_{21})$ are closed as subsets of $\Phi(\Sigma)$. Since $\Phi(\Sigma)$ is connected, either $\Phi(E_{11} \sqcup E_{22})$ or $\Phi(E_{12} \sqcup E_{21})$ is empty.

If $\Phi(E_{11} \sqcup E_{22}) = \emptyset$, then $E_{11} = E_{22} = \emptyset$. It follows that $E_1 =E_{11} \sqcup E_{12} =E_{12} = E_1 \cap E_2'$. Hence, $E_1 \subset E_2'$. Similarly, $E_2' \subset E_1$. We conclude that $E_1 = E_2'$. If $\Phi(E_{12} \sqcup E_{21}) = \emptyset$, the same argument will show that $E_1 = E_1'$.
\end{proof}

Now, we are ready to prove the last main result Theorem~\ref{thm:max-short-bound}.

\begin{proof}[Proof of Theorem~\ref{thm:max-short-bound}]
    If $\Ext(M,\om,\Phi)$ is nonempty, then there exists a toric extension, so by~\cite[Lemma 1.2]{LPT} the skeleton is $2$-colorable. By Theorem~\ref{thm:skeleton-connected}, the moment image of the skeleton is connected, so we can apply Lemma~\ref{lem:unique-2-coloring} to conclude that there is a unique way to decompose the skeleton into two non-empty, disjoint clopen subsets so that the orbital moment map restricts to an injection on each of them. By~\cite[Theorem 2.6]{LPT}, for each decomposition, the set of extensions is isomorphic to $\ell \times \Z_2$.
\end{proof}

\bibliographystyle{alpha}
\bibliography{ref}

\end{document}